\def\grad{\nabla}
\def\bo{\mathbf{0}}
\def\cK{\mathcal{K}}
\def\cO{\mathcal{O}}
\def\cX{\mathcal{X}}
\def\cY{\mathcal{Y}}
\def\mE{\mathbb{E}}
\def\eq{\[}
\def\en{\]}
\def\smskip{\smallskip}
\def\texitem#1{\par\smskip\noindent\hangindent 25pt
               \hbox to 25pt {\hss #1 ~}\ignorespaces}
\def\abs#1{\left|#1\right|}
\def\norm#1{\|#1\|}
\newcommand{\BEAS}{\begin{eqnarray*}}
\newcommand{\EEAS}{\end{eqnarray*}}
\newcommand{\BEA}{\begin{eqnarray}}
\newcommand{\EEA}{\end{eqnarray}}
\newcommand{\BEQ}{\begin{eqnarray}}
\newcommand{\EEQ}{\end{eqnarray}}
\newcommand{\BIT}{\begin{itemize}}
\newcommand{\EIT}{\end{itemize}}
\newcommand{\BNUM}{\begin{enumerate}}
\newcommand{\ENUM}{\end{enumerate}}
\newcommand{\BA}{\begin{array}}
\newcommand{\EA}{\end{array}}
\newcommand{\ones}{\mathbf 1}
\newcommand{\reals}{\mathbb{R}}
\newcommand{\integers}{\mathbb{Z}}
\newcommand{\Rank}{\mathop{\bf rank}}
\newcommand{\argmin}{\mathop{\rm argmin}}
\newcommand{\argmax}{\mathop{\rm argmax}}
\newif\ifpagenumbering
\newsavebox{\theorembox}
\newsavebox{\lemmabox}
\newsavebox{\remarkbox}
\savebox{\theorembox}{\noindent\bf Theorem}
\savebox{\lemmabox}{\noindent\bf Lemma}
\savebox{\remarkbox}{\noindent\bf Remark}
\def\fprod#1{\langle#1 \rangle}
\def\proj#1{\pi_\Omega\left(#1\right)}
\def\projc#1{\pi_{\Omega^c}\left(#1\right)}
\newcommand{\sgn}{\mathrm{sgn}}
\newcommand{\Diag}{\mathop{\bf Diag}}
\newcommand\redsout{\bgroup\markoverwith{\textcolor{red}{\rule[1ex]{2pt}{0.8pt}}}\ULon}
\newcommand{\admip}{\texttt{ADMIP}}
\newcommand{\admm}{\texttt{ADMM}}
\newcommand{\asalm}{\texttt{ASALM}}
\newcommand{\thickhline}{%
    \noalign {\ifnum 0=`}\fi \hrule height 1pt
    \futurelet \reserved@a \@xhline
}
\title{An Alternating Direction Method with Increasing Penalty for Stable Principal Component Pursuit}%\footnotemark[4]}
\author{
    N. S. Aybat \footnotemark[2]
    \and
    G. Iyengar \footnotemark[3]\ %\footnotemark[4]
}
\begin{document}
\maketitle
\renewcommand{\thefootnote}{\fnsymbol{footnote}}
\footnotetext[2]{IE Department, The Pennsylvania State University. Email: {\tt nsa10@psu.edu}. Supported by NSF grant CMMI-1400217.}
\footnotetext[3]{IEOR Department, Columbia University. Email: {\tt gi10@columbia.edu}. Supported by NIH R21 AA021909-01, NSF CMMI-1235023, NSF DMS-1016571 grants.}
%\footnotetext[4]{Research partially supported by ONR grant N000140310514, NSF Grant DMS 10-16571 and DOE Grant DE-FG02-08-25856.}
\renewcommand{\thefootnote}{\arabic{footnote}}
\begin{abstract}
 The stable principal component pursuit~(SPCP) is a non-smooth
 convex optimization problem, the solution of which
 enables one to reliably recover the low rank and sparse components of a
 data matrix which is corrupted by a dense noise matrix, % The recovery
   % properties of SPCP extends to the case where
   even when
   only a fraction of data
   entries are observable. %, both in theory and
 % in practice.
 In this paper, we propose %a new first order non-smooth adaptive
 % augmented Lagrangian algorithm NSA to solve
 % the SPCP problem.
 a new algorithm for solving SPCP. The proposed algorithm is a modification of the
   alternating direction method of multipliers~(\admm) where we use an increasing sequence of
   penalty parameters instead of a fixed  penalty.
% to solve the SPCP
%    problem. Proposed ADMM
   The algorithm is based on partial variable
   splitting and works directly with the non-smooth objective function. We
   show that both primal and dual iterate sequences converge under mild
   conditions on the % increasing
   sequence of penalty parameters.
 % NSA is an alternating direction method, with partial variable splitting,
 % customized for the SPCP problem.  \ADMIP~ works directly with the
 % non-smooth objective function and converges under mild conditions on the
 % sequence of penalty multipliers, in particular, the penalty multipliers
 % are allowed to grow unbounded.
 To the best of our knowledge, % that this is the
   % first result
   % of its kind in the literature
   this is the first convergence result for a
   variable penalty \admm~when penalties are not bounded, the
   objective function is non-smooth and its sub-differential is not
   uniformly bounded. Using partial variable
splitting and adopting an increasing sequence of penalty multipliers,
together,   %grow unbounded
significantly reduce the number of iterations required to achieve feasibility in practice.
Our preliminary computational tests show that the proposed algorithm
  works very well in practice, and
  outperforms \asalm, a state of the art \admm~algorithm for the SPCP
  problem with a constant penalty parameter.  %To best of our knowledge, an algorithm for the SPCP problem that has $\cO(1/\epsilon)$ iteration complexity and has a per iteration complexity equal to that of a singular value decomposition is given for the first time.
\end{abstract}
%\begin{acknowledgement}
%We would like to thank to Min Tao for providing the code \proc{ASALM}.
%\end{acknowledgement}
\section{Introduction}
Suppose a matrix $D\in\reals^{m\times n}$ is of the form  $D = L^0 + S^0$,
where $L^0$ is a low-rank matrix, i.e. $\Rank(L^0)\ll\min\{m,n\}$, and
$S^0$ is a sparse matrix. The matrix $S^0$ is interpreted as
gross errors in the measurement of the low rank matrix $L^0$. Wright et al.~\cite{Wright09_1J},
 Cand{\'e}s et al.~\cite{Can09_1J} and Chandrasekaran et al.~\cite{Chandrasekaran-Sanghavi-Parrilo-Willsky-2009} proposed recovering the
low-rank $L^0$ and sparse $S^0$ by solving the \emph{principal
  component pursuit}~(PCP) problem
\vspace{-3mm}
\begin{align}
\min_{L\in\reals^{m\times n}} \norm{L}_* + \xi~\norm{D-L}_1, \label{eq:component_pursuit}
\end{align}
where $\xi=\frac{1}{\sqrt{\max\{m,n\}}}$. Here the nuclear norm $\norm{L}_* := \sum_{i = 1}^{r} \sigma_i(L)$, where $\{\sigma_i(L)\}_{i=1}^{r}$ denotes the singular values of $L \in \reals^{m\times n}$, and the $\ell_1$-norm
  $\norm{L}_1:=\sum_{i=1}^m\sum_{j=1}^n|L_{ij}|$.
%  $\norm{L}_\infty:=\max\{|L_{ij}|:~1\leq i\leq m,~ 1\leq j\leq n\}$ and $\norm{L}_2:=\sigma_{\rm max}(L)$, where $\sigma_{\rm max}(L)$ denotes the maximum singular value of $L$.
% %$\norm{L}_1$ is equal to the sum of absolute values of the elements of $L$
% To be more precise about the recovery, let $L^0\in\reals^{m\times n}$ with $\Rank(L^0)=r$ and let Suppose that for some $\mu>0$, $U$ and $V$ satisfy
% \begin{align}
% \label{eq:assumption}
% \max_i\norm{U^Te_i}_2^2\leq \frac{\mu r}{m}, \quad \max_i\norm{V^Te_i}_2^2\leq \frac{\mu r}{n}, \quad \norm{UV^T}_\infty\leq \sqrt{\frac{\mu r}{mn}},
% \end{align}
% where $e_i$ denotes the $i$-th unit vector.
\begin{theorem}~\cite{Can09_1J}
Suppose $D=L^0+S^0 \in \reals^{m \times n}$. Let $r = \Rank(L^0)$ and
$L^0=U\Sigma V^T=\sum_{i=1}^r\sigma_iu_iv_i^T$ denote the singular
value decomposition~(SVD) of $L^0$. Suppose there exists $\mu>0$ such
that
\begin{align}
\label{eq:assumption}
\max_i\norm{U^Te_i}_2^2\leq \frac{\mu r}{m}, \quad
\max_i\norm{V^Te_i}_2^2\leq \frac{\mu r}{n}, \quad
\norm{UV^T}_\infty\leq \sqrt{\frac{\mu r}{mn}},
\end{align}
where $e_i$ denotes the $i$-th unit vector, and
 % where $L^0\in\reals^{m\times n}$ with $m<n$ satisfies \eqref{eq:assumption} for some $\mu>0$,
the non-zero components of the sparse matrix $S^0$ are chosen
uniformly at random. Then there exist constants $c$, $\rho_r$,  and $\rho_s$,
such that the solution of the  PCP
problem~\eqref{eq:component_pursuit} exactly recovers $L^0$ and $S^0$
with probability of at least $1-c n^{-10}$, provided
\begin{align}
\label{eq:assumption2}
\Rank(L^0)\leq\rho_r m \mu^{-1} (\log(n))^{-2} \quad \mbox{and} \quad
\norm{S^0}_0 \leq \rho_s mn,
\end{align}
where the $\ell_0$-norm $\norm{S^0}_0$ denotes the number of non-zero components
of the matrix $S^0$.
\end{theorem}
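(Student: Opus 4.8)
The plan is to reduce the exact recovery claim to the construction of a suitable dual certificate, and then to build that certificate using the randomness in the support of $S^0$ together with the incoherence hypotheses~\eqref{eq:assumption}. Writing $S = D - L$, problem~\eqref{eq:component_pursuit} is $\min_{L,S}\ \norm{L}_* + \xi\norm{S}_1$ subject to $L + S = D$. Let $T := \{UX^T + YV^T : X \in \reals^{n\times r},\ Y\in\reals^{m\times r}\}$ be the tangent space at $L^0$ to the manifold of rank-$r$ matrices, let $P_T, P_{T^\perp}$ be the orthogonal projections onto $T$ and its complement, and let $\Omega := \mathrm{supp}(S^0)$ with $P_\Omega, P_{\Omega^c}$ the associated coordinate projections. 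The first step is to record the subdifferentials $\partial\norm{L^0}_* = \{UV^T + W : P_T W = 0,\ \norm{W}\le 1\}$ and $\partial\norm{S^0}_1 = \{\sgn(S^0) + F : P_\Omega F = 0,\ \norm{F}_\infty \le 1\}$, where $\norm{\cdot}$ denotes the spectral norm. Since the two blocks share the single equality $L+S=D$, the KKT conditions show that $(L^0,S^0)$ is optimal precisely when some matrix $\Lambda$ lies in $\partial\norm{L^0}_*$ and in $\xi\,\partial\norm{S^0}_1$ simultaneously; equivalently $P_T\Lambda = UV^T$, $P_\Omega\Lambda = \xi\sgn(S^0)$, $\norm{P_{T^\perp}\Lambda}\le 1$, and $\norm{P_{\Omega^c}\Lambda}_\infty \le \xi$.

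Second, I would establish a geometric injectivity statement that yields uniqueness. The key quantitative input is that $\norm{P_T P_\Omega P_T}$ is bounded away from $1$ with high probability: since each entry enters $\Omega$ independently with probability at most $\rho_s$ and the incoherence~\eqref{eq:assumption} forces every fiber of $T$ to have small overlap with any single coordinate, a matrix Bernstein inequality yields $\norm{P_\Omega P_T - \rho_s P_T}$ small. Consequently $P_T P_{\Omega^c} P_T = P_T - P_T P_\Omega P_T$ is bounded below on $T$, so $T$ meets the coordinate space supported on $\Omega$ only at $0$, no objective-preserving perturbation survives, and $P_T P_{\Omega^c} P_T$ can be inverted on $T$.

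Third, and this is the heart of the argument, I would construct $\Lambda$. No single closed form satisfies all four conditions at once, so I would instead build an inexact dual certificate by the golfing scheme: partition the off-support entries into independent batches and set $\Lambda = \sum_k Y_k$, where each $Y_k$ is supported on one batch and chosen so that the tangent residual $UV^T - P_T\!\big(\sum_{j\le k} Y_j\big)$ contracts geometrically in Frobenius norm. Bounding the accumulated error then gives $P_T\Lambda \approx UV^T$ with $\norm{P_{T^\perp}\Lambda} < 1$, while a least-squares correction on $\Omega$ enforces $P_\Omega\Lambda = \xi\sgn(S^0)$ and keeps $\norm{P_{\Omega^c}\Lambda}_\infty < \xi$. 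Each estimate reduces to controlling spectral and $\ell_\infty$ norms of sums of independent random matrices via matrix Bernstein and scalar concentration, with the incoherence parameters entering through per-entry variance bounds.

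The main obstacle is this certificate construction, because the four conditions pull against one another: exactness on $T$ and on $\Omega$ versus strict slack on $T^\perp$ and on $\Omega^c$. Balancing them forces the delicate logarithmic dependence in~\eqref{eq:assumption2}. The contraction rate of the golfing scheme, the number of batches, and the per-step failure probability must be tuned jointly so that the cumulative spectral error stays below $1$ while the total failure probability is driven down to $c\,n^{-10}$; aligning the constants $\rho_r$, $\rho_s$, and the $(\log n)^{-2}$ factor is exactly where the technical effort concentrates.
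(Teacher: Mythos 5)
This theorem is quoted from~\cite{Can09_1J} as background; the paper you are comparing against gives no proof of it, so there is nothing internal to match your argument to. Your sketch faithfully reproduces the strategy of the cited source: reduction to a dual certificate $\Lambda$ with $P_T\Lambda \approx UV^T$, $P_\Omega\Lambda = \xi\,\sgn(S^0)$, $\norm{P_{T^\perp}\Lambda}<1$, $\norm{P_{\Omega^c}\Lambda}_\infty<\xi$, the transversality bound on $\norm{P_\Omega P_T}$ via matrix Bernstein, and the golfing-scheme construction with a least-squares correction on $\Omega$. The only substantive omissions relative to the full argument in~\cite{Can09_1J} are the relaxation to an \emph{inexact} certificate (exact equality on $T$ and $\Omega$ is not achievable and must be replaced by a small Frobenius-norm residual, with a separate lemma showing this still certifies optimality) and the derandomization step handling the signs of $S^0$; as a proof outline, however, your proposal is the standard and correct route.
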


Now, suppose the data matrix $D$ is of the form $D = L^0 + S^0 + N^0$ such that $L^0$ is a low-rank matrix, $S^0$ is a sparse gross ``error'' matrix, $N^0$ is a dense noise matrix with
$\norm{N^0}_F\leq\delta$, where the Frobenius norm $\norm{Z}_F := \sqrt{
\sum_{i=1}^m\sum_{j=1}^n Z_{ij}^2}$.
In \cite{Can10_1J}, it was shown that it was still possible to recover
the low-rank and sparse components $(L^0,S^0)$ of $D$ by solving
the \emph{stable principal component pursuit}~(SPCP) problem
%Suppose that the data matrix $D$ is of the form $D = L^0 + S^0 +
%N_0$, where where $L^0$ is a low-rank matrix, $S^0$ is a sparse
%matrix as given in the \emph{principal component pursuit} problem and
%$N_0$ is such that $\norm{N_0}_F\leq\delta$. Then solving the
%\emph{stable principal component pursuit} problem
\begin{align}
 \min_{L,S\in\reals^{m\times n}}\{\norm{L}_*+\xi~\norm{S}_1:\
 \norm{L+S-D}_F\leq\delta\}. \label{eq:stable_component_pursuit}
\end{align}
\begin{theorem}~\cite{Can10_1J}
\label{thm:candes2}
Suppose $D = L^0 + S^0 + N^0$, where $L^0\in\reals^{m\times n}$
with $m<n$ satisfies \eqref{eq:assumption} for some $\mu>0$, and the
non-zero components of the sparse matrix $S^0$ are chosen
uniformly at random. Suppose $L^0$ and $S^0$ satisfy
\eqref{eq:assumption2}. Then for any $N^0$ such that
$\norm{N^0}_F\leq\delta$, the solution $(L^*,S^*)$ to the % stable
% principal component pursuit
SPCP
problem~\eqref{eq:stable_component_pursuit} satisfies
$\norm{L^*-L^0}_F^2+\norm{S^*-S^0}_F^2\leq Cmn\delta^2$ for some
constant $C$ with high probability.
\end{theorem}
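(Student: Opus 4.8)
The plan is to bootstrap the exact-recovery guarantee of Theorem 1: the same dual certificate that certifies exact recovery in the noiseless regime also yields the stability estimate, at the cost of an $O(\delta)$ slack. Throughout write $H_L := L^*-L^0$ and $H_S := S^*-S^0$, let $T$ be the tangent space to the rank-$r$ matrices at $L^0$ (so that $\partial\norm{\cdot}_*$ at $L^0$ equals $\{UV^T+W:\ P_TW=0,\ \norm{W}\le 1\}$ with $\norm{\cdot}$ the spectral norm), let $\Omega=\mathrm{supp}(S^0)$, and let $P_T,P_{T^\perp},\proj{\cdot},\projc{\cdot}$ denote the associated orthogonal projections.

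First I would record two elementary consequences of feasibility and optimality. Since $L^0+S^0-D=-N^0$, the pair $(L^0,S^0)$ is feasible, and because $\norm{L^*+S^*-D}_F\le\delta$ we obtain the \emph{sum bound} $\norm{H_L+H_S}_F\le \norm{N^0}_F+\delta\le 2\delta$. Optimality gives $\norm{L^*}_*+\xi\norm{S^*}_1\le\norm{L^0}_*+\xi\norm{S^0}_1$, which, combined with the subgradient inequality for the convex objective and the subgradient choices $UV^T+W$ and $\xi(\sgn(S^0)+F)$ aligned with $P_{T^\perp}H_L$ and $\projc{H_S}$, yields
\begin{align*}
\fprod{UV^T,H_L}+\norm{P_{T^\perp}H_L}_*+\xi\fprod{\sgn(S^0),H_S}+\xi\norm{\projc{H_S}}_1\le 0.
\end{align*}

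The heart of the argument is to absorb the two inner-product terms using the (approximate) dual certificate $Y$ supplied by the proof of Theorem 1, which satisfies $Y=UV^T+W_0$ with $P_TW_0=0$, $\norm{W_0}\le\beta$, and simultaneously $Y=\xi\,\sgn(S^0)+\xi F_0$ with $\projc{F_0}=F_0$, $\norm{F_0}_\infty\le\beta$, for some $\beta<1$. Expanding $\fprod{Y,H_L+H_S}$ through both representations and substituting the displayed inequality, the cross terms contract by $\norm{W_0}\le\beta$ and $\norm{F_0}_\infty\le\beta$ (via spectral/nuclear and $\ell_\infty/\ell_1$ duality), leaving
\begin{align*}
(1-\beta)\big(\norm{P_{T^\perp}H_L}_*+\xi\norm{\projc{H_S}}_1\big)\le -\fprod{Y,H_L+H_S}\le 2\delta\,\norm{Y}_F,
\end{align*}
by Cauchy--Schwarz and the sum bound. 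Since $\norm{\cdot}_F\le\norm{\cdot}_*$ and $\norm{\cdot}_F\le\norm{\cdot}_1$, this already controls the components of the error \emph{off} the tangent space and \emph{off} the support; the factor $\xi^{-2}=\max\{m,n\}$ together with the bound $\norm{Y}_F^2=O(\min\{m,n\})$ coming from $Y=UV^T+W_0$ is what accounts for the $mn$ growth of the final constant after squaring.

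It remains to bound the \emph{on-support} parts $P_TH_L$ and $\proj{H_S}$, and here I expect the main obstacle. Writing $H_L+H_S=(P_TH_L+\proj{H_S})+(P_{T^\perp}H_L+\projc{H_S})$ and combining the sum bound with the off-part estimates gives $\norm{P_TH_L+\proj{H_S}}_F=O(\delta\sqrt{mn})$; but splitting this into individual bounds requires the transversality of $T$ and $\Omega$, namely $\norm{P_T\,\pi_\Omega}\le c<1$, which is precisely the nontrivial geometric fact established probabilistically in the proof of Theorem 1 under the incoherence conditions~\eqref{eq:assumption} and the random-support model. Granting it, for $A\in T$ and $B$ supported on $\Omega$ one has $\fprod{A,B}\ge -c\,\norm{A}_F\norm{B}_F$, whence $\norm{A+B}_F^2\ge(1-c)(\norm{A}_F^2+\norm{B}_F^2)$; applying this to $A=P_TH_L$ and $B=\proj{H_S}$ bounds $\norm{P_TH_L}_F^2+\norm{\proj{H_S}}_F^2$ by the same order. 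Adding the four squared Frobenius bounds and tracking the $\xi$-dependence then delivers $\norm{H_L}_F^2+\norm{H_S}_F^2\le Cmn\delta^2$. The ``with high probability'' qualifier is inherited verbatim from Theorem 1, since it is exactly the events that such a certificate $Y$ exists with $\beta<1$ and that $\norm{P_T\,\pi_\Omega}<1$ that hold with probability at least $1-cn^{-10}$.
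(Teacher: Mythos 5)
The paper does not prove this statement: Theorem~\ref{thm:candes2} is quoted from \cite{Can10_1J} purely to motivate the SPCP formulation, so there is no in-paper proof to compare yours against. On its own merits, your sketch is a faithful reconstruction of the argument in the cited source: the feasibility/triangle-inequality bound $\norm{H_L+H_S}_F\le 2\delta$, the subgradient inequality with the aligned choices of $W$ and $F$, the absorption of the cross terms via the dual certificate from the exact-recovery proof, and the use of the transversality bound $\norm{P_T\pi_\Omega}\le c<1$ to split the on-support part $P_TH_L+\pi_\Omega(H_S)$ into its two pieces are exactly the steps of that proof, and the bookkeeping of $\xi^{-2}=\max\{m,n\}$ against $\norm{Y}_F^2=O(\min\{m,n\})$ correctly produces the $mn$ in the constant. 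The one place where the sketch is materially looser than the real argument is the certificate itself: the $Y$ produced by the golfing scheme satisfies the two representations only \emph{approximately} (e.g.\ $\norm{P_T(Y-UV^T)}_F$ small rather than zero, and $\norm{\pi_{\Omega^c}(Y)}_\infty\le\xi/2$ rather than an exact identity $Y=\xi\,\sgn(S^0)+\xi F_0$), so the contraction step acquires additional error terms of the form $\norm{P_T(Y-UV^T)}_F\cdot\norm{P_TH_L}_F$ that must be carried along and absorbed at the end; since $\norm{P_TH_L}_F$ is only controlled in the final step, this forces the actual proof to close a small bootstrapping loop that your linear presentation elides. That is a matter of added care rather than a wrong idea, and the probabilistic inputs (existence of the certificate with $\beta<1$, and the transversality of $T$ and the support) are correctly identified as the events inherited from the proof of exact recovery.
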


In many applications, some of the entries of $D$ in
\eqref{eq:stable_component_pursuit} may not be available. Let
$\Omega\subset\{i:1\leq i\leq m\}\times\{j:1\leq j\leq n\}$ be the
index set of the observable entries of $D$. Define the
projection operator $\pi_{\Omega}:\reals^{m\times n}\rightarrow
\reals^{m\times n}$ as follows
\vspace{-5mm}
\begin{equation}
  \label{eq:pi-def}
  (\pi_\Omega(L))_{ij} =  \left\{
    \begin{array}{ll}
      L_{ij} , & (i,j)\in\Omega,\\
      0, & \mbox{otherwise}.
    \end{array}
  \right.
\end{equation}
\vspace{-4mm}

\noindent Note that the adjoint operator  $\pi^*_{\Omega} =\pi_{\Omega}$.
For applications with missing observations, Tao and Yuan~\cite{Tao09_1J} proposed recovering the low rank and
sparse components of $D$ by
solving
\vspace{-2mm}
\begin{equation}
\label{prob:SPCP-missing}
\quad \min_{L,S\in\reals^{m\times n}}\{\|L\|_*+\xi\|S\|_1:\ \|\pi_\Omega(L+S-D)\|_F\leq\delta\}.
\end{equation}
\vspace{-5mm}

\noindent PCP and SPCP both have numerous applications in diverse fields such as video surveillance and face recognition in image processing~\cite{Can09_1J}, and clustering in machine learning~\cite{Aybat15_1P} to name a few. \eqref{eq:component_pursuit},
\eqref{eq:stable_component_pursuit} and \eqref{prob:SPCP-missing} can
be reformulated as semidefinite programming~(SDP) problems, and
therefore, in theory they can be solved in polynomial time
using interior point algorithms; however, these algorithms require very large amount of memory, and  are, therefore, impractical for solving large instances.
%    due to their memory intensive computational requirements}.
Recently, a number of first-order algorithms have been proposed to
solve PCP and SPCP. For existing approaches
to solve PCP and SPCP problems see
~\cite{Aybat13_1j,Ser10_1J,Can09_1J,Gold10_1J,Ma09_1J,Ma09_1R,Tao09_1J,Wright09_1J,Can10_1J} and
references therein.
\subsection*{Our contribution}
  We propose a new \emph{alternating direction method of
  multipliers}~(\admm) with an \emph{increasing penalty}
  sequence called \admip\footnote{In an earlier preprint, we named it as \texttt{NSA} algorithm.}~to solve the SPCP
  problem~\eqref{prob:SPCP-missing}.
  % -\admip~stands for \emph{alternating direction method} with
  % \emph{increasing penalty}
  The \admip\ algorithm, detailed in Figure~\ref{alg:nsa},
  uses \emph{partial} variable splitting on \eqref{prob:SPCP-missing}, and works
  directly with the \emph{non-smooth} objective function. In the context of
  \emph{method of multipliers}, where the primal iterates are computed by minimizing the augmented Lagrangian function, under assumptions related to strong second-order conditions for optimality, it was
  shown in~\cite{Rockafellar1976,Rockafellar-76} that the primal and dual
  iterates  converge to an optimal pair \emph{superlinearly}
  when the penalty parameters $\rho_k\nearrow\infty$, while the rate is
  only \emph{linear} when $\sup_k \rho_k<\infty$. However, this result
  has not been extended to % the context of
  \admm. In a recent survey, % paper on \admm,}
  Boyd et al.~\cite{Boyd11_1J} (see  Section 3.4.1)
  % the penalty multipliers in ADMM are held constant and
  % the authors
  remark % in Section 3.4.1
  that it is difficult to prove the convergence of
  \admm~when penalty multipliers change in every iteration.
  We show that both
  primal and dual \admip\ iterates converge to an
  optimal primal-dual solution for \eqref{prob:SPCP-missing} under mild
  conditions on the penalty multiplier sequence. To the best of our knowledge, % that this is the
    % first result
    % of its kind in the literature
    this is the first convergence result for a
    variable penalty \admm~when penalties are \emph{not bounded}, the
    objective function is \emph{non-smooth} and its subdifferential is
    \emph{not}
    uniformly bounded.

    The work of He et al.~\cite{He98_1J,He00_1J,He02_1J} on variable
      penalty \admm~algorithms implicitly assumes that
  % Even
  % in the case where the multipliers are allowed to change in every
  % iteration, either
  both terms in the
  objective function are % implicitly assumed to be
  \emph{differentiable}; %  -indeed, the ADMM algorithm proposed
  % in~\cite{He98_1J} is for linearly constrained variational inequalities
  % with a single-valued continuous self-map
  % $F:\reals^n\rightarrow\reals^n$;
  therefore, these results do not extend to
  \emph{non-smooth} optimization problem in
  \eqref{eq:problem_generic_split}, i.e. to the \admm~formulation of
  \eqref{prob:SPCP-missing}. The
     variable penalty \admm~algorithms in~\cite{He98_1J,He00_1J,He02_1J}
    % are proposed for solving
     are proposed to solve variational inequalities~(VI) of the form:\vspace{-2mm}
  \begin{equation*}
  (x-x^*)^\top F(x^*)+(y-y^*)^\top G(y^*) \geq 0,\quad \forall
  (x,y)\in\Omega:=\{(x,y):~x\in\cX,~y\in\cY,~Ax+By=b\},%\subset\reals^{n_1+n_2},
  \end{equation*}
  \vspace{-5mm}
  
  \noindent where $A\in\reals^{m\times n_1}$, $B\in\reals^{m\times n_2}$, and
  $b\in\reals^m$. The convergence proofs in~\cite{He98_1J,He00_1J,He02_1J}
  require that both $F:\cX\rightarrow\reals^{n_1}$ and
  $G:\cY\rightarrow\reals^{n_2}$ are \emph{continuous point-to-point maps}
  that are monotone with respect to the non-empty closed convex sets
  $\cX\subset\reals^{n_1}$ and $\cY\subset\reals^{n_2}$, respectively. % In
  % particular, when
  When these variable penalty \admm~methods for VI are applied to the VI
  reformulation of
  convex optimization problems of the form $\min\{f(x)+g(y):\
  (x,y)\in\Omega\}$, the requirement that $F$ and $G$ be continuous
  point-to-point maps
  % requirement in
  % the convergence proofs of~\cite{He98_1J,He00_1J,He02_1J} is equivalent
  % to requiring
  implies that $F(x)=\grad f(x)$, and $G(y)=\grad g(y)$. % However, in the
  % problem we consider both terms in the objective function
  On the other hand, if $f(x)$ and $g(x)$ are non-smooth
  convex functions, % . Therefore, if we write the problem as a variational
  % inequality, we need to use point-to-set maps, i.e.
  then both $F$ and $G$ should be \emph{point-to-set maps}, i.e.,
  \emph{multi-functions}; %. Hence,
  therefore, the convergence proofs for variable penalty \admm~ algorithms
  % \emph{variable penalty}
  in~\cite{He98_1J,He00_1J,He02_1J} do not % hold
  % for
  extend to our problem which is a non-smooth convex optimization problem
  -- see Assumption~A and the following discussion on page 107 in
  \cite{He02_1J}. The \admm~algorithm  in~\cite{Kont98_1J} can solve
    $\min\{f(x)+g(y):\
  (x,y)\in\Omega\}$ when both $f$ and $g$ are non-smooth convex functions;
  however, the convergence proof requires that
  the penalty sequence $\{\rho_k\}$ increases
  only \emph{finitely} many times; i.e., $\{\rho_k\}$ is \emph{bounded above}
  (\cite{He00_1J,He02_1J} also assume bounded $\{\rho_k\}$). Recently,
  Lin et al.~\cite{Ma09_1J} have proposed an \admm~algorithm %with an increasing penalty sequence
  % is proposed
  for solving PCP problem in \eqref{eq:component_pursuit},
  i.e. \eqref{prob:SPCP-missing} with $\delta=0$, and show that the
  algorithm converges for a \emph{nondecreasing} $\{\rho_k\}$ such that
  $\sum_{k=1}^\infty\rho_k^{-1}=\infty$. The analysis in~\cite{Ma09_1J} % is
  % based on the fact that the
  relies on the fact that the  subdifferentials of any norm are
  \emph{uniformly bounded}. When $\delta>0$ in
  \eqref{prob:SPCP-missing}, the
  results in~\cite{Ma09_1J} do not hold because the subdifferentials of
  the objective function  in the \admm~formulation
  \eqref{eq:problem_generic_split} are no longer uniformly bounded because
  of % the we do not have \emph{uniformly bounded}
  % subdifferentials in the \admm~formulation
  % \eqref{eq:problem_generic_split} due to
  the indicator function used to model the constraint.

  % Indeed, this fact is pointed out on page 107
  % in~\cite{He02_1J}: for the analysis in their paper to hold, the authors
  % emphasized that the functions $F$ and $G$ must be continuous (and hence
  % single-valued). Therefore, their approach is different from
  % monotonicity-based analyses like \cite{EcksteinB92,Kont98_1J} that allow
  % for set-valued monotone $F$ and $G$, and provide stronger convergence
  % guarantees.

% We believe
%   that this is the first result of its kind in
% the literature on the convergence of variable penalty \admm~when
% $\rho_k\nearrow\infty$ and the sub-differential of the non-smooth
% objective function is not uniformly bounded, as is the case for the
% equivalent formulation \eqref{eq:problem_generic_split} that we solved
% using \admip.}
% \eqref{eq:problem_generic_split}.
% -\admip~solves
% \eqref{eq:problem_generic_split} that is equivalent to
% \eqref{prob:SPCP-missing}.}

%Although at first  NSA displayed in Figure~\ref{alg:nsa} may appear to be simply an application of the alternating direction method of multipliers~(ADMM) to the SPCP problem. This, however, is not true.
  In \admm~algorithms~\cite{Boyd11_1J,Eckstein12,EcksteinB92}, the penalty
  parameter % $\rho_k$
  is typically held constant, i.e. $\rho_k = \rho>0$, for all $k\geq 1$.
  %This is motivated by the result
  Although convergence is guaranteed for all $\rho>0$, the empirical
    performance of \admm~algorithms is critically   dependent on the choice of
    penalty parameter $\rho$ -- it deteriorates very rapidly if the penalty
    is set too large or too
    small~\cite{Fukushima92_1J,Glowinski00_1B,Kont98_1J}. Moreover, it is discussed in~~\cite{Lions-Mercier-79} that there exists
    %an optimal
    % multiplier value
    $\rho^\ast$ which optimizes the convergence
    rate for the constant penalty \admm~scheme; %  that there exists
    % $\rho^\ast>0$ which optimizes the speed of convergence bounds.
    however, estimating $\rho^\ast$ is difficult in
    practice~\cite{He00_1J}.
   % , the number of
  % iterations required for Furthermore, it
  % is discussed
  % in~\cite. Indeed, empirical results
  % from applications~
  %  have
  % shown that when $\rho_k=\rho$ is chosen inappropriately, too large or too
  % small, the number of iterations required can increase dramatically.
  % Boyd et al.~\cite{Boyd11_1J} refer the reader to the work of Rockafellar
  % \cite{Rock76_1J} for the case where the multipliers change in every
  % iteration; however, in \cite{Rock76_1J} the sequence multipliers is
  % $\rho_k = 1/c_k$, for $c_k>c>0$ for all $k>1$, i.e. the sequence is
  % bounded above by a fixed constant $1/c$.  He and Yang~\cite{He98_1J}
  % and  Kontogiorgis and Meyer~\cite{Kont98_1J}
  % also show convergence of ADMM assuming a bounded multiplier sequence.
  % In our paper, we show that both primal and dual iterate sequences
  % computed by \admip~converge when $\{\rho_k\}_{k\in\integers_+}$ is
  %   monotonically increasing and unbounded.
  %   In this paper, we show that \admip~converges for an unbounded
  % sequence of penalty multipliers.

The main advantages of adopting an increasing sequence of penalties are as follows:
  \begin{enumerate}[(i)]
  \item The algorithm is robust in the sense that there is no need to
    search for an optimal $\rho^*$.
  \item The algorithm is likely to achieve primal feasibility
    faster. \admm~algorithms can
    be viewed as inexact variant of augmented Lagrangian algorithms where
    one updates the dual iterate after all primal iterates are updated by
    taking a single block-coordinate descent step in each block. The
    primal infeasibility in
    augmented Lagrangian methods can be % given by
% For the augmented
%     Lagrangian methods, the
%   primal infeasibility at the $k$-th iteration is
  approximated by
  $\cO\left(\rho_k^{-1}\norm{Y_k-Y^*}\right)$, where $Y_k$ is an
  estimate of optimal dual $Y^*$ at the $k$-th iteration (see, e.g. Section 17.3
  in~\cite{Nocedal-Wright-99}). %  This approximate bound suggests that
  % convergence rate can be increased by carefully selecting an increasing
  % sequence of penalties.
  Consequently, a suitably chosen increasing sequence of penalties can
  improve the convergence rate.
  \item The complexity of initial (transient)
    iterations can be controlled through controlling the growth in $\{\rho_k\}$. % Starting from a small $\rho_0$ can significantly reduce the
    % computational time in the initial iterations.
  % choosing an increasing sequence:
    The main computational bottleneck in \admip\
    (see Figure~\ref{alg:nsa}) is Step~\ref{algeq:subproblem1}
    that requires an SVD computation
    % of is the
    % bottleneck operation that can
    % be computed as in
    % Computing $L_{k+1}$ requires one to compute an
    % SVD
    (see \eqref{eq:subproblem_L}). Since the optimal $L^*$ is of low-rank,
    and $L_k\rightarrow L^*$, eventually the SVD computations are likely to
    be very efficient. % in practice.
    However, since the initial iterates may
    have large rank, the complexity of the SVD in the initial iterations
    can be quite large. From \eqref{eq:subproblem_L} it follows that
    one does not need to compute singular
    values smaller than $1/\rho_k$; hence,
    starting~\admip~with a \emph{small} $\rho_0>0$ will significantly
    decrease the complexity of initial iterations.
  \end{enumerate}

In this paper, we propose an algorithm that uses an
increasing sequence of penalties. This may appear as a regressive
step that ignores the accumulated numerical experience with penalty and
augmented Lagrangian algorithms. However, we argue that this
experience does \emph{not} immediately carry over to \admm-type algorithms,
and hence, one should
re-examine the role of increasing penalty parameters.
The reluctance to use increasing penalty sequence goes back and is associated with the experience
of solving convex optimization
problems of the form $ P \equiv \min_x\{f(x):\ Ax=b\}$ % . Quadratic p
% enalty methods solves $(P)$ via
using quadratic penalty methods~(QPM). These methods
solve $P$
by  inexactly solving a sequence of subproblems $P_k \equiv
\min_x\{f(x)+\rho_k\norm{Ax-b_k}_2^2\}$ with $b_k=b$ for all $k\geq
1$. Let $x_k$ denote an
 inexact minimizer of $P_k$ %computed by this scheme
 such that the violation in the optimality conditions is
 within a specified tolerance. Then the infeasibility
 $\norm{Ax_k-b}_2$ is $\cO(\frac{1}{\rho_k})$;
% that is a decreasing function of the
% iteration count
%  $k$.
% }
% For this scheme, the
% constraint violation in the $k$-th iteration,
% in iterative algorithms that penalize the infeasibility using a
% quadratic penalty decreases as
% $\norm{Ax_k-b}_2 = \cO(\rho_k^{-1})$;
therefore, the penalty parameter
$\rho_k$ must
be increased to infinity in order to ensure feasibility.
 % in the limit for the penalty methods.
Traditionally, each inexact solution $x_k$ is computed using a second-order
method where the Hessian is of the form $\nabla^2 f(x)+2\rho_kA^TA$. It is important to note that since
the condition number is an increasing function of $\rho_k$,
one encounters numerical instabilities while solving $P_k$ for large $k$ values.
% a second-order method is used to compute the inexact solution $x_k$ to $(P_k)$, followed by an increase in the penalty parameter. On the other hand, the condition number of the Hessian is an increasing function of the penalty parameter; therefore, when the penalty parameter $\rho_k$ is large, one encounters numerical instabilities
% %, and also an increase in the number of iterations required to
% while solving the subproblem $(P_k)$. %  corresponding
% % to a particular penalty value.
On the other hand, in augmented Lagrangian methods~(ALM), i.e. \emph{method of multipliers},
one computes an inexact solution $x_k$ to the
subproblem $P_k$ %\equiv  \min_x\{f(x)+\rho_k\norm{Ax-b -y_k}_2^2\}$ % where
with $b_k=b+y_k$, and then
updates $y_{k+1} = \frac{\rho_k}{\rho_{k+1}}(b_k-Ax_k)$, for all $k\geq 1$. In contrast to QPM, ALM
guarantees primal convergence for a constant penalty sequence, i.e. $\rho_k=\rho$ for all $k\geq 1$;
% can work with a constant penalty,
% i.e. $\rho_k=\rho>0$ for all $k\geq 1$.
% In each iteration of ALM ALM iterates $x_k$ are
%   guaranteed to converge to a primal optimal solution
hence, obviating the need to choose an increasing penalty sequence, and avoiding the numerical instability encountered while solving $P_k$ for large $k$.
% One very
%   important property of ALM is that it ensures convergence to an optimal
%     solution of $(P)$ using a fixed penalty parameter $\rho>0$, hence,
%     obviating the need for $\rho_k$ to grow unboundedly in quadratic
%     penalty methods.
    In this context, proposing an algorithm, \admip, that uses an
increasing sequence of penalties would appear to be contradictory, ignoring the accumulated numerical experience with penalty and augmented Lagrangian algorithms. % On the other hand, we believe that
However, this experience does
not immediately carry over to \admm-type algorithms; % and hence, one should
% re-examine the role of increasing penalty parameters.
there are significant differences between
\admip\ and the quadratic penalty methods, that suggest that the numerical issues
observed in penalty methods are not likely to arise in \admip, and
therefore, an increasing sequence of penalties is worth
revisiting. Indeed, \admip~is a \emph{first-order} algorithm that only employs
shrinkage~\cite{Daubechies-Defrise-DeMol-04} type operations in each
iteration (see Step~\ref{algeq:subproblem1} and Step~\ref{algeq:subproblem2} of
\admip~displayed in Figure~\ref{alg:nsa}). Moreover, unlike quadratic
penalty methods that solve  the subproblems $P_k$ to an accuracy that
  increases with $k$, \admip~takes only one step for each
$P_k$; more importantly, each step can be computed in closed form and is not prone to numerical
instability; thus, avoiding the numerical
problems associated with quadratic penalty methods due to use of an
increasing penalty sequence. Furthermore, the results of our
numerical experiments reported in Section~\ref{sec:computations} clearly
indicate that using an increasing sequence
of penalty multipliers results in faster convergence in practice; in fact, the
performance of \admip\ dominates the performance of \admm-type algorithms
for any fixed penalty term. % The results of our numerical experiments
% comparing \admip~with \proc{ASALM} show that \admip~is faster and also
The numerical experiments also confirm that \admip\ is significantly more robust to changes
in problem parameters. %\red{Before we conclue this section, it is also important to note that, }

\subsection*{Organization}
% This paper is dedicated to developing an efficient algorithm, NSA, for solving SPCP problem with missing data stated in
% \eqref{prob:SPCP-missing}.
We propose \admip~in Section~\ref{sec:nsa} % and state some technical
% lemmas that will be used to
and prove its convergence in
Section~\ref{sec:convergence}. In  Section~\ref{sec:computations} we
report the results of our numerical experiments where we compare
the performance of \admip~with \asalm~on a set of synthetic randomly generated problems and
% we show how our proposed
% method can be applied to solve
on a large-scale problem involving foreground extraction from a noisy surveillance video.
% We report numerical results on background extraction and on some
% synthetic problems showing that NSA is competitive with current
% state-of-the-art solver for SPCP in terms of accuracy and speed.
\vspace*{-0.05in}
\begin{figure}[!ht]
    \rule[0in]{6.5in}{1pt}\\
    \textbf{Algorithm \admip($Z_0,Y_0, \{\rho_k\}_{k\in\integers_+}$)}\\
    \rule[0.125in]{6.5in}{0.1mm}
    \vspace{-0.25in}
    {\footnotesize
    \begin{algorithmic}[1]
    \STATE \textbf{input:} $Z_0\in\reals^{m\times n}$, $Y_0\in\reals^{m\times n}$, $\{\rho_k\}_{k\in\integers_+}\subset\reals_{++}$ such that $\rho_{k+1}\geq\rho_k$, $\rho_k\rightarrow\infty$
    \STATE $k \gets 0$
    \WHILE{$k\geq 0$}
    \STATE $L_{k+1}\gets\argmin_L\{\norm{L}_*+\fprod{Y_k, L-Z_k}+\frac{\rho_k}{2}\norm{L-Z_k}_F^2\}$ \label{algeq:subproblem1}
    %\STATE $\hat{Y}_{k+1}\gets Y_k+\rho_k (L_{k+1}-Z_{k})$
    \STATE $(Z_{k+1},S_{k+1})\gets\argmin_{\{(Z,S): \norm{\proj{Z+S-D}}_F\leq\delta\}}\left\{\xi\norm{S}_1+\fprod{-Y_k, Z-L_{k+1}}+\frac{\rho_k}{2}\norm{Z-L_{k+1}}_F^2\right\}$ \label{algeq:subproblem2}
    %\STATE Compute $\theta_{k}$ - an optimal Lagrangian dual variable for the $\frac{1}{2}\norm{\proj{Z+S-D}}^2_F\leq\frac{\delta^2}{2}$ constraint
    \STATE $Y_{k+1}\gets Y_k+\rho_k (L_{k+1}-Z_{k+1})$
    %\STATE Choose $\rho_{k+1}$ such that $\rho_{k+1}\geq\rho_{k}$
    \STATE $k \gets k + 1$
    \ENDWHILE
    \end{algorithmic}
    }
    \rule[0.125in]{6.5in}{0.1mm}
    \vspace*{-0.4in}
    \caption{\admip: Alternating Direction Method with Increasing Penalty}
    \label{alg:nsa}
\end{figure}
%     }
% \end{algorithm}
% In this section we present the algorithm, NSA, for solving problem
% \eqref{prob:SPCP-missing} that is based on partial variable
% splitting combined with alternating minimization of augmented
% Lagrangian functions with increasing penalty multipliers.
\vspace*{-0.1in}
\section{An \admm~algorithm with partial variable splitting and increasing penalty sequence}
\label{sec:nsa}
Let
\eq
\chi:=\{(Z,S)\in\reals^{m\times n}\times\reals^{m\times
  n}:~\norm{\proj{Z+S-D}}_F\leq\delta\}
\en
denote the feasible set in
\eqref{prob:SPCP-missing} and let $\mathbf{1}_\chi(\cdot,\cdot)$ denote the
indicator function of the closed convex set
$\chi\subset\reals^{m\times n}\times\reals^{m\times n}$, i.e. if
$(Z,S)\in\chi$, then $\mathbf{1}_\chi(Z,S)=0$; otherwise,
$\mathbf{1}_\chi(Z,S)=\infty$.
We use partial variable splitting, i.e. we only split the $L$
variables in \eqref{eq:stable_component_pursuit}, to arrive at the
following equivalent problem
\begin{align}
\label{eq:problem_generic_split}
\min_{L,Z,S\in\reals^{m\times n}}\{\norm{L}_*+\xi~\norm{S}_1+\mathbf{1}_\chi(Z,S):\ L=Z\}.
\end{align}
The augmented Lagrangian function of \eqref{eq:problem_generic_split} is defined as follows:
\begin{align}
\label{eq:augmented_lagrangian}
\mathcal{L}_\rho(L,Z,S;Y)=\norm{L}_*+\xi~\norm{S}_1+\mathbf{1}_\chi(Z,S)+\fprod{Y, L-Z}+\frac{\rho}{2}\norm{L-Z}_F^2.
\end{align}
%\red{Then minimizing \eqref{eq:augmemted_lagrangian} by alternating between
%$L$ and then $(Z,S)$ subject to $(Z,S)\in\chi$, updating the Lagrangian multiplier $Y$, and increasing the penalty multiplier $\rho$
%leads to one iteration of Algorithm NSA displayed in Figure~\ref{alg:nsa}}.
In each iteration of \admip~in Figure~\ref{alg:nsa}, the next iterate $L_{k+1}$ is computed by minimizing \eqref{eq:augmented_lagrangian} over $L\in\reals^{m\times n}$ by setting $\rho=\rho_k$ and $(Y,Z,S)=(Y_k, Z_k, S_k)$; the next iterate $(Z_{k+1},S_{k+1})$ is computed by minimizing \eqref{eq:augmented_lagrangian} over $(Z,S)\in\chi$, by setting $\rho=\rho_k$ and $(Y,L)=(Y_k, L_{k+1})$; finally we set the next dual variable $Y_{k+1}=Y_k+\rho_k(L_{k+1}-Z_{k+1})$.

The computational complexity of each iteration of \admip~is
determined by the subproblems solved in Step~\ref{algeq:subproblem1}
and Step~\ref{algeq:subproblem2}.
%It is crucial to solve the subproblems given in Step~\ref{algeq:subproblem1} and Step~\ref{algeq:subproblem2} in \textbf{Algorithm~\ref{alg:nsa}} efficiently.
The subproblem in Step~\ref{algeq:subproblem1} is a matrix
shrinkage problem and can be solved efficiently by computing an SVD
%singular value decomposition~(SVD)
of an $m\times n$ matrix. The explicit solution of the matrix shrinkage problem is given in
\eqref{eq:subproblem_L}. The subproblem in Step~\ref{algeq:subproblem2}
has the following generic form:
\begin{align}
\label{eq:subproblem_nsa}
(P_{ns}):\  \min\left\{\xi\norm{S}_1+\left\langle Q, Z-\tilde{Z}\right\rangle+\frac{\rho}{2}\norm{Z-\tilde{Z}}_F^2:\ (Z,S)\in \chi\right\},
\end{align}
where $\rho>0$, $Q$, $\tilde{Z}\in\reals^{m\times n}$ are given
problem parameters. %  Lemma~\ref{lem:subproblem} shows that this
                    %  computation can be done efficiently.
% Now before stating Lemma~\ref{lem:subproblem}, we would like to fix
% the notation we will be using throughout the paper. Let $\cE$ be a
% metric space, $A\subset\cE$ and $f:\cE\rightarrow\reals$. $f(A)$
% denotes the image of $A$ and $\partial f(x)$ denotes the set of
% subgradients of $f$ at $x\in\cE$.

\begin{lemma}
\label{lem:subproblem}
The optimal solution $(Z^*,S^*)$ to problem $(P_{ns})$ can be written
in closed form.
\begin{enumerate} [(i)]
\item Suppose $\delta>0$. Then
  \begin{eqnarray}
    S^* & = &
    \sgn\left(\proj{D-q(\tilde{Z})}\right) \odot\max\left\{\left|\proj{D-q(\tilde{Z})}\right|-\xi\frac{(\rho+\theta^*)}{\rho\theta^*}~E,\
      \mathbf{0}\right\}, \label{lemeq:S}\\
     Z^* & = &  \proj{\frac{\theta^*}{\rho+\theta^*}~(D-S^*)+\frac{\rho}{\rho+\theta^*}~q(\tilde{Z})}+\pi_{\Omega^c}\left(q(\tilde{Z})\right), \label{lemeq:Z}
  \end{eqnarray}
  where $q(\tilde{Z}):=\tilde{Z}-\rho^{-1}~Q$; $E$ and
  $\mathbf{0}\in\reals^{m\times n}$ are matrices with all components
  equal to ones and zeros, respectively; $\odot$ denotes the
  component-wise multiplication operator. When
  $\norm{\pi_\Omega(D-q(\tilde{Z}))}_F\leq\delta$, the multiplier $\theta^*=0$; otherwise, $\theta^*$ is the unique positive solution of the nonlinear equation $\phi(\theta)=\delta$, where
  \begin{align}
    \label{eq:phi-def}
    \phi(\theta):= \norm{\min\left\{\frac{\xi}{\theta}~E,\ \frac{\rho}{\rho+\theta}~\left|\proj{D-q(\tilde{Z})}\right|\right\}}_F.
  \end{align}
  The multiplier $\theta^*$ can be efficiently computed in $\cO(|\Omega|\log(|\Omega|))$ time.
\item Suppose $\delta=0$. Then
  \begin{equation}
    \label{lemeq:LS_nonsmooth_delta0}
    S^*=\sgn\left(\proj{D-q(\tilde{Z})}\right) \odot\max\left\{\left|\proj{D-q(\tilde{Z})}\right|-\xi\rho^{-1}~E,\ \mathbf{0}\right\},
  \end{equation}
  and $Z^*=\proj{D-S^*}+\projc{q(\tilde{Z})}$.
\end{enumerate}
\end{lemma}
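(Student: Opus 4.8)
The plan is to reduce $(P_{ns})$ to a constrained shrinkage problem, separate the index sets $\Omega$ and $\Omega^c$, and then characterize the solution on $\Omega$ through a Karush--Kuhn--Tucker (KKT) system. Completing the square in the $Z$-block,
\[
\fprod{Q, Z-\tilde Z} + \frac{\rho}{2}\norm{Z-\tilde Z}_F^2 = \frac{\rho}{2}\norm{Z - q(\tilde Z)}_F^2 - \frac{1}{2\rho}\norm{Q}_F^2,
\]
so, dropping the constant, $(P_{ns})$ is equivalent to minimizing $\xi\norm{S}_1 + \frac{\rho}{2}\norm{Z-q(\tilde Z)}_F^2$ over $\chi$. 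Since the constraint only involves $\proj{Z+S-D}$, the entries on $\Omega^c$ decouple from those on $\Omega$: minimizing $\xi\norm{S}_1$ and $\frac{\rho}{2}\norm{Z-q(\tilde Z)}_F^2$ off $\Omega$ forces $\projc{S^*}=\mathbf 0$ and $\projc{Z^*}=\projc{q(\tilde Z)}$, which already explains the $\projc{q(\tilde Z)}$ term in \eqref{lemeq:Z} and the vanishing of $S^*$ off $\Omega$. It then remains to solve the restricted problem on $\Omega$.

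For part (i), with $\delta>0$ the point $(Z,S)=(D,\mathbf 0)$ gives $\norm{\proj{Z+S-D}}_F=0<\delta$, so Slater's condition holds; hence strong duality and existence of an optimal multiplier $\theta^*\geq 0$ for the convex constraint $\tfrac12(\norm{\proj{Z+S-D}}_F^2-\delta^2)\leq 0$ follow, and the KKT conditions are necessary and sufficient. I would write the $Z$-stationarity $\rho(Z-q(\tilde Z)) + \theta^*\proj{Z+S-D}=\mathbf 0$, the $S$-inclusion $\mathbf 0\in\xi\,\partial\norm{S}_1+\theta^*\proj{Z+S-D}$, and complementary slackness. On $\Omega$ the $Z$-equation solves linearly and yields exactly \eqref{lemeq:Z}. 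Substituting back, a short computation gives the residual $\proj{Z^*+S^*-D}=\frac{\rho}{\rho+\theta^*}\big(\proj{S^*}-\proj{D-q(\tilde Z)}\big)$, so the $S$-inclusion collapses, entrywise, to the scalar soft-thresholding condition $\mathbf 0\in\xi\,\partial|s_{ij}| + \frac{\rho\theta^*}{\rho+\theta^*}\big(s_{ij}-(\proj{D-q(\tilde Z)})_{ij}\big)$, whose closed-form solution is the shrinkage operator with threshold $\xi\frac{\rho+\theta^*}{\rho\theta^*}$, precisely \eqref{lemeq:S}.

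The remaining task is to pin down $\theta^*$. If the unconstrained minimizer $(q(\tilde Z),\mathbf 0)$ is feasible, i.e. $\norm{\proj{D-q(\tilde Z)}}_F\leq\delta$, then $\theta^*=0$. Otherwise the constraint is active, $\norm{\proj{Z^*+S^*-D}}_F=\delta$, and inserting the shrinkage formula into the residual expression above and simplifying entrywise gives $|(\proj{Z^*+S^*-D})_{ij}| = \min\{\xi/\theta^*,\ \frac{\rho}{\rho+\theta^*}|(\proj{D-q(\tilde Z)})_{ij}|\}$, so the active-constraint equation is exactly $\phi(\theta^*)=\delta$ with $\phi$ as in \eqref{eq:phi-def}. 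Each summand of $\phi$ is continuous and strictly decreasing in $\theta$ wherever positive, with $\phi(0^+)=\norm{\proj{D-q(\tilde Z)}}_F>\delta$ and $\phi(\theta)\to 0$ as $\theta\to\infty$, giving existence and uniqueness of the positive root. For the complexity claim I would note that $\phi$ is piecewise smooth with breakpoints $\theta_{ij}=\xi\rho/(\rho|(\proj{D-q(\tilde Z)})_{ij}|-\xi)$; sorting these $\cO(|\Omega|)$ breakpoints in $\cO(|\Omega|\log|\Omega|)$ time localizes $\theta^*$ to a single interval on which $\phi^2$ is an explicit rational function of $\theta$, and $\phi(\theta^*)=\delta$ is then solved in closed form.

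Finally, part (ii) is the degenerate case $\delta=0$, where the constraint becomes the equality $\proj{Z+S-D}=\mathbf 0$. Substituting $\proj{Z}=\proj{D}-\proj{S}$ into the objective reduces the $\Omega$-problem to the unconstrained shrinkage $\min_S \xi\norm{\proj{S}}_1 + \frac{\rho}{2}\norm{\proj{S}-\proj{D-q(\tilde Z)}}_F^2$, whose solution is \eqref{lemeq:LS_nonsmooth_delta0}, and back-substitution gives $Z^*=\proj{D-S^*}+\projc{q(\tilde Z)}$. I expect the main obstacle to be the algebra in part (i) that collapses the coupled $(Z,S)$ optimality system into a single soft-thresholding with effective weight $\frac{\rho\theta^*}{\rho+\theta^*}$ and produces the scalar equation $\phi(\theta^*)=\delta$; once that reduction and the monotonicity of $\phi$ are established, the rest is bookkeeping.
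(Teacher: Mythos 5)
Your proposal is correct and follows essentially the same route as the paper's proof: both derive the KKT system for the constraint written as $\tfrac12\norm{\proj{Z+S-D}}_F^2\leq\tfrac{\delta^2}{2}$, decouple the $\Omega^c$ entries, collapse the coupled optimality conditions into a single soft-thresholding with effective weight $\tfrac{\rho\theta^*}{\rho+\theta^*}$, characterize $\theta^*$ via the strictly decreasing function $\phi$ in \eqref{eq:phi-def}, and obtain the $\cO(|\Omega|\log|\Omega|)$ complexity by sorting breakpoints and solving a quartic on the identified interval. The only cosmetic differences are that you complete the square and invoke Slater's condition explicitly up front, whereas the paper assumes the multiplier and solves a $2\times 2$ block linear system for $(\proj{Z^*},\proj{S^*})$; the conclusions coincide.
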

\begin{proof}
Proof is almost the same with that of Lemma~6.1 in~\cite{Aybat13_1j}. For the sake of completeness, we included the proof in Appendix~\ref{app:proof-1}.
\end{proof}

Note that Lemma~\ref{lem:subproblem} also gives the  worst case
computational complexity of proximal
gradient type first-order methods such as FISTA~\cite{Beck09_1J} and
Algorithm~2 in \cite{Tseng08} applied to the ``smoothed'' version of
the SPCP problem
 % Instead of directly solving $(P)$, FISTA~\cite{Beck09_1J} and
 % Algorithm~2 in \cite{Tseng08} can be applied to the problem:
% \begin{align}
% \label{eq:problem_ns}
$\min_{L,S\in\reals^{m\times n}}\{f_\mu(L)+\xi~\norm{S}_1:\ (L,S)\in\chi\}$,
where % $f_\mu(.)\in C^{1,1}$ is a smooth approximation of
      % $\norm{.}_*$, which is defined as follows
% \begin{align}
$f_\mu(L)=\max_{U\in\reals^{m\times n}:\norm{U}_2\leq
  1}\fprod{L,U}-\frac{\mu}{2}\norm{U}_F^2$. For $\mu=\Theta(\epsilon)$, Lemma~\ref{lem:subproblem} implies that FISTA computes an $\epsilon$-optimal solution of problem $\eqref{prob:SPCP-missing}$ in $\cO(1/\epsilon)$ iterations.
% iterations.% \label{eq:smooth_f}
% \end{align}
% Since $\grad f_\mu(.)$ is Lipschitz continuous with constant
% $L_\mu=\frac{1}{\mu}$, FISTA converges to the optimal solution of
% \eqref{eq:problem_ns} with the convergence rate of
% $\cO\left(\frac{\mu^{-1}}{k^2}\right)$. Thus, setting
% $\mu=\Omega(\epsilon)$, FISTA computes an $\epsilon$-optimal,
% feasible solution of problem $(P)$ in $k^*=\cO(1/\epsilon)$
% iterations.
% \end{remark}

The following lemma will be used later in
Section~\ref{sec:convergence}. However, we state it here since it is related to problem~$(P_{ns})$.
%uses some of the notation from Lemma~\ref{lem:subproblem}.
\begin{lemma}
\label{lem:chi_subgradient}
Suppose that $\delta>0$. Let $(Z^*,S^*)$ be an optimal solution to problem~$(P_{ns})$
%\begin{equation}
%\label{lemeq:subproblem}
%\begin{array}{ll}
%\min_{L,S\in\reals^{m\times n}} & \xi\norm{S}_1+\fprod{Q,L-\tilde{L}}+\frac{\rho}{2}\norm{L-\tilde{L}}_F^2,\\
%s.t. & \frac{1}{2}~\norm{L+S-D}^2_F\leq\frac{1}{2}~\delta^2,
%\end{array}
%\end{equation}
and $\theta^*$ be an optimal Lagrangian multiplier such that
$(Z^*,S^*)$ and $\theta^*$ together satisfy the Karush-Kuhn-Tucker~(KKT) conditions. Then $(W^*,W^*)\in\partial \mathbf{1}_\chi(Z^*,S^*)$, where %$\chi=\{(L,S)\in\reals^{m\times n}:\ \norm{L+S-D}_F\leq\delta\}$ and
$W^*:=-Q+\rho(\tilde{Z}-Z^*)=\theta^*~\proj{Z^*+S^*-D}$.
\end{lemma}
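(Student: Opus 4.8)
The plan is to extract the claimed identity for $W^*$ directly from the stationarity part of the KKT conditions for $(P_{ns})$, and then to verify the subgradient inclusion by showing that $(W^*,W^*)$ belongs to the normal cone of $\chi$ at $(Z^*,S^*)$. Since $\chi$ is closed and convex and $(Z^*,S^*)\in\chi$, we have $\partial\mathbf{1}_\chi(Z^*,S^*)=N_\chi(Z^*,S^*)$, the normal cone; hence it suffices to prove that $\fprod{W^*,(Z+S)-(Z^*+S^*)}\leq 0$ for every $(Z,S)\in\chi$, which is exactly the defining inequality of $N_\chi$ evaluated at the pair $(W^*,W^*)$.

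First I would write the Lagrangian of $(P_{ns})$ using the equivalent squared form of the constraint, attaching the multiplier $\tfrac{\theta}{2}$ to $\norm{\proj{Z+S-D}}_F^2-\delta^2$; with this normalization the KKT multiplier coincides with the $\theta^*$ of Lemma~\ref{lem:subproblem}. The objective of $(P_{ns})$ is smooth in $Z$, so stationarity in $Z$ is an equation rather than an inclusion: using $\pi_\Omega^*=\pi_\Omega$ and $\pi_\Omega\circ\pi_\Omega=\pi_\Omega$, the gradient of $\tfrac12\norm{\proj{Z+S-D}}_F^2$ in $Z$ is $\proj{Z+S-D}$, whence
\begin{equation*}
Q+\rho(Z^*-\tilde{Z})+\theta^*\proj{Z^*+S^*-D}=\mathbf{0}.
\end{equation*}
Rearranging yields $-Q+\rho(\tilde{Z}-Z^*)=\theta^*\proj{Z^*+S^*-D}$, which are precisely the two expressions defining $W^*$, establishing the stated identity.

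It then remains to prove $\fprod{W^*,(Z+S)-(Z^*+S^*)}\leq 0$ for all $(Z,S)\in\chi$. If $\theta^*=0$ then $W^*=\mathbf{0}$ and the inequality is trivial, so assume $\theta^*>0$; by complementary slackness the constraint is then active, i.e. $\norm{\proj{Z^*+S^*-D}}_F=\delta$. Writing $A:=Z+S-D$ and $A^*:=Z^*+S^*-D$, so that $(Z+S)-(Z^*+S^*)=A-A^*$ and $W^*=\theta^*\proj{A^*}$, I would use that $\proj{A^*}$ is supported on $\Omega$ together with the self-adjointness and idempotence of $\pi_\Omega$ to get $\fprod{\proj{A^*},A}=\fprod{\proj{A^*},\proj{A}}$ and $\fprod{\proj{A^*},A^*}=\norm{\proj{A^*}}_F^2=\delta^2$, so that $\tfrac{1}{\theta^*}\fprod{W^*,A-A^*}=\fprod{\proj{A^*},\proj{A}}-\delta^2$. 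Feasibility of $(Z,S)$ gives $\norm{\proj{A}}_F\leq\delta$, and Cauchy--Schwarz then yields $\fprod{\proj{A^*},\proj{A}}\leq\norm{\proj{A^*}}_F\,\norm{\proj{A}}_F\leq\delta^2$; since $\theta^*>0$, the inequality follows. The only point I expect to require care is the multiplier bookkeeping: one must use the squared constraint with the factor $\tfrac12$ so that the gradient carries no spurious factor of two and the multiplier is exactly the $\theta^*$ of Lemma~\ref{lem:subproblem}. Once this normalization is fixed, the remaining content is the elementary Cauchy--Schwarz estimate above, with complementary slackness supplying the equality $\norm{\proj{A^*}}_F=\delta$ that makes the two bounds match.
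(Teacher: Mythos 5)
Your proof is correct, but it reaches the subgradient inclusion by a genuinely different route than the paper. Both arguments extract the identity $W^*=-Q+\rho(\tilde{Z}-Z^*)=\theta^*\,\proj{Z^*+S^*-D}$ from the same stationarity condition in $Z$ (condition \eqref{condition1} in the proof of Lemma~\ref{lem:subproblem}). From there the paper proceeds structurally: it reads off from the first-order optimality conditions of $(P_{ns})$ that $(W^*,W)\in\partial\mathbf{1}_\chi(Z^*,S^*)$ for some $W$, identifies $W$ with $W^*$ via the KKT stationarity condition in $S$, and invokes a chain rule for subdifferentials (Theorem 23.9 in Rockafellar) applied to the composite structure of $\chi$. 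You instead verify the inclusion from first principles: since $\partial\mathbf{1}_\chi(Z^*,S^*)=N_\chi(Z^*,S^*)$, it suffices to check $\fprod{W^*,(Z+S)-(Z^*+S^*)}\leq 0$ for all $(Z,S)\in\chi$, which you do by splitting on $\theta^*=0$ versus $\theta^*>0$, using complementary slackness to get $\norm{\proj{Z^*+S^*-D}}_F=\delta$ in the active case, and closing with Cauchy--Schwarz and primal feasibility. Your route buys elementarity and self-containment -- it needs no subdifferential calculus rules or constraint qualifications, only convexity of $\chi$, the self-adjointness and idempotence of $\pi_\Omega$, and the complementary-slackness identity -- at the cost of being specific to the ball-type structure of $\chi$; the paper's argument is shorter on the page but leans on the cited chain rule and on sign bookkeeping between the $Z$- and $S$-components of the optimality system. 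Your remark about normalizing the constraint as $\tfrac12\norm{\proj{Z+S-D}}_F^2\leq\tfrac{\delta^2}{2}$ so that the multiplier matches the $\theta^*$ of Lemma~\ref{lem:subproblem} is exactly the convention the paper adopts, so the identification is consistent.
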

\begin{proof}
See Appendix~\ref{app:proof-2} for the proof.
\end{proof}

\section{Convergence of \admip}
\label{sec:convergence}
When $\rho_k=\rho>0$ for all $k\geq 1$, the convergence of \admip~directly follows from the standard convergence theory of \admm~-see a recent survey paper \cite{Boyd11_1J} for the proof of convergence. In the rest of the paper, we will focus on the case where $\{\rho_k\}_{k\in\integers_+}$ is a monotonically increasing sequence, and we prove that \admip~primal-dual iterate sequence $\{(L_k,S_k,Y_k)\}_{k\in\integers_+}$ converges under mild conditions on the penalty sequence $\{\rho_k\}_{k\in\integers_+}$. We first
establish a sequence of results that extend the similar results in
\cite{Ma09_1J} to the case of constrained subproblems and partial splitting of
variables.
%Throughout this section, other than the NSA iterate sequence $\{L_k,Z_k,S_k,Y_k\}_{k\in\integers_+}$, we will use two other sequences: $\{\hat{Y}_k\}_{k\in\integers_+}$ and $\{\theta_k\}_{k\in\integers_+}$.}
%\red{$\{\hat{Y}_k\}_{k\in\integers_+}$ is defined as follows:}
Define $\{\hat{Y}_k\}_{k\in\integers_+}$ as
\begin{align}
\label{eq:yhat}
\hat{Y}_{k+1}:=Y_k+\rho_k(L_{k+1}-Z_k).
\end{align}
%$\{\hat{Y}_k\}_{k\in\integers_+}$  is not actually computed in NSA and it is an auxiliary sequence which is only used in the theoretical results stated in this section.

The subproblem in Step~\ref{algeq:subproblem2} of \admip is equivalent to
\begin{align}
\min_{Z,S}\left\{\xi\norm{S}_1+\fprod{-Y_k, Z-L_{k+1}}+\frac{\rho_k}{2}\norm{Z-L_{k+1}}_F^2:\ \frac{1}{2}\norm{\proj{Z+S-D}}^2_F\leq\frac{\delta^2}{2}\right\}.
\label{eq:subproblem2_equiv}
\end{align}
In Lemma~\ref{lem:subproblem} we show that the optimal solution of this problem can be written in closed form in terms of $\theta^*$ such that $\phi(\theta^*)=\delta$. Let $\theta_k$ denote the value of $\theta^*$ when Lemma~\ref{lem:subproblem} is applied to the instance in \eqref{eq:subproblem2_equiv}. Then the proof of Lemma~\ref{lem:subproblem} implies that $\theta_k$ is the optimal dual corresponding to the constraint in \eqref{eq:subproblem2_equiv}.

%\red{On the other hand, for all $k\in\integers_+$, $\theta_k$ is actually computed while solving the $k$-th subproblem in Step~\ref{algeq:subproblem2} of NSA and $\theta_k$ is equal to $\theta^*$ given in the statement of Lemma~\ref{lem:subproblem} applied to the $k$-th subproblem. Moreover, the proof of Lemma~\ref{lem:subproblem} shows that $\theta_k$ computed that way is an optimal Lagrangian dual variable for the $(Z,S)\in\chi$ constraint, i.e. $\theta_k$ satisfies
%\begin{align}
%\label{eq:theta_def}
%(Z_{k+1},S_{k+1})=\argmin_{Z,S}\left\{\xi\norm{S}_1+\fprod{-Y_k, Z-L_{k+1}}+\frac{\rho_k}{2}\norm{Z-L_{k+1}}_F^2+\frac{\theta_k}{2}\left(\norm{\proj{Z+S-D}}^2_F-\delta^2\right)\right\}.
%\end{align}}
\begin{lemma}
\label{lem:subgradients}
Let $f(\cdot):=\norm{\cdot}_*$, $g(\cdot):=\xi~\norm{\cdot}_1$ and let
$\{L_k,Z_k,S_k,Y_k\}_{k\in\integers_+}$ denote the \admip~iterates corresponding to the penalty sequence $\{\rho_k\}_{k\in\integers_+}$ and let $\{\hat{Y}_k\}_{k\in\integers_+}$ denote the sequence defined in \eqref{eq:yhat}. Then for
all $k\geq 1$, $-Y_k\in\partial g(S_k)$ and
$-\hat{Y}_k\in\partial
f(L_k)$. Thus,
$\{Y_k\}_{k\in\integers_+}$ and $\{\hat{Y}_k\}_{k\in\integers_+}$ are
bounded sequences. Moreover, $\proj{Y_k}=Y_k$ for all $k\geq 1$.
\end{lemma}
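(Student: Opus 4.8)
The plan is to read off both subgradient inclusions directly from the first-order optimality conditions of the two subproblems in Steps~\ref{algeq:subproblem1} and \ref{algeq:subproblem2}, and then obtain boundedness and the projection identity as immediate corollaries. First I would treat the $L$-update. Since $L_{k+1}$ minimizes $\norm{L}_*+\fprod{Y_k,L-Z_k}+\frac{\rho_k}{2}\norm{L-Z_k}_F^2$ over $L\in\reals^{m\times n}$, its stationarity condition reads $0\in\partial\norm{L_{k+1}}_*+Y_k+\rho_k(L_{k+1}-Z_k)$. Recognizing the combination $Y_k+\rho_k(L_{k+1}-Z_k)$ as exactly $\hat{Y}_{k+1}$ from \eqref{eq:yhat} gives $-\hat{Y}_{k+1}\in\partial f(L_{k+1})$, and reindexing yields $-\hat{Y}_k\in\partial f(L_k)$ for all $k\geq 1$.

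Next I would handle the $(Z,S)$-update. Here I would invoke Lemma~\ref{lem:chi_subgradient} applied to the instance of $(P_{ns})$ with $Q=-Y_k$, $\tilde{Z}=L_{k+1}$, $\rho=\rho_k$, and $(Z^*,S^*,\theta^*)=(Z_{k+1},S_{k+1},\theta_{k+1})$. This produces $W^*=-Q+\rho(\tilde{Z}-Z^*)=Y_k+\rho_k(L_{k+1}-Z_{k+1})=Y_{k+1}$, together with $(Y_{k+1},Y_{k+1})\in\partial\mathbf{1}_\chi(Z_{k+1},S_{k+1})$ and the representation $Y_{k+1}=\theta_{k+1}\proj{Z_{k+1}+S_{k+1}-D}$. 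The $Z$-component of the stationarity condition for the subproblem then merely reproduces the dual update (a consistency check), while the $S$-component, $0\in\xi\,\partial\norm{S_{k+1}}_1+Y_{k+1}$, yields $-Y_{k+1}\in\partial g(S_{k+1})$; reindexing gives the claim for all $k\geq 1$.

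Finally I would derive boundedness and the projection identity. Since $\partial g(S)=\xi\,\partial\norm{S}_1$ is contained in the $\norm{\cdot}_\infty$-ball of radius $\xi$, and $\partial f(L)=\partial\norm{L}_*$ is contained in the spectral-norm unit ball — the standard fact that the subdifferential of a norm lies in the unit ball of the dual norm — the two inclusions just established bound $\{Y_k\}_{k\in\integers_+}$ and $\{\hat{Y}_k\}_{k\in\integers_+}$ uniformly, independently of $\rho_k$. For $\proj{Y_k}=Y_k$, I would use the representation $Y_{k+1}=\theta_{k+1}\proj{Z_{k+1}+S_{k+1}-D}$, which places $Y_{k+1}$ in the range of $\pi_\Omega$; idempotency $\pi_\Omega\circ\pi_\Omega=\pi_\Omega$ then gives $\proj{Y_{k+1}}=Y_{k+1}$, and reindexing finishes the proof.

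The main obstacle is the identification $Y_{k+1}=\theta_{k+1}\proj{Z_{k+1}+S_{k+1}-D}$, i.e. that the alternating-direction dual update coincides with the multiplier-scaled projected residual of the constrained subproblem; everything else (the two subgradient memberships, boundedness via dual-norm balls, and the idempotency argument) is routine once this holds. This identity is precisely what Lemma~\ref{lem:chi_subgradient} supplies, so the only real care needed is to match the data substitution for $(P_{ns})$ correctly and to confirm that the gradient of the squared constraint $\frac{1}{2}\norm{\proj{Z+S-D}}_F^2$ equals $\proj{Z+S-D}$ in both the $Z$ and $S$ directions, using $\pi_\Omega^*=\pi_\Omega$ and $\pi_\Omega\circ\pi_\Omega=\pi_\Omega$. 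A secondary point worth stating explicitly is that the subproblems are solvable in closed form (Lemma~\ref{lem:subproblem}), so the multiplier $\theta_{k+1}$ exists and the KKT conditions invoked above are genuinely attained.
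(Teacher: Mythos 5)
Your proposal is correct and follows essentially the same route as the paper: both read the inclusion $-\hat{Y}_{k+1}\in\partial\norm{L_{k+1}}_*$ off the stationarity condition of the Step~\ref{algeq:subproblem1} subproblem, and both obtain $-Y_{k+1}\in\xi\,\partial\norm{S_{k+1}}_1$ and $Y_{k+1}=\theta_k\,\proj{Z_{k+1}+S_{k+1}-D}$ from the KKT conditions of the Step~\ref{algeq:subproblem2} subproblem (the paper writes these conditions out directly as \eqref{eq:Sopt_cond}--\eqref{eq:Zopt_cond}, while you route them through Lemma~\ref{lem:chi_subgradient}, which is proved from the very same conditions), then conclude boundedness from the dual-norm-ball characterization of norm subdifferentials and $\proj{Y_k}=Y_k$ from idempotency of $\pi_\Omega$. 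The only cosmetic discrepancy is your indexing of the multiplier as $\theta_{k+1}$ where the paper writes $\theta_k$ for the $k$-th subproblem.
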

\begin{proof}
See Appendix~\ref{app:proof-3} for the proof.
\end{proof}

Before discussing the convergence properties of \admip~in Theorem~\ref{thm:main}, we need to state a technical result in Lemma~\ref{lem:finite_sums} which will play a key role in proving the main result of this paper: Theorem~\ref{thm:main}.
\begin{lemma}
\label{lem:finite_sums}
Suppose $\delta>0$. Let
$\{L_k,Z_k,S_k,Y_k\}_{k\in\integers_+}$ denote the \admip~iterates corresponding to the non-decreasing sequence of penalty multipliers, $\{\rho_k\}_{k\in\integers_+}$. Let
$(L^*,L^*,S^*)\in\argmin_{L,Z,S}\{\norm{L}_*+\xi~\norm{S}_1:\
\frac{1}{2}\norm{\proj{Z+S-D}}^2_F\leq\frac{\delta^2}{2},\ L=Z\}$ denote
any optimal solution,  $Y^*\in\reals^{m\times n}$ and $\theta^*\geq 0$
denote any optimal Lagrangian duals corresponding to the constraints $L=Z$
and  $\frac{1}{2}\norm{\proj{Z+S-D}}^2_F\leq\frac{\delta^2}{2}$,
respectively. Then
$\{\norm{Z_{k}-L^*}_F^2+\rho_{k}^{-2}\norm{Y_{k}-Y^*}_F^2\}_{k\in\integers_+}$
is a non-increasing  sequence and
\begin{equation*}
\begin{array}{ll}
\sum_{k\in\integers_+}\norm{Z_{k+1}-Z_k}_F^2<\infty,\hspace{5mm}
&\sum_{k\in\integers_+}\rho_{k}^{-2}\norm{Y_{k+1}-Y_k}_F^2<\infty,\\
\sum_{k\in\integers_+}\rho_k^{-1}\fprod{-Y_{k+1}+Y^*,
  S_{k+1}-S^*}<\infty,\hspace{5mm}
&\sum_{k\in\integers_+}\rho_k^{-1}\fprod{-\hat{Y}_{k+1}+Y^*,
  L_{k+1}-L^*}<\infty,
\end{array}
\end{equation*}
\vspace{-5mm}
\begin{equation*}
\begin{array}{c}
\sum_{k\in\integers_+}\rho_k^{-1}\fprod{Y^*-Y_{k+1}, L^*+S^*-Z_{k+1}-S_{k+1}}<\infty.
\end{array}
\end{equation*}
\end{lemma}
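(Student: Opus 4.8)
The plan is to prove a one-step energy (Lyapunov) inequality for the potential
\[
\Phi_k := \norm{Z_k-L^*}_F^2 + \rho_k^{-2}\norm{Y_k-Y^*}_F^2 ,
\]
extending the fixed-penalty argument of \cite{Ma09_1J} to the constrained, partially-split setting. First I would record the first-order optimality conditions. By Lemma~\ref{lem:subgradients}, $-\hat{Y}_{k+1}\in\partial f(L_{k+1})$ and $-Y_{k+1}\in\partial g(S_{k+1})$; and Lemma~\ref{lem:chi_subgradient}, applied to the Step~\ref{algeq:subproblem2} instance $Q=-Y_k$, $\tilde Z=L_{k+1}$, $\rho=\rho_k$ (for which $W^*=Y_k+\rho_k(L_{k+1}-Z_{k+1})=Y_{k+1}$), gives $(Y_{k+1},Y_{k+1})\in\partial\mathbf{1}_\chi(Z_{k+1},S_{k+1})$. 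The KKT conditions for the optimal triple furnish the matching inclusions $-Y^*\in\partial f(L^*)$, $-Y^*\in\partial g(S^*)$, and $(Y^*,Y^*)\in\partial\mathbf{1}_\chi(L^*,S^*)$, where $Y^*=\theta^*\proj{L^*+S^*-D}$ and $Z^*=L^*$.

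Monotonicity of each subdifferential, evaluated between the iterate and the optimum, then produces three nonnegative quantities that are exactly the summands in the last three claims: from $f$, $A_k:=\fprod{-\hat{Y}_{k+1}+Y^*, L_{k+1}-L^*}\geq 0$; from $g$, $B_k:=\fprod{-Y_{k+1}+Y^*, S_{k+1}-S^*}\geq 0$; and from $\mathbf{1}_\chi$, $C_k:=\fprod{Y^*-Y_{k+1}, L^*+S^*-Z_{k+1}-S_{k+1}}\geq 0$, where $Z^*=L^*$ has been used in the $Z$-component. A direct computation gives the useful identity $B_k+C_k=\fprod{Y^*-Y_{k+1}, L^*-Z_{k+1}}\geq 0$.

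The central computation expands $\Phi_{k+1}-\Phi_k$. The device that makes an \emph{increasing} penalty work is to first use $\rho_{k+1}\geq\rho_k$ to bound $\Phi_{k+1}\leq \norm{Z_{k+1}-L^*}_F^2 + \rho_k^{-2}\norm{Y_{k+1}-Y^*}_F^2$, so that both terms carry the common weight $\rho_k^{-2}$. Expanding each square via $\norm{b}_F^2-\norm{a}_F^2=2\fprod{b-a,b}-\norm{b-a}_F^2$, substituting the update relations $Y_{k+1}-Y_k=\rho_k(L_{k+1}-Z_{k+1})$ and $\hat{Y}_{k+1}-Y_{k+1}=\rho_k(Z_{k+1}-Z_k)$, and regrouping the resulting cross terms through the identity for $A_k$, I expect to arrive at
\[
\Phi_{k+1}-\Phi_k \leq -2\rho_k^{-1}A_k - 2\rho_k^{-1}(B_k+C_k) - 2\fprod{Z_{k+1}-Z_k,\ L_{k+1}-Z_{k+1}} - \norm{Z_{k+1}-Z_k}_F^2 - \norm{L_{k+1}-Z_{k+1}}_F^2 .
\]
The main obstacle is the sign of the cross term $\fprod{Z_{k+1}-Z_k,\ L_{k+1}-Z_{k+1}}=\rho_k^{-1}\fprod{Z_{k+1}-Z_k,\ Y_{k+1}-Y_k}$, which is not a priori nonnegative and is precisely what is needed to separate the two squared-difference terms. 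I would resolve it by \emph{consecutive-iterate} monotonicity: from $(Y_{k+1},Y_{k+1}),(Y_k,Y_k)\in\partial\mathbf{1}_\chi$ one gets $\fprod{Y_{k+1}-Y_k,\ (Z_{k+1}-Z_k)+(S_{k+1}-S_k)}\geq 0$, while from $-Y_{k+1},-Y_k\in\partial g$ one gets $\fprod{Y_{k+1}-Y_k,\ S_{k+1}-S_k}\leq 0$; subtracting yields $\fprod{Y_{k+1}-Y_k,\ Z_{k+1}-Z_k}\geq 0$. Hence every term on the right-hand side above is nonpositive, which establishes that $\{\Phi_k\}$ is non-increasing.

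Finally I would telescope. Summing the energy inequality and using $\Phi_k\geq 0$ bounds the sum of all five nonnegative quantities by $\Phi_1<\infty$. Reading off the individual summands then gives every remaining claim: $\norm{L_{k+1}-Z_{k+1}}_F^2=\rho_k^{-2}\norm{Y_{k+1}-Y_k}_F^2$ yields $\sum_k\rho_k^{-2}\norm{Y_{k+1}-Y_k}_F^2<\infty$; $\norm{Z_{k+1}-Z_k}_F^2$ yields $\sum_k\norm{Z_{k+1}-Z_k}_F^2<\infty$; $\rho_k^{-1}A_k$ yields the $L$-inner-product claim; and since $B_k,C_k\geq 0$ \emph{individually} while $\sum_k\rho_k^{-1}(B_k+C_k)<\infty$, the separate claims $\sum_k\rho_k^{-1}B_k<\infty$ and $\sum_k\rho_k^{-1}C_k<\infty$ follow as well. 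The only bookkeeping left is the treatment of the initial index (the consecutive-monotonicity step requires the inclusions at both $k$ and $k+1$, hence $k\geq 1$), but the finitely many initial terms do not affect summability and the non-increasing property itself follows for all $k$ via the completion-of-square bound $\Phi_{k+1}-\Phi_k\leq -2\rho_k^{-1}A_k-2\rho_k^{-1}(B_k+C_k)-\norm{L_{k+1}-Z_k}_F^2$.
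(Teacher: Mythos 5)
Your proposal is correct and follows essentially the same route as the paper's proof: the same Lyapunov quantity, the same use of $\rho_{k+1}\geq\rho_k$ to keep a common weight $\rho_k^{-2}$, the same monotonicity inequalities for $\partial\norm{\cdot}_*$, $\partial\xi\norm{\cdot}_1$, and $\partial\mathbf{1}_\chi$ (both between consecutive iterates and against the optimum), and the same telescoping of nonnegative summands. The only cosmetic difference is that you isolate $\fprod{Y_{k+1}-Y_k,Z_{k+1}-Z_k}\geq 0$ as a standalone fact, whereas the paper folds the two consecutive-iterate monotonicity inequalities directly into the recursion; the algebra is identical.
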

\begin{proof}
See Appendix~\ref{app:proof-4} for the proof.
\end{proof}

The partial split formulation~\eqref{eq:problem_generic_split} is equivalent to
$$\min_{L,Z,S\in\reals^{m\times n}}\left\{\norm{L}_*+\xi~\norm{S}_1:\ L=Z,\ \frac{1}{2}\norm{\proj{Z+S-D}}^2_F\leq\frac{\delta^2}{2}\right\}.$$
The Lagrangian function for this formulation is given by
\begin{align}
\label{eq:lagrangian_split}
\mathcal{L}(L,Z,S;Y,\theta)=\norm{L}_*+\xi~\norm{S}_1+\fprod{Y,L-Z}+\frac{\theta}{2}\left(\norm{\proj{Z+S-D}}_F^2-\delta^2\right).
\end{align}
\begin{theorem}
\label{thm:main}
Suppose $\delta>0$. Let
$\{L_k,Z_k,S_k,Y_k\}_{k\in\integers_+}$ denote the \admip~iterates corresponding to the penalty multiplier sequence $\{\rho_k\}_{k\in\integers_+}$. Let $\{\theta_k\}_{k\in\integers_+}$ be the sequence such that $\theta_k$ is the optimal dual corresponding to the constraint in \eqref{eq:subproblem2_equiv}.
\begin{enumerate}[(i)]
\item Suppose $\{\rho_k\}_{k \in\integers_+}$ is a non-decreasing sequence
  such that $\sum_{k\in\integers_+}\frac{1}{\rho_k}=\infty$. Then
  $L^*:=\lim_{k\in\integers_+}Z_k=\lim_{k\in\integers_+}L_k$ and
  $S^*:=\lim_{k\in\integers_+}S_k$ exist; and  $(L^*,
  S^*)$ are optimal for the SPCP problem. % \in\argmin\{\norm{L}_*+\xi~\norm{S}_1:\
  % \norm{\proj{L+S-D}}_F\leq\delta\}$.
\item Suppose $\{\rho_k\}_{k\in\integers_+}$ is a non-decreasing sequence
  such that $\sum_{k\in\integers_+}\frac{1}{\rho_k^2}=\infty$. Then,
  in the case that $\norm{\proj{D-L^*}}_F\neq \delta$,
  $(Y^\ast,\theta^*):=\lim_{k\in\integers_+}(Y_k,\theta_k)$ exists, and
  % $Y^*:=\lim_{k\in\integers_+}Y_k$ exist, and
  $(L^*,L^*,S^*,Y^*,\theta^*)$ is a saddle point of the Lagrangian
  function $\mathcal{L}$ in \eqref{eq:lagrangian_split}.
  Otherwise, i.e. when $\norm{\proj{D-L^*}}_F=\delta$,
  $\{Y_k,\theta_k\}_{k\in\integers_+}$ has a limit point
  $(Y^*,\theta^*)$, such that $(Y^*,\theta^*)\in\argmax_{Y,\theta}\{$
  $\mathcal{L}(L^*,L^*,S^*;Y,\theta):\ \theta\geq 0\}$.
\end{enumerate}
%and any limit point $(Y^*,\theta^*)$ satisfies
%$(Y^*,\theta^*)=\argmax_{Y,\theta}\{\mathcal{L}(L^*,L^*,S^*,Y,\theta):\
%\theta\geq 0\}$.
\end{theorem}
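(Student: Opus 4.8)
The plan is to prove the primal claim (i) first and then bootstrap the dual claim (ii). Throughout I write $f:=\norm{\cdot}_*$, $g:=\xi\norm{\cdot}_1$, let $p^*$ be the optimal value of the SPCP problem, fix any optimal $(L^*,L^*,S^*)$ with Lagrange duals $(Y^*,\theta^*)$, and set $u:=\proj{L^*+S^*-D}$ and $v_k:=\proj{Z_{k+1}+S_{k+1}-D}$. By Lemma~\ref{lem:finite_sums} the quantity $\norm{Z_k-L^*}_F^2+\rho_k^{-2}\norm{Y_k-Y^*}_F^2$ is non-increasing, hence convergent; since $\{Y_k\}$ is bounded (Lemma~\ref{lem:subgradients}) and $\rho_k\to\infty$, the dual term vanishes, so $\norm{Z_k-L^*}_F^2$ converges and $\{Z_k\}$ is bounded. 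The first two summable series of Lemma~\ref{lem:finite_sums} give $Z_{k+1}-Z_k\to 0$ and, through the dual update, $L_{k+1}-Z_{k+1}=\rho_k^{-1}(Y_{k+1}-Y_k)\to 0$, so $L_k$ inherits the limiting behaviour of $Z_k$. Because each $S_{k+1}$ from Lemma~\ref{lem:subproblem} is supported on $\Omega$ and $\norm{v_k}_F\le\delta$, boundedness of $Z_k$ forces $\{S_k\}$ bounded as well.

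Next I would extract an optimal limit point. The three ``cross'' series in Lemma~\ref{lem:finite_sums} have nonnegative summands: monotonicity of $\partial f,\partial g$ applies because $-\hat Y_{k+1}\in\partial f(L_{k+1})$, $-Y_{k+1}\in\partial g(S_{k+1})$ (Lemma~\ref{lem:subgradients}) while the KKT inclusions give $-Y^*\in\partial f(L^*)\cap\partial g(S^*)$; monotonicity of $\partial\mathbf 1_\chi$ applies via Lemma~\ref{lem:chi_subgradient}, which also yields $Y_{k+1}=\theta_k v_k$ and $Y^*=\theta^* u$. Since $\sum_k\rho_k^{-1}=\infty$, the $\liminf$ of the sum of these summands is $0$, so along a subsequence all three tend to $0$ and, after a further subsequence, $L_k,Z_k\to\bar Z$ and $S_k\to\bar S$ with $(\bar Z,\bar S)$ feasible. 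Adding the subgradient inequalities for $f$ and $g$ gives $f(L_{k+1})+g(S_{k+1})-p^*\le R_k$, where $R_k$ equals the three cross terms plus $\theta_k(\fprod{v_k,u}-\norm{v_k}_F^2)$ plus a term that vanishes because $L_{k+1}-Z_{k+1}\to0$. Complementary slackness of the Step~\ref{algeq:subproblem2} subproblem ($\theta_k>0\Rightarrow\norm{v_k}_F=\delta$), with $\norm{u}_F\le\delta$ and Cauchy--Schwarz, makes $\theta_k(\fprod{v_k,u}-\norm{v_k}_F^2)\le 0$; hence $\limsup R_k\le0$ along the subsequence, and continuity of the norms yields $f(\bar Z)+g(\bar S)\le p^*$. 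With feasibility this proves $(\bar Z,\bar S)$ optimal. Taking $L^*=\bar Z$ in the monotone quantity above then forces $Z_k\to\bar Z$ (a subsequence already tends to $\bar Z$ and $\norm{Z_k-\bar Z}_F$ converges), giving the full-sequence primal convergence claimed in (i).

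Full convergence of $\{S_k\}$ is where I expect the real work. The plan is to exploit the closed forms \eqref{lemeq:S}--\eqref{lemeq:Z}: writing $w_k:=\proj{D-(L_{k+1}+\rho_k^{-1}Y_k)}$, a short computation reduces the Step~\ref{algeq:subproblem2} solution to $S_{k+1}=\proj{D-Z_{k+1}}+v_k$ with $v_k=-\sgn(w_k)\odot\min\{\tfrac{\rho_k}{\rho_k+\theta_k}|w_k|,\ \tfrac{\xi}{\theta_k}E\}$, and $w_k\to\proj{D-L^*}$ by (i). Thus $\{S_k\}$ converges once $\{\theta_k\}$ does, and $\theta_k$ is the unique root of $\phi_k(\theta)=\delta$, where $\phi_k$ is the strictly decreasing map \eqref{eq:phi-def} built from $w_k$. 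I would let $k\to\infty$ and argue, by monotonicity and pointwise convergence of $\phi_k$ to the limiting profile $\phi_\infty(\theta)=\norm{\min\{\tfrac{\xi}{\theta}E,\ |\proj{D-L^*}|\}}_F$, that $\theta_k$ tends to the root of $\phi_\infty(\theta)=\delta$. \textbf{This is the main obstacle}: when $\norm{\proj{D-L^*}}_F\ne\delta$ the limiting equation has a nondegenerate (transversal, or vacuously inactive) root, so $\theta_k\to\theta^*$, $v_k\to v^*$ and $S_k\to S^*$; but on the boundary $\norm{\proj{D-L^*}}_F=\delta$ the profile $\phi_\infty$ may be flat at level $\delta$, the root need not be unique, and only subsequential control of $\theta_k$ survives — exactly the dichotomy recorded in Theorem~\ref{thm:main}(ii).

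For (ii) I would assemble the dual limit from these pieces. When $\norm{\proj{D-L^*}}_F\ne\delta$, the previous paragraph gives $\theta_k\to\theta^*$, hence $Y_{k+1}=\theta_k v_k\to\theta^*v^*=:Y^*$; I would then verify the KKT relations for $\mathcal L$ in \eqref{eq:lagrangian_split} by passing to the limit in $-\hat Y_k\in\partial f(L_k)$ and $-Y_k\in\partial g(S_k)$ (outer semicontinuity of the two norms' subdifferentials), in the $Z$-stationarity $Y^*=\theta^*\proj{L^*+S^*-D}$, and in feasibility and complementary slackness, concluding that $(L^*,L^*,S^*,Y^*,\theta^*)$ is a saddle point. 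On the boundary case the hypothesis $\sum_k\rho_k^{-2}=\infty$ enters: combined with $\sum_k\rho_k^{-2}\norm{Y_{k+1}-Y_k}_F^2<\infty$ from Lemma~\ref{lem:finite_sums} it forces $\liminf_k\norm{Y_{k+1}-Y_k}_F=0$, which with boundedness of $\{Y_k,\theta_k\}$ yields a convergent dual subsequence; its limit $(Y^*,\theta^*)$ I would show maximizes $\mathcal L(L^*,L^*,S^*;\cdot,\cdot)$ over $\theta\ge0$ via the saddle inequalities, which is the weaker subsequential statement.
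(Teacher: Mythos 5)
Your overall architecture matches the paper's: the same Fej\'er-type monotone quantity and summable series from Lemma~\ref{lem:finite_sums}, the same extraction of a subsequence along which the nonnegative cross terms vanish (using $\sum_k\rho_k^{-1}=\infty$), the same upgrade to full-sequence convergence of $\{Z_k\}$ via the non-increasing sequence $\{\norm{Z_k-L^*}_F^2+\rho_k^{-2}\norm{Y_k-\bar Y}_F^2\}$, and the same $\phi_k\to\phi_\infty$ analysis of the multipliers $\theta_k$ through the closed forms of Lemma~\ref{lem:subproblem}. Your treatment of part (ii) is also the paper's: $\sum_k\rho_k^{-2}=\infty$ combined with $\sum_k\rho_k^{-2}\norm{\hat Y_{k+1}-Y_{k+1}}_F^2<\infty$ yields a subsequence with $\rho_k(Z_{k+1}-Z_k)\to 0$, along which the stationarity inclusions pass to the limit, with the identification $Y_{k+1}=\theta_k\proj{Z_{k+1}+S_{k+1}-D}$ driving the dual limit.

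There is, however, one genuine gap. Part (i) asserts that $S^*=\lim_k S_k$ exists \emph{unconditionally}, including when $\norm{\proj{D-L^*}}_F=\delta$. Your route to the convergence of $\{S_k\}$ goes entirely through the convergence of $\{\theta_k\}$, and you correctly observe that on the boundary only subsequential control of $\theta_k$ survives; but you then dismiss this as exactly the dichotomy recorded in part (ii). That dichotomy concerns the \emph{dual} variables only; the primal claim in (i) still requires the whole sequence $\{S_k\}$ to converge in the boundary case, and your argument does not deliver it. The paper closes this hole by splitting the $S_k$-analysis on $\norm{\proj{D-L^*}}_F\le\delta$ versus $>\delta$ rather than $\ne\delta$ versus $=\delta$: when $\norm{\proj{D-L^*}}_F\le\delta$ the pair $(L^*,\mathbf{0})$ is feasible, so optimality forces $S^*=\mathbf{0}$, and the subgradient inequality \eqref{eq:S_limit_equality_cond} (the analogue of \eqref{eq:convexity_bound} with $(L^*,\mathbf{0})$ as comparison point) gives $\norm{S_k}_1\to 0$ along the entire sequence with no reference to $\theta_k$ whatsoever. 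You need this, or some substitute, to complete part (i); the remainder of your outline is sound.
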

The condition
$\sum_{k\in\integers_+}\frac{1}{\rho_k}=\infty$ is similar to the
condition in Theorem~2 in \cite{Ma09_1J} that is needed to show that
algorithm I-ALM converges to an optimal solution of the robust PCA
problem.
Let $\Omega=\{(i,j): 1\leq i\leq m,\ 1\leq j\leq n\}$, and
$D=L^0+S^0+N^0$ be given such that % $\norm{N^0}_F\leq\delta$ and
$(L^0,S^0,N^0)$ satisfies the assumptions of
Theorem~\ref{thm:candes2} and $\norm{S^0}_F>\sqrt{Cmn}\delta$. Then,
with very high probability,
$\norm{D-L^*}_F>\delta$, where $C$ is the numerical constant defined
in Theorem~\ref{thm:candes2}. Therefore, in practice,  one is unlikely
to encounter the case where
$\norm{D-L^*}_F=\delta$.

\begin{proof}
Lemma~\ref{lem:finite_sums} and the fact that
$L_{k+1}-Z_{k+1}=\frac{1}{\rho_k}~(Y_{k+1}-Y_k)$ for all $k\geq 1$,
together imply that
\begin{align*}
\infty>\sum_{k\in\integers_+}\rho_{k}^{-2}\norm{Y_{k+1}-Y_k}_F^2
=\sum_{k\in\integers_+}\norm{L_{k+1}-Z_{k+1}}_F^2.
\end{align*}
Thus, $\lim_{k\in\integers_+}(L_k-Z_k)=0$.

Let
$(L^\#,L^\#,S^\#)\in\argmin_{L,Z,S}\{\norm{L}_*+\xi~\norm{S}_1:\
\frac{1}{2}\norm{\proj{Z+S-D}}^2_F\leq\frac{\delta^2}{2},\
L=Z\}$ denote any optimal solution, $Y^\#\in\reals^{m\times n}$ and
$\theta^\#\geq 0$ denote any Lagrangian dual optimal solutions corresponding to
$L=Z$ and $\frac{1}{2}\norm{\proj{Z+S-D}}^2_F\leq\frac{\delta^2}{2}$
constraints, respectively, and $f^*:=\norm{L^\#}_*+\xi~\norm{S^\#}_1$.

%Moreover, let $\chi=\{(Z,S)\in\reals^{m\times n}\times\reals^{m\times
%n}:\ \norm{Z+S-\pi_\Omega\left(D\right)}_F\leq\delta\}$ and
%$\mathbf{1}_\chi(Z,S)$ denote the indicator function of the closed
%convex set $\chi$, i.e. $\mathbf{1}_\chi(Z,S)=0$ if $(Z,S)\in\chi$;
%otherwise, $\mathbf{1}_\chi(Z,S)=\infty$.
Since $(Z_k,S_k) \in \chi$ for all $k \geq 1$, or equivalently
$\mathbf{1}_\chi(Z_k,S_k)=0$ for all $k\geq 1$, it follows that
\begin{eqnarray}
\lefteqn{\norm{L_k}_*+\xi~\norm{S_k}_1} \nonumber\\
& =&\norm{L_k}_*+\xi~\norm{S_k}_1 + \mathbf{1}_\chi(Z_k,S_k),
\nonumber \\
& \leq &\norm{L^\#}_*+\xi~\norm{S^\#}_1 + \mathbf{1}_\chi(L^\#,S^\#)
+\fprod{\hat{Y}_k, L^\#-L_k}+\fprod{Y_k, S^\#-S_k}-\fprod{Y_k,
  L^\#+S^\#-Z_k-S_k}, \nonumber \\
& =& f^* + \fprod{-\hat{Y}_k+Y^\#, L_k-L^\#}+\fprod{-Y_k+Y^\#,
  S_k-S^\#}+\fprod{Y^\#-Y_k, L^\#+S^\#-Z_k-S_k} \nonumber\\
&&\mbox{} +\fprod{Y^\#, Z_k-L_k}, \label{eq:convexity_bound}
\end{eqnarray}
where the inequality follows from Lemma~\ref{lem:subgradients} and the
fact
that  %$-Y_k\in\xi~\partial\norm{S_k}_1$, $-\hat{Y}_k\in\partial\norm{L_k}_*$ and
$(Y_k,Y_k)\in\partial\mathbf{1}_\chi(Z_k,S_k)$ -see Lemma~\ref{lem:chi_subgradient}; and
\eqref{eq:convexity_bound} follows  from rearranging the terms and the fact
that $(L^\#,S^\#)\in\chi$.

From Lemma~\ref{lem:finite_sums}, we have that
$$\sum_{k\in\integers_+}\rho_{k-1}^{-1}\left(\fprod{-\hat{Y}_{k}+Y^\#,
    L_{k}-L^\#}+\fprod{-Y_{k}+Y^\#, S_{k}-S^\#}+\fprod{Y^\#-Y_{k},
    L^\#+S^\#-Z_{k}-S_{k}}\right)<\infty.$$
First consider the case where
$\sum_{k\in\integers_+}\frac{1}{\rho_k}=\infty$. There exists
$\cK\subset\integers_+$ such that
\begin{align}
\lim_{k\in\cK}\left(\fprod{-\hat{Y}_{k}+Y^\#,
    L_{k}-L^\#}+\fprod{-Y_{k}+Y^\#, S_{k}-S^\#}+\fprod{Y^\#-Y_{k},
    L^\#+S^\#-Z_{k}-S_{k}}\right)=0. \label{eq:inner_product_limit}
\end{align}
Therefore, \eqref{eq:convexity_bound}, \eqref{eq:inner_product_limit} and
$\lim_{k\in\integers_+}(Z_k-L_k)=0$ together  imply that
\[
\limsup_{k \in \cK} \norm{L_k}_*+\xi~\norm{S_k}_1 \leq
f^*. % =\norm{L^\#}_*+\xi~\norm{S^\#}_1=\min\{\norm{L}_*+\xi~\norm{S}_1:\
% (L,S)\in\chi\}.
\]
Hence, $\{\norm{L_k}_*+\xi~\norm{S_k}_1\}_{k\in\cK}$ is a bounded
sequence. Therefore, there exists $\cK^*\subset\cK\subset\integers_+$
such that $\{(L_k,S_k)\}_{k\in\cK^*}$ has a limit. Let
$(L^*,S^*):=\lim_{k\in\cK^*}(L_k,S_k)$. Since
$\lim_{k\in\integers_+}(Z_k-L_k)=0$ and $(Z_k,S_k)\in\chi$ for all
$k\geq 1$, we have $(L^*,S^*)=\lim_{k\in\cK^*}(Z_k,S_k)\in\chi$. Taking
the limit of both sides of \eqref{eq:convexity_bound} along $\cK^*$
gives
\eq
\norm{L^*}_*+\xi~\norm{S^*}_1
=\lim_{k\in\cK^*}\norm{L_k}_*+\xi~\norm{S_k}_1\leq
f^*,
\en
and since $(L^*,S^*)\in\chi$, we conclude that
$(L^*,S^*)\in\argmin\{\norm{L}_*+\xi~\norm{S}_1:\ (L,S)\in\chi\}$.

Note that
\eq
(L^*,L^*,S^*) \in \argmin_{L,Z,S}\{\norm{L}_*+\xi~\norm{S}_1:\
\frac{1}{2}\norm{\proj{Z+S-D}}^2_F\leq\frac{\delta^2}{2},\ L=Z\}.
\en
Let $\bar{Y}\in\reals^{m\times n}$ and $\bar{\theta}\geq 0$ denote any
Lagrangian dual optimal solutions corresponding to $L=Z$ and
$\frac{1}{2}\norm{\proj{Z+S-D}}^2_F\leq\frac{\delta^2}{2}$
constraints, respectively. Lemma~\ref{lem:subgradients} implies that
$\{Y_k\}$ is a bounded sequence. Thus, from Lemma~\ref{lem:finite_sums}, it
follows that
$\{\norm{Z_{k}-L^*}_F^2+\rho_{k}^{-2}\norm{Y_{k}-\bar{Y}}_F^2\}_{k\in\integers_+}$
is a bounded, non-increasing sequence, and therefore, has a unique
limit point; hence, every subsequence of this sequence
converges to the same limit.  Combining this result with the
facts that $\lim_{k \in \cK^\ast} Z_k = L^\ast$ and $\{Y_k\}_{k\in\integers_+}$ is a bounded sequence, it follows that
\begin{eqnarray*}
\lim_{k\in\integers_+}\norm{Z_{k}-L^*}_F^2 & = &
\lim_{k\in\integers_+}\norm{Z_{k}-L^*}_F^2+\rho_{k}^{-2}\norm{Y_{k}-\bar{Y}}_F^2\\
& = &
\lim_{k\in\cK^\ast}\norm{Z_{k}-L^*}_F^2+\rho_{k}^{-2}\norm{Y_{k}-\bar{Y}}_F^2,\\
& = & \lim_{k\in\cK^\ast}\norm{Z_{k}-L^*}_F^2,\\
& = & 0.
\end{eqnarray*}
% Since every subsequence of a convergent sequence
% converges to the same limit, weget
% \begin{align*}
% \lim_{k\in\integers_+}\norm{Z_{k}-L^*}_F^2=\lim_{k\in\integers_+}\norm{Z_{k}-L^*}_F^2+\rho_{k}^{-2}\norm{Y_{k}-\bar{Y}}_F^2=\lim_{k\in\cK^*}\norm{Z_{k}-L^*}_F^2+\rho_{k}^{-2}\norm{Y_{k}-\bar{Y}}_F^2=0,
% \end{align*}
% where we used  $\lim_{k\in\cK^*}Z_k=L^*$, $\rho_k\nearrow\infty$ as $k\rightarrow\infty$ and $\{\hat{Y}_k\}_{k\in\integers_+}$, $\{Y_k\}_{k\in\integers_+}$ are bounded sequences -see Lemma~\ref{lem:subgradients}.
Since $\lim_{k\in\integers_+}\norm{Z_{k}-L^*}_F=0$ and
$\lim_{k\in\integers_+} (Z_{k}-L_k)=0$, it follows  that $\lim_{k\in\integers_+}L_k=\lim_{k\in\integers_+}Z_k=L^*$.

Lemma~\ref{lem:subproblem} applied to the sub-problem in Step~\ref{algeq:subproblem2} of \admip~corresponding to the $k$-th
iteration gives
\begin{align}
&S_{k+1}=\sgn\left(\proj{D-q(L_{k+1})}\right)\odot\max\left\{\left|\proj{D-q(L_{k+1})}\right|-\xi\frac{(\rho_k+\theta_k)}{\rho_k\theta_k}~E,\
  \mathbf{0}\right\}, \label{eq:Sk}\\
&Z_{k+1}=
\proj{\frac{\theta_k}{\rho_k+\theta_k}~(D-S_{k+1})+\frac{\rho_k}{\rho_k+\theta_k}~q(L_{k+1})}+\pi_{\Omega^c}\left(q(L_{k+1})\right), \label{eq:Lk}
\end{align}
where $q(L_{k+1}):=\left(L_{k+1}+\frac{1}{\rho_k}~Y_k\right)$. Here,
$\theta_k=0$, when $\norm{\proj{D-q(L_{k+1})}}_F\leq\delta$;
otherwise, $\theta_k>0$ is the unique solution of the equation $\phi_k(\theta)=\delta$, where
\begin{equation}
\label{eq:phi_k}
\phi_k(\theta):= \left\|\min\left\{\frac{\xi}{\theta}~E,\
    \frac{\rho_k}{\rho_k+\theta}~\left|\proj{D-q(L_{k+1})}\right|\right\}\right\|_F.
\end{equation}
Since $\lim_{k\in\integers_+}L_k=L^*$, $\{Y_k\}_{k\in\integers_+}$ is
a bounded sequence and $\rho_k\nearrow\infty$, we have that $\lim_{k\in\integers_+}q(L_{k+1})=\lim_{k\in\integers_+}L_{k+1}+\frac{1}{\rho_k}~Y_k=L^*$. Next, we establish $\{S_k\}_{k\in\integers_+}$ has a unique limit point $S^*$.

\begin{enumerate}[(i)]
\item First suppose  $\norm{\proj{D-L^*}}_F\leq\delta$.
  Recall that we have shown that there exists a sub-sequence
  $\cK^*\subset\integers_+$ such that
  \eq
\lim_{k\in\cK^*}(L_k,S_k)=(L^*,S^*)\in\argmin_{L,S}\{\norm{L}_*+\xi\norm{S}_1:\
  \norm{\proj{L+S-D}}_F\leq\delta\}.
  \en
  Since $\norm{\proj{D-L^*}}_F\leq\delta$, $(L^*,\mathbf{0})$ is a
  feasible solution, it follows
  $\norm{L^*}_*+\xi\norm{S^*}\leq\norm{L^*}_*$. Consequently,  $S^*=\mathbf{0}$.
\begin{eqnarray}
\lefteqn{\norm{L_k}_*+\xi~\norm{S_k}_1}\nonumber\\
& =&\norm{L_k}_*+\xi~\norm{S_k}_1 + \mathbf{1}_\chi(Z_k,S_k), \nonumber \\
& \leq &\norm{L^*}_*+\xi~\norm{\mathbf{0}}_1 +
\mathbf{1}_\chi(L^*,\mathbf{0}) %\nonumber \\
%&& \mbox{}
-\fprod{-\hat{Y}_k, L^*-L_k}-\fprod{-Y_k, \mathbf{0}-S_k}-\fprod{Y_k, L^*+\mathbf{0}-Z_k-S_k},\nonumber\\
& = &\norm{L^*}_*+\fprod{\hat{Y}_k, L^*-L_k}+\fprod{Y_k,
  Z_k-L^*}, \label{eq:S_limit_equality_cond}
\end{eqnarray}
where the inequality follows from Lemma~\ref{lem:subgradients} and the
fact that $(Y_k,Y_k)\in\partial \mathbf{1}_\chi(Z_k,S_k)$ (see
Lemma~\ref{lem:chi_subgradient} for details).

Since the sequences $\{Y_k\}_{k\in\integers_+}$ and
$\{\hat{Y}_k\}_{k\in\integers_+}$ are both bounded and
$\lim_{k\in\integers_+}L_k=\lim_{k\in\integers_+}Z_k=L^*$, taking the
limit of  both sides of \eqref{eq:S_limit_equality_cond}, we get
\begin{eqnarray*}
\norm{L^*}_*+\xi~\lim_{k\in\integers_+}\norm{S_k}_1&=&\lim_{k\in\integers_+}\norm{L_k}_*+\xi~\norm{S_k}_1\\
&\leq&\lim_{k\in\integers_+}\norm{L_k}_*+\fprod{\hat{Y}_k,L^*-L_k}+\fprod{Y_k, Z_k-L^*}= \norm{L^*}_*.
\end{eqnarray*}
Therefore, $\lim_{k\in\integers_+}\norm{S_k}_1=0$, which implies that $\lim_{k\in\integers_+}S_k=\mathbf{0}$. Hence, $S^*=\lim_{k\in\integers_+}S_k$.

\item Next, suppose $\norm{\proj{D-L^*}}_F>\delta$.
Since $\lim_{k\in\integers_+}\norm{\proj{D-q(L_{k+1})}}_F =
\norm{\proj{D-L^*}}_F>\delta$, there exists $K\in\integers_+$ such
that for all $k\geq K$, $\norm{\proj{D-q(L_{k+1})}}_F>\delta$. For all
$k\geq K$, $\phi_k(\cdot)$, defined in \eqref{eq:phi_k}, is a
continuous and strictly decreasing
function of $\theta$ for $\theta\geq 0$. Hence, for all $k \geq K$,
the inverse function
$\phi^{-1}_k(.)$ exists in an open neighborhood containing $\delta$. Thus,
$\phi_k(0)=\norm{\proj{D-q(L_{k+1})}}_F>\delta$ for all $k\geq K$ and
$\lim_{\theta\rightarrow\infty}\phi_k(\theta)=0$ imply that
$\theta_k=\phi^{-1}_k(\delta)>0$ for all $k\geq K$. Moreover,
$\phi_k(\theta)\leq\phi(\theta):=\norm{\frac{\xi}{\theta}~E}_F$
implies that for all $k \geq 1$,
\begin{equation}
\label{eq:thetak-upbnd}
\theta_k=\phi_k^{-1}(\delta)\leq\phi^{-1}(\delta)=\frac{\xi\sqrt{mn}}{\delta}.
\end{equation}
Since $\{\theta_k\}_{k\geq K}$ is a bounded sequence, it has a
convergent subsequence $\cK_\theta\subset\integers_+$,
i.e., $\theta^*:=\lim_{k\in\cK_\theta}\theta_k$ exists. We also have
$\phi_k(\theta)\rightarrow\phi_\infty(\theta)$ pointwise for all
$0\leq\theta\leq\frac{\xi\sqrt{mn}}{\delta}$, where
\begin{align}
\phi_\infty(\theta):= \left\|\min\left\{\frac{\xi}{\theta}~E,\
    \left|\proj{D-L^*}\right|\right\}\right\|_F.
\end{align}
Since $\phi_k(\theta_k)=\delta$ for all $k\geq K$, we have
\begin{align}
\delta=\lim_{k\in\cK_\theta}\phi_k(\theta_k)
=\lim_{k\in\cK_\theta}\left\|\min\left\{\frac{\xi}{\theta_k}~E,\
    \frac{\rho_k}{\rho_k+\theta_k}~\left|\proj{D-q(L_{k+1})}
    \right|\right\}\right\|_F=\phi_\infty(\theta^*).
\end{align}
Note that $\phi_\infty(\cdot)$ is also a continuous and strictly
decreasing function of $\theta$ for $\theta\geq 0$. Moreover,
$\phi_\infty(0)=\norm{\proj{D-L^*}}_F>\delta$ implies that
$\phi_\infty$ is invertible around $\delta$, i.e. $\phi_\infty^{-1}$
exists in a neighborhood containing $\delta$, and
$\phi_\infty^{-1}(\delta)>0$. Thus,
$\theta^*=\phi_\infty^{-1}(\delta)$. Since $\cK_\theta$ is an
arbitrary subsequence and $\theta^*=\phi_\infty^{-1}(\delta)$ does not
depend on $\cK_\theta$, we can conclude that
\begin{equation}
  \label{eq:theta-limit}
  \lim_{k\in\integers_+}\theta_k=\phi_\infty^{-1}(\delta)=\theta^*.
\end{equation}
Since
$\theta^*=\lim_{k\in\integers_+}\theta_k$, taking the limit on both
sides of \eqref{eq:Sk}, we get
\begin{align}
S^*:=\lim_{k\in\integers_+}S_{k+1}=\sgn\left(\proj{D-L^*}\right) \odot\max\left\{\left|\proj{D-L^*}\right|-\frac{\xi}{\theta^*}~E,\
  \mathbf{0}\right\},
\end{align}
\end{enumerate}
and this completes the first part of the theorem.

Now, suppose $\{\rho_k\}_{k\in\integers_+}$ is strictly increasing and $\sum_{k =
  1}^\infty \frac{1}{\rho_k^2} = \infty$.  We need two results in
order to establish the convergence of the duals.
From Lemma~\ref{lem:finite_sums}, we have
  $\sum_{k\in\integers_+}\norm{Z_{k+1}-Z_k}_F^2<\infty$. From the
  definition of $\hat{Y}_k$ in (\ref{eq:yhat}), it follows that
  \begin{align}
    \sum_{k\in\integers_+}\rho_k^{-2}\norm{\hat{Y}_{k+1}-Y_{k+1}}_F^2
    = \sum_{k\in\integers_+}\norm{Z_{k+1}-Z_k}_F^2 < \infty.
  \end{align}
  Since $\sum_{k\in\integers_+}\frac{1}{\rho_k^2}=\infty$, there
  exists a sub-sequence $\bar{\cK}\subset\integers_+$ such that
  $\lim_{k\in\bar{\cK}}\norm{\hat{Y}_{k+1}-Y_{k+1}}_F^2=0$. Hence,
  $\lim_{k\in\bar{\cK}}\rho_k^2\norm{Z_{k+1}-Z_k}_F^2=0$,
  i.e.
  \begin{equation}
    \label{eq:Z-limit}
    \lim_{k\in\bar{\cK}}\rho_k(Z_{k+1}-Z_k)=0.
  \end{equation}
  Using \eqref{eq:Lopt_cond}, \eqref{eq:Sopt_cond} and
    \eqref{eq:Zopt_cond} from the proof of
    Lemma~\ref{lem:subgradients} in Appendix~\ref{app:proof-3}, we get
    \begin{eqnarray}
      0 & \in
      & \partial\norm{L_{k+1}}_*+\theta_k\proj{Z_{k+1}+S_{k+1}-D}+\rho_k(Z_{k+1}-Z_k), \label{eq:Lk_opt}\\
      0& \in& \xi\partial\norm{S_{k+1}}_1+ \theta_k\proj{Z_{k+1}+S_{k+1}-D}.\label{eq:Sk_opt}
    \end{eqnarray}
  We will establish the
  convergence of the duals by considering two cases.
  \begin{enumerate}[(i)]
  \item Suppose $\norm{\proj{D-L^*}}_F\neq\delta$. Note that from
    \eqref{eq:Zopt_cond}, it follows that
    $Y_k=\theta_{k-1}\proj{Z_k+S_k-D}$ for all $k\geq 1$.
    First suppose
    that  $\norm{\proj{D-L^*}}_F<\delta$. Since
    \eq
    \lim_{k\in\integers_+}
    \left\|\proj{D-(L_{k+1}+\frac{1}{\rho_k}~Y_k)}\right\|_F
    =\norm{\proj{D-L^*}}_F<\delta,
    \en
    there exists $K\in\integers_+$ such that for all $k\geq K$,
    $\norm{\proj{D-(L_{k+1}+\frac{1}{\rho_k}~Y_k)}}_F<\delta$. Thus,
    from Lemma~\ref{lem:subproblem} for all $k\geq K$, $\theta_k=0$,
    $S_{k+1}=0$, $Z_{k+1}=L_{k+1}+\frac{1}{\rho_k}~Y_k$, which implies
    that $\theta^*=\lim_{k\in\integers_+}\theta_k=0$ and, since $S^*=\lim_{k\in\integers_+}S_k=0$ and
    $\lim_{k\in\integers_+}Z_k=L^*$,
    \eq
    Y^* = \lim_{k\in\integers_+}Y_k =
    \lim_{k\in\integers_+}\theta_{k-1}\proj{Z_{k}+S_{k}-D}=\mathbf{0}.
    \en

    Next, suppose that $\norm{\proj{D-L^*}}_F>\delta$. In this case, we have
    established in (\ref{eq:theta-limit}) that  $\theta^*=\lim_{k\in\integers_+}\theta_k$
    exists. Hence,
    \eq
    \lim_{k\in\integers_+}Y_k=\lim_{k\in\integers_+}\theta_{k-1}
    \proj{Z_k+S_k-D}=\theta^*\proj{L^*+S^*-D} =   Y^\ast.
    \en
    exists.
    % Suppose that $\sum_{k\in\integers_+}\frac{1}{\rho_k^2}=\infty$.

    % We have established above that $\theta^*=\lim_{k\in\integers_+}\theta_k$ and
    % \eq
    % Y^*=\lim_{k\in\integers_+}Y_k=\lim_{k\in\integers_+}
    % \theta_{k-1}\proj{Z_k+S_k-D}=\theta^*\proj{L^*+S^*-D}
    % \en
    % exists when $\norm{D-L^*}\neq\delta$.

    Taking the limit of \eqref{eq:Lk_opt} and \eqref{eq:Sk_opt} along
    $\bar{\cK}\subset\integers_+$ defined in (\ref{eq:Z-limit}); and using the fact that
    $\lim_{k\in\bar{\cK}}\rho_k(Z_{k+1}-Z_k)=0$, we get
    \begin{eqnarray}
      0& \in& \partial\norm{L^*}_*+\theta^*\proj{L^*+S^*-D}, \label{eq:L_opt}\\
      0& \in &\xi\partial\norm{S^*}_1+
      \theta^*\proj{L^*+S^*-D}.\label{eq:S_opt}
    \end{eqnarray}
    % \eqref{eq:L_opt} and \eqref{eq:S_opt} together imply that
    Thus, it follows that the
    primal variables $(L^*, S^*)$ and dual variables
    $Y^*=\theta^*\proj{L^*+S^*-D}$ and $\theta^*$ satisfy KKT optimality
    conditions for the problem \eq
    \min_{L,Z,S}\{\norm{L}_*+\xi~\norm{S}_1:\
    \frac{1}{2}\norm{\proj{Z+S-D}}^2_F\leq\frac{\delta^2}{2},\ L=Z\}.
    \en
    Hence, $(L^*,L^*,S^*,Y^*,\theta^*)$ is a saddle point of the Lagrangian function
    \begin{align*}
      \mathcal{L}(L,Z,S;Y,\theta)
      = \norm{L}_*+\xi~\norm{S}_1
      +\fprod{Y,L-Z}+\frac{\theta}{2}\left(\norm{\proj{Z+S-D}}_F^2-\delta^2\right).
    \end{align*}

  \item Next, consider the case where $\norm{D-L^*}_F=\delta$. Fix
    $k>0$. $\theta_k=0$ if
    $\norm{D-(L_{k+1}+\frac{1}{\rho_k}~Y_k)}_F\leq \delta$; otherwise,
    $\theta_k>0$. Also, from (\ref{eq:thetak-upbnd}) it follows that
    $\theta_k\leq\frac{\xi\sqrt{mn}}{\delta}$. Since
    $\{\theta_k\}_{k\in\integers_+}$ is a bounded sequence, there exists a
    further subsequence $\cK_\theta$ of the sequence $\bar{\cK}$ defined
    in \eqref{eq:Z-limit} such that
    $\theta^*:=\lim_{k\in\cK_\theta}\theta_{k-1}$ and
    $Y^*:=\lim_{k\in\cK_\theta}\theta_{k-1}\proj{Z_k+S_k-D}=\theta^*\proj{L^*+S^*-D}$
    exist.
    Thus, taking the limit of \eqref{eq:Lk_opt},\eqref{eq:Sk_opt}
    along $\cK_\theta\subset\integers_+$ and using the facts that
    $\lim_{k\in\bar{\cK}}\rho_k(Z_{k+1}-Z_k)=0$ and
    $L^*=\lim_{k\in\integers_+}L_k=\lim_{k\in\integers_+}Z_k$,
    $S^*=\lim_{k\in\integers_+}S_k$ exist, we conclude that
    $(L^*,L^*,S^*,Y^*,\theta^*)$ is a saddle point of the Lagrangian
    function $\mathcal{L}(L,Z,S;Y,\theta)$.
  \end{enumerate}
\end{proof}

\section{Numerical experiments}
\label{sec:computations}
We conducted two sets of numerical experiments with \admip~to
solve SPCP problems. In the first set of experiments we solved
randomly generated instances of the SPCP
problem. In this setting, we conducted three different tests.
% performance \admip~ under three different scenarios.}
First,
we compared \admip~with \admm~for different values of the fixed penalty $\rho$;
%~ taking values over an interval};
second, we conducted a set of experiments to understand
how \admip~runtime scales as a function of the problem parameters and
size; and third, we compared
\admip~with \asalm~\cite{Tao09_1J}.
\asalm~is an \admm~algorithm, tailored for the SPCP problem, with a fixed
penalty $\rho$. For each dual update, \asalm~updates \emph{three
  blocks} of primal variables, while \admip~updates \emph{two blocks}.
In the second set of experiments, we compared \admip~and \asalm~ on the
  foreground detection problem, where the goal is to %with the objective of
extract the moving objects from a noisy and corrupted airport security
video~\cite{Li04_1J}. All the numerical experiments were conducted on a
Dell M620 server computing node running on RedHat Enterprise Linux 6 (RHEL
6). Each numerical test was carried out using MATLAB R2013a (64 bit) with
16 GB RAM available on a single core of Intel Leon E5-2665 2.40 GHz
processor. The MATLAB code for \admip\footnote{In an earlier preprint, we named it as Non-Smooth Augmented Lagrangian~\texttt{(NSA)} algorithm.}~is available
at~\url{http://www2.ie.psu.edu/aybat/codes.html} and the code for
\asalm~is available on request from the authors of~\cite{Tao09_1J}.
\subsection{Implementation details}
\begin{figure}[h!]
    \rule[0in]{6.5in}{1pt}\\
    \textbf{Algorithm \admip($Z_0,Y_0, \{\rho_k\}_{k\in\integers_+})$}\\
    \rule[0.125in]{6.5in}{0.1mm}
    \vspace{-0.25in}
    {\footnotesize
    \begin{algorithmic}[1]
    \STATE \textbf{input:} $Z_0\in\reals^{m\times n}$, $Y_0\in\reals^{m\times n}$, $\{\rho_k\}_{k\in\integers_+}\subset\reals_{++}$ such that $\rho_{k+1}\geq\rho_k$, $\rho_k\rightarrow\infty$
    \STATE $k \gets 0$
    \WHILE{$k\geq 0$}
    \STATE Compute $\mathrm{svd}(Z_k-Y_k/\rho_k)$ such that $Z_k-Y_k/\rho_k=U\Diag(\sigma)V^T$
    \STATE $L_{k+1}\gets U\Diag\left(\min\left\{\sigma-\frac{1}{\rho_k}\mathbf{1},0\right\}\right)V^T$ \label{algeq:L-problem}
    \STATE $C\gets L_{k+1}+\rho_k^{-1}Y_k$
    \STATE $\theta^*\gets$\texttt{ThetaSearch}$(|D-C|,\Omega,\delta,\rho_k)$ \label{algeq:theta-problem}
    \STATE $S_{k+1}\gets\sgn\left(\proj{D-C}\right) \odot\max\left\{\left|\proj{D-C}\right|-\xi\frac{(\rho_k+\theta^*)}{\rho_k\theta^*}~E,\ \mathbf{0}\right\}$ \label{algeq:S-problem}
    \STATE $Z_{k+1}\gets \proj{\frac{\theta^*}{\rho_k+\theta^*}~(D-S^*)+\frac{\rho_k}{\rho_k+\theta^*}~C}+\pi_{\Omega^c}\left(C\right)$ \label{algeq:Z-problem}
    \STATE $Y_{k+1}\gets Y_k+\rho_k (L_{k+1}-Z_{k+1})$
    %\STATE Choose $\rho_{k+1}$ such that $\rho_{k+1}\geq\rho_{k}$
    \STATE $k \gets k + 1$
    \ENDWHILE
    \end{algorithmic}
    }
    \rule[0.125in]{6.5in}{0.1mm}
    \vspace*{-0.4in}
    \caption{Pseudocode for \admip}
    \label{alg:pseudocode}
\end{figure}
\begin{figure}[h!]
    \rule[0in]{6.5in}{1pt}\\
    \textbf{Subroutine \texttt{ThetaSearch}($A,\Omega,\delta,\rho$)}\\
    \rule[0.125in]{6.5in}{0.1mm}
    \vspace{-0.25in}
    {\footnotesize
    \begin{algorithmic}[1]
    \STATE \textbf{output:} $\theta^*\in\reals_+$,\ \textbf{input:} $A\in\reals_+^{m\times n}$, $\Omega\subset\{1,\ldots,m\}\times\{1,\ldots,n\}$, $\delta>0$, $\rho>0$
    \IF{$\norm{\proj{A}}_F\leq\delta$}
        \STATE $\theta^*\gets 0$
    \ELSE
        \STATE Compute $0\leq a_{(1)}\leq a_{(2)}\leq \ldots \leq a_{(|\Omega|)}$ by sorting $\{A_{ij}:\ (i,j)\in\Omega\}$
        \STATE $a_{(0)}\gets 0$
        \STATE $\bar{k}\gets\max\{j:\ a_{(j)}\leq\frac{\xi}{\rho},\ 0\leq j\leq |\Omega|\}$
        \IF{$\bar{k}==|\Omega|$}
            \STATE $\theta^*\gets\rho\left(\frac{\norm{\proj{A}}_F}{\delta}-1\right)$
        \ELSE
            \STATE $j^*\gets\bar{k}$
            \FOR{$j=\bar{k}+1,\ldots,|\Omega|$}
                \STATE $\phi_j\gets\sqrt{\left(1-\frac{\xi}{\rho}~a^{-1}_{(j)}\right)^2\sum_{i=0}^j a^2_{(i)}+(|\Omega|-j)\left(a_{(j)}-\frac{\xi}{\rho}\right)^2}$
                \IF{$\phi_j\leq\delta$}
                    \STATE $j^*\gets j$
                \ENDIF
            \ENDFOR
            \IF{$j^*==|\Omega|$}
                \STATE $\theta^*\gets\rho\left(\frac{\norm{\proj{A}}_F}{\delta}-1\right)$
            \ELSE
                \STATE Compute unique $\theta^*>0$ by finding the roots of $\left(\frac{\rho}{\rho+\theta^*}\right)^2\sum_{i=0}^{j^*}a^2_{(i)}+(|\Omega|-1)\left(\frac{\xi}{\theta^*}\right)^2$ \label{algeq:quartic}
            \ENDIF
        \ENDIF
    \ENDIF
    \end{algorithmic}
    }
    \rule[0.125in]{6.5in}{0.1mm}
    \vspace*{-0.4in}
    \caption{\texttt{ThetaSearch}: Subroutine for computing the optimal dual $\theta^*$}
    \label{alg:theta_search}
\end{figure}
The optimal solution of the Step~\ref{algeq:subproblem1} subproblem
corresponding
to the $k$-th iteration is given by
\begin{align}
\label{eq:subproblem_L}
L_{k+1}=U\Diag\left(\min\left\{\sigma-\frac{1}{\rho_k}\mathbf{1},0\right\}\right)V^T,
\end{align}
where $q(Z_k)=Z_k-Y_k/\rho_k=U\Diag(\sigma)V^T$ and $\mathbf{1}$
denotes a vector of all ones. % Note that \eqref{eq:subproblem_L}
                              % requires
Computing the full SVD of $q(Z_k)$ is expensive for large
 instances. However, we do not need to compute the full SVD, because
only the singular values that are larger than $1/\rho_k$ and the
corresponding singular vectors are needed. In order to exploit this
fact, we % adopted
used
a modified version of LANSVD% function of
% PROPACK
~\cite{propack}\footnote{The modified version is available from
  http://svt.stanford.edu/code.html} % % with threshold option
% to compute the partial SVDs. This
% modified version of LANSVD function
% can
that comes with \emph{treshold option} to compute only those singular
vectors with singular values greater than a given
threshold value $\tau>0$. Note that we set $\tau=1/\rho_k$ in
the $k$-th \admip~iteration.

The bottleneck step in the $k$-th iteration of \asalm, which is an
\admm~algorithm with constant penalty $\rho>0$, also involves
computing a low-rank matrix $L_{k+1}$. Indeed, first, a matrix $Q_k$ is computed with
complexity %of computing $Q_k$ is
comparable to that of computing
$q(Z_k)$  in \admip. Next, $L_{k+1}$ is computed as in \eqref{eq:subproblem_L}, where $U\diag(\sigma)V^T$ denotes the SVD of $Q_k$, and $\rho_k=\rho$ for all $k$.
%  is identical to
% that of \admip~ % and determined by the complexity of
% is updating a low-rank
% matrix iterate $L_{k+1}$: first, an SVD $U\diag(\sigma)V^T$ of a matrix
% $Q_k$ is computed (\asalm~iterate $Q_k$ is computed cheaply as $q(Z_k)$ is
% computed in \admip), then
% % the bottleneck
% % step is to compute
% $L_{k+1}$ is updated exactly as in
% an SVD of a low-rank matrix $Q_k=U\Diag(\sigma)V^T$ and
% then update the low-rank matrix $L_{k+1}$ as
Thus, the overall per-iteration
complexity of %the bottleneck step in
\asalm\ is comparable to that of \admip. %  for all $k\geq
% 1$.
% singular values less than the threshold $\tau=1/\rho$ is set to
% zero which results with another low-rank matrix, $L'$. %While
% $\tau=1/\rho$ for \asalm~for every
% iteration, it is $1/\rho_k$ for \admip~for
% the $k$-th iteration.
% The \asalm~code provided by the authors of~\cite{Tao09_1J} does not
% compute $L_{k+1}$ efficiently.
The \asalm~code provided by the authors of~\cite{Tao09_1J} calls the
original LANSVD
function of PROPACK which does not have the
threshold option; consequently, the \asalm\ code computes $L_{k+1}$
by first estimating its
% Indeed, first, the
rank, say $r$, and computing the leading $r$ singular values of
$Q_k$, i.e. $\sigma_1\geq\sigma_2\geq\ldots\geq\sigma_r$. %are computed using
If the $r$-th singular value $\sigma_r\leq 1/\rho$, then $L_{k+1}$ is computed
using singular-value shrinkage as in \eqref{eq:subproblem_L};
otherwise, the estimate $r$ is revised % choosing $r'>r$ (in the
% code provided
by setting $r=\min\{2r,~n\}$, and
the leading $r$ singular values of $Q_k$
are computed
\emph{from scratch}, i.e. the first $r$ that were computed previously are
simply ignored. This process is repeated until $\sigma_{r}\leq 1/\rho$. In
order to improve the efficiency of the \asalm~code and make it comparable to
\admip,
% both codes
% on the same grounds,
we used the modified LANSVD function with the
threshold option  in \emph{both} \admip~and \asalm~to compute low-rank SVDs
more efficiently. %(modified LANSVD function with threshold option
% can be downloaded from
% http://svt.stanford.edu/code.html).
This modification significantly reduced the total number singular values
computed by \asalm~when compared to the code provided by the authors
of~\cite{Tao09_1J}. %  that uses the original LANSVD function \emph{without}
% threshold option.

For all three algorithms, \admip, \admm, and \asalm, we set the
initial iterate $(Z_0,Y_0) = (\mathbf{0},\bo)$. For
\admip~the penalty multiplier sequence $\{\rho_k\}_{k\in\integers_+}$ was
chosen as follows:
\begin{equation}
\label{eq:rho_kappa}
\rho_0=\rho_1=1.25/\sigma_{\max}(\pi_{\Omega}(D)),\qquad
\rho_{k+1}=\min\{\kappa~\rho_k,\ \bar{\rho}+k\}, \quad k \geq 1  ,
\end{equation}
where $\kappa=1.25$, $\bar{\rho}=1000~\rho_0$, and $\pi_{\Omega}(\cdot)$
is defined in \eqref{eq:pi-def}. Note
that for \admm~and \asalm, $\rho_k = \rho$ for some $\rho>0$ for all $k\geq 1$.

  See Figure~\ref{alg:pseudocode} % , we display
  for an implementable
  pseudocode for \admip: line~\ref{algeq:L-problem} follows from
  \eqref{eq:subproblem_L}, and lines~\ref{algeq:S-problem}
  and~\ref{algeq:Z-problem} follow from Lemma~\ref{lem:subproblem}, since
  $\theta^*$ computed in line~\ref{algeq:theta-problem} satisfies the
  conditions given in Lemma~\ref{lem:subproblem} %when
  with
  $Q=-Y_k$,
  $\tilde{Z}=L_{k+1}$, and $\rho=\rho_k$. % Indeed, i
  % It  t is guaranteed that
  Subroutine \texttt{ThetaSearch} in Figure~\ref{alg:theta_search} uses
  the procedure outlined in the proof of Lemma~\ref{lem:subproblem}
  to compute $\theta^*$ in % according to the
  % discussion in the proof of Lemma~\ref{lem:subproblem} on how to compute
  % $\theta^*$ in
  $\cO(|\Omega|\log(|\Omega|))$ time. % It is important to
  Also, note that the roots of the quartic equation in line~\ref{algeq:quartic} of
  Figure~\ref{alg:pseudocode} can be computed in closed form using the
  formula first shown by Lodovico Ferrari, and later published in
  Cardano's Ars Magna in 1545~\cite{boyer91history}.
\subsection{Random SPCP problems}
\label{sec:random_setting}
% In this section all the numerical experiments were carried out on randomly
% generated stable principle component pursuit problems.
For a given sparsity
coefficient $c_s\in\{0.05, 0.1\}$ and a rank coefficient $c_r\in\{0.05,
0.1\}$, the data matrix  $D=L^0+S^0+N^0$ was
generated as follows:
\begin{enumerate}[i.]
\item $L^0=UV^T$, with $U\in\reals^{n\times r}$, $V\in\reals^{n\times
    r}$ for $r=\lceil c_r n\rceil$, and for all $i,j$, $U_{ij}$, $V_{ij}$, were
  independently drawn from a Gaussian distribution with mean $0$ and
  variance $1$.
\item $\Lambda\subset\{(i,j):\ 1 \leq i,j\leq n\}:=I$ was chosen uniformly at
  random such that its cardinality $|\Lambda|=\lceil c_s n^2\rceil$,
\item For each $i,j$, $S^0_{ij}$ was independently drawn from a uniform
  distribution over the interval
  $\left[-\sqrt{\frac{8r}{\pi}},\sqrt{\frac{8r}{\pi}}\right]$.
  % $S^0_{ij}\sim\mathcal{U}\left[-\sqrt{\frac{8r}{\pi}},\sqrt{\frac{8r}{\pi}}\right]$
  % for all $(i,j)\in\Lambda$ are independent random variables uniformly
  % distributed over the specified interval,
  % between $-100$ and $100$,
\item For each $i,j$, $N^0_{ij}$ was independently drawn from a Gaussian
  distribution with mean $0$ and variance $\varrho^2$.
\end{enumerate}
This construction % was motivated by the fact that we wanted
ensures that, on average, the
the magnitude of the non-zero entries of the sparse component $S^0$ % for
% $(i,j)\in\Lambda$, and
is of the same order as the entries of the
low-rank component $L^0$, i.e. $\mE[|L_{i_1 j_1}^0|] =
  \mE[|S_{i_2 j_2}^0|]$ for all $(i_1, j_1)\in I$ and for all $(i_2,
  j_2)\in\Lambda$.
% for all $(i,j)$, to have the same
% magnitude. Indeed,
% Since $U_{ij}$ and $V_{ij}$ are independently drawn from a Gaussian
% distribution with mean zero and unit variance, it follows that
% for large $n$, $L_{ij}^0\sim
% \sqrt{r}~\mathcal{N}(0,1)$. Therefore,
% $E[|L_{ij}^0|]=\sqrt{\frac{2r}{\pi}}$.
% Therefore, the way we created
% $S_{ij}^0$ for $(i,j)\in\Lambda$ ensures that
% $E[|S_{ij}^0|]=\sqrt{\frac{2r}{\pi}}$.

Let $\Omega\subset\{1,\dots,n\}\times\{1,\dots,n\}$ denote the set indices
of the observable entries of $D$, and let $\rm{SR} =\frac{|\Omega|}{n^2}$ denote
the sampling ratio of $D$. Then, the  signal-to-noise ratio % of $D$
is given by
\begin{align}
  \label{eq:snr}
  \rm{SNR}=10\log_{10}\left(\frac{E\left[\norm{\pi_{\Omega}(L^0+S^0)}_F^2\right]}
    {E\left[\norm{\pi_\Omega(N^0)}_F^2\right]}\right)=10\log_{10}\left(\frac{c_r
      n+c_s \frac{8r}{3\pi}}{\varrho^2}\right).
\end{align}
In all the numerical test problems, the value for the noise variance $\varrho^2$ % was set such that the
% signal-to-noise ratio of $D$ is at the
% given level
was set to ensure a certain
$\rm{SNR}$ level, i.e. $\varrho^2= \left(c_r n+c_s
  \frac{8r}{3\pi}\right)10^{-\rm{SNR}/10}$. We
set $\delta = \sqrt{(n + \sqrt{8n})}\varrho$~(see \cite{Tao09_1J}).

\subsubsection{\admm~vs \admip}
\begin{figure}[h!]
\centering
\includegraphics[scale=0.45]{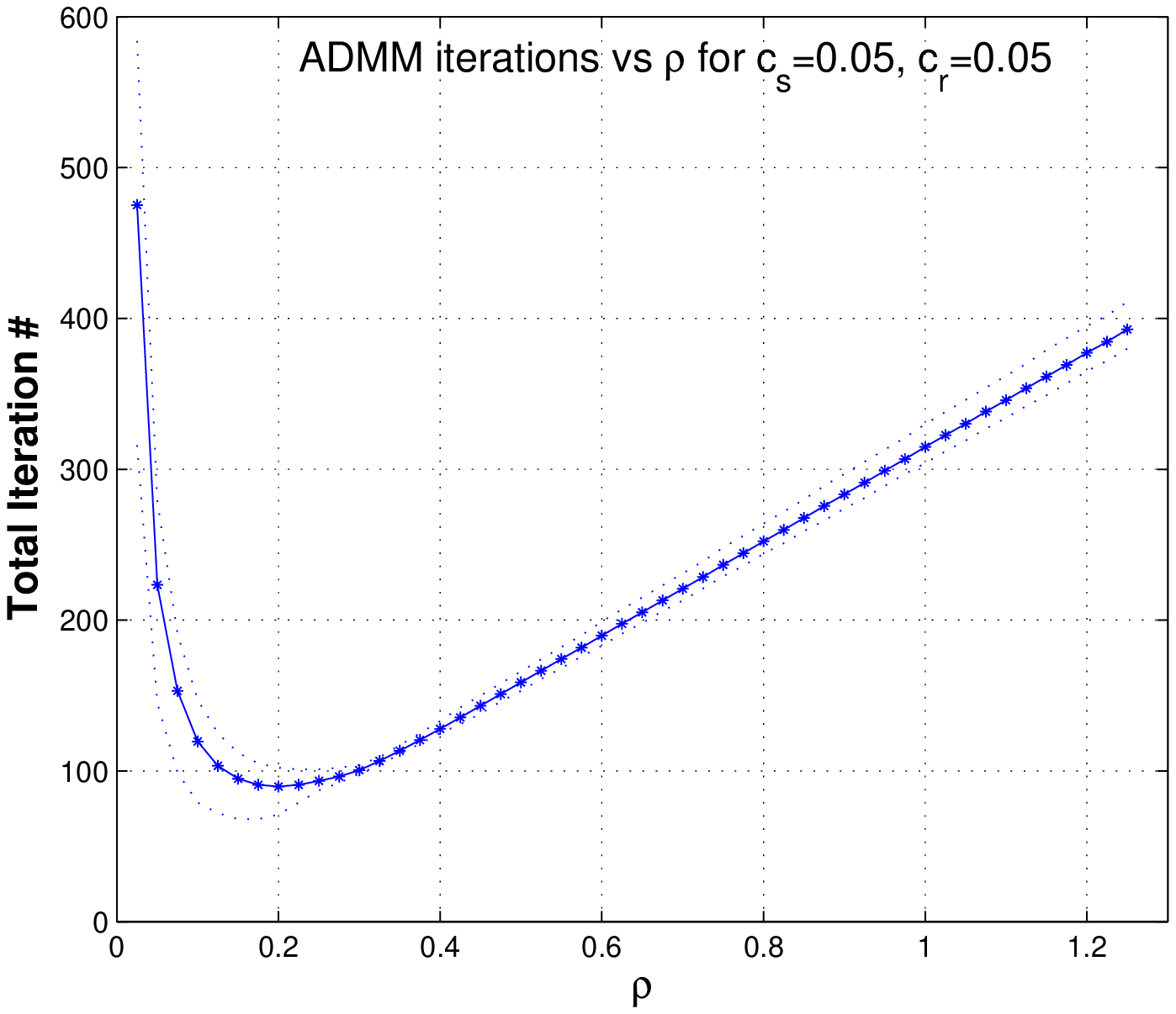}
\includegraphics[scale=0.45]{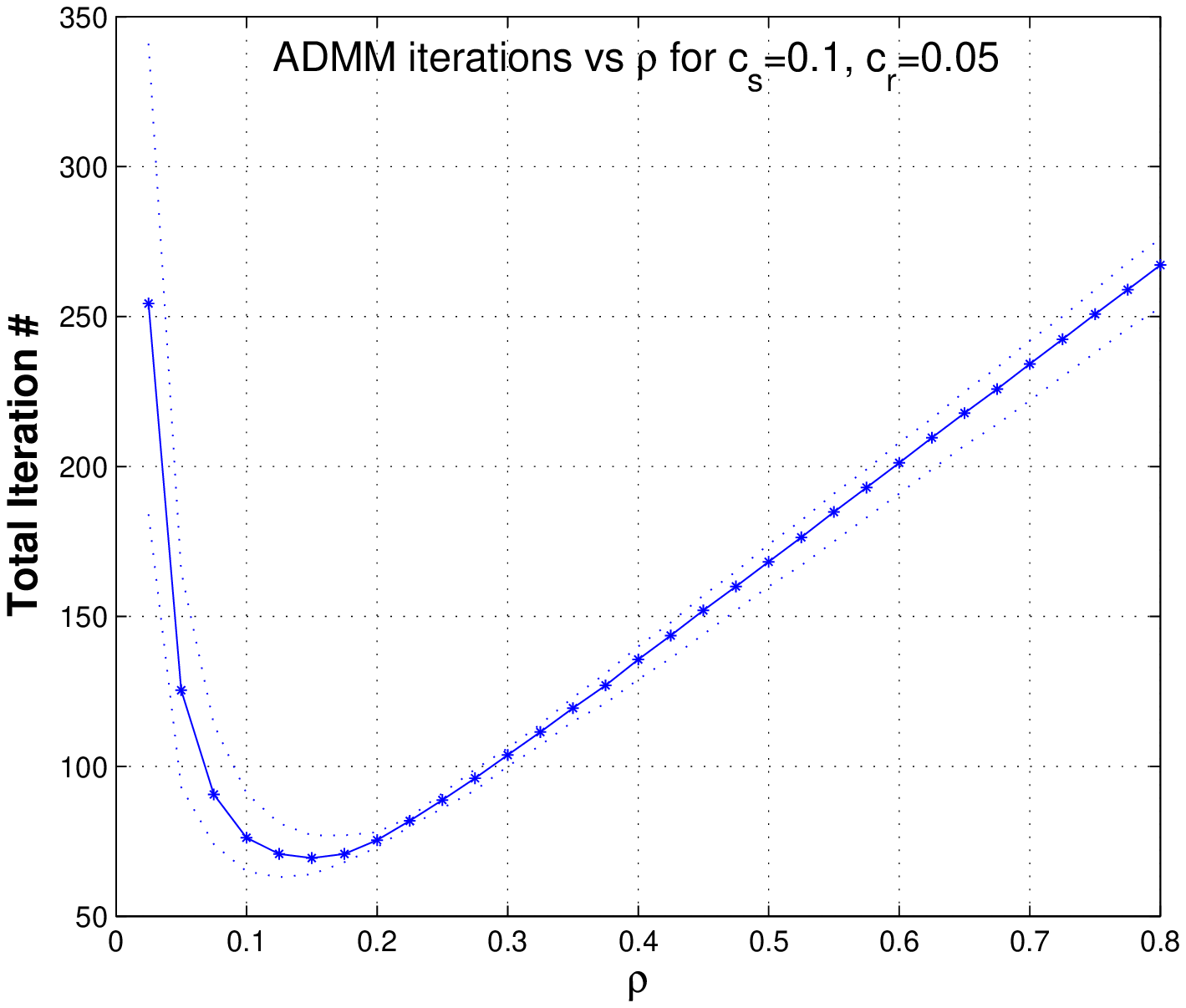}\\
\includegraphics[scale=0.45]{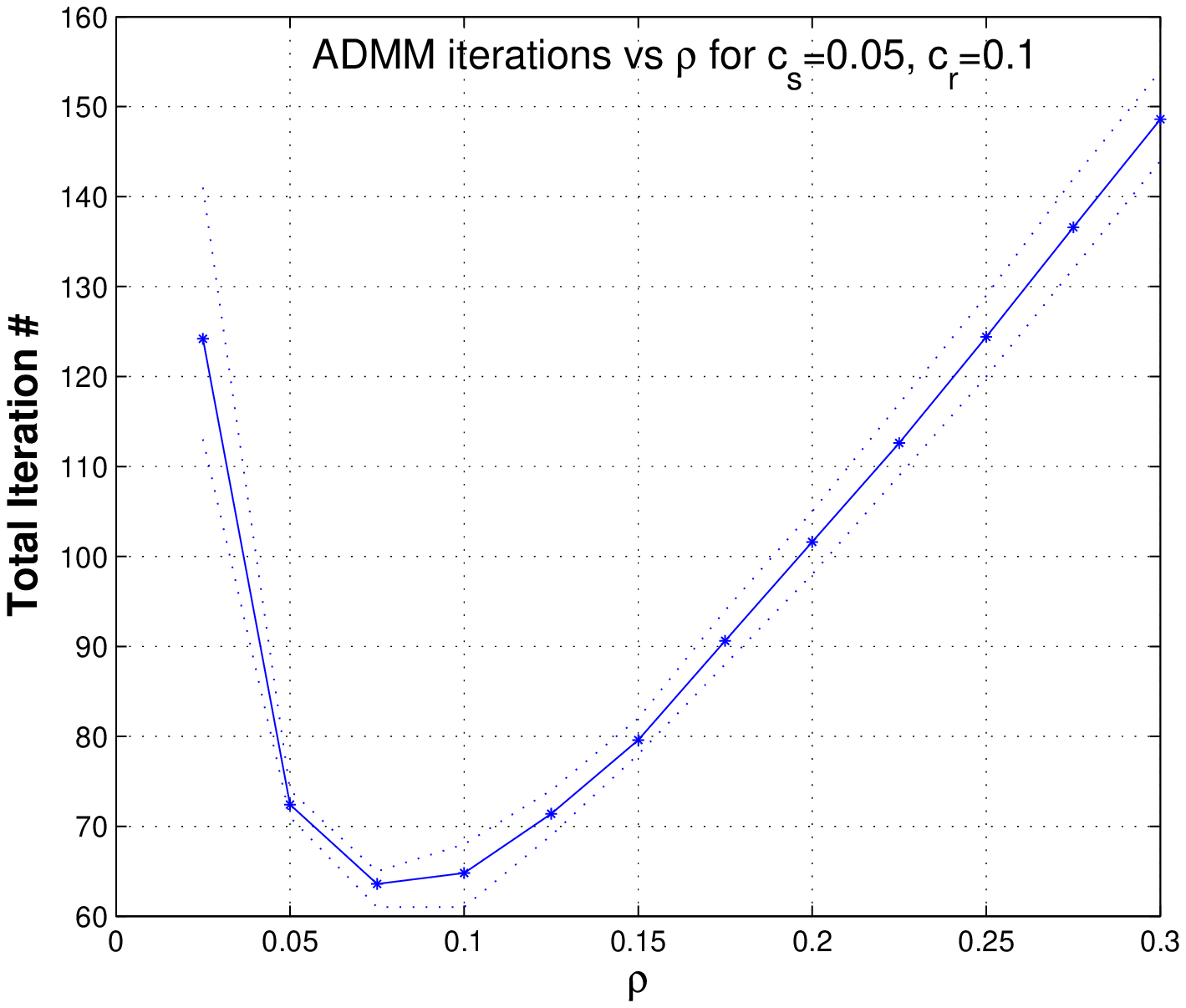}
\includegraphics[scale=0.45]{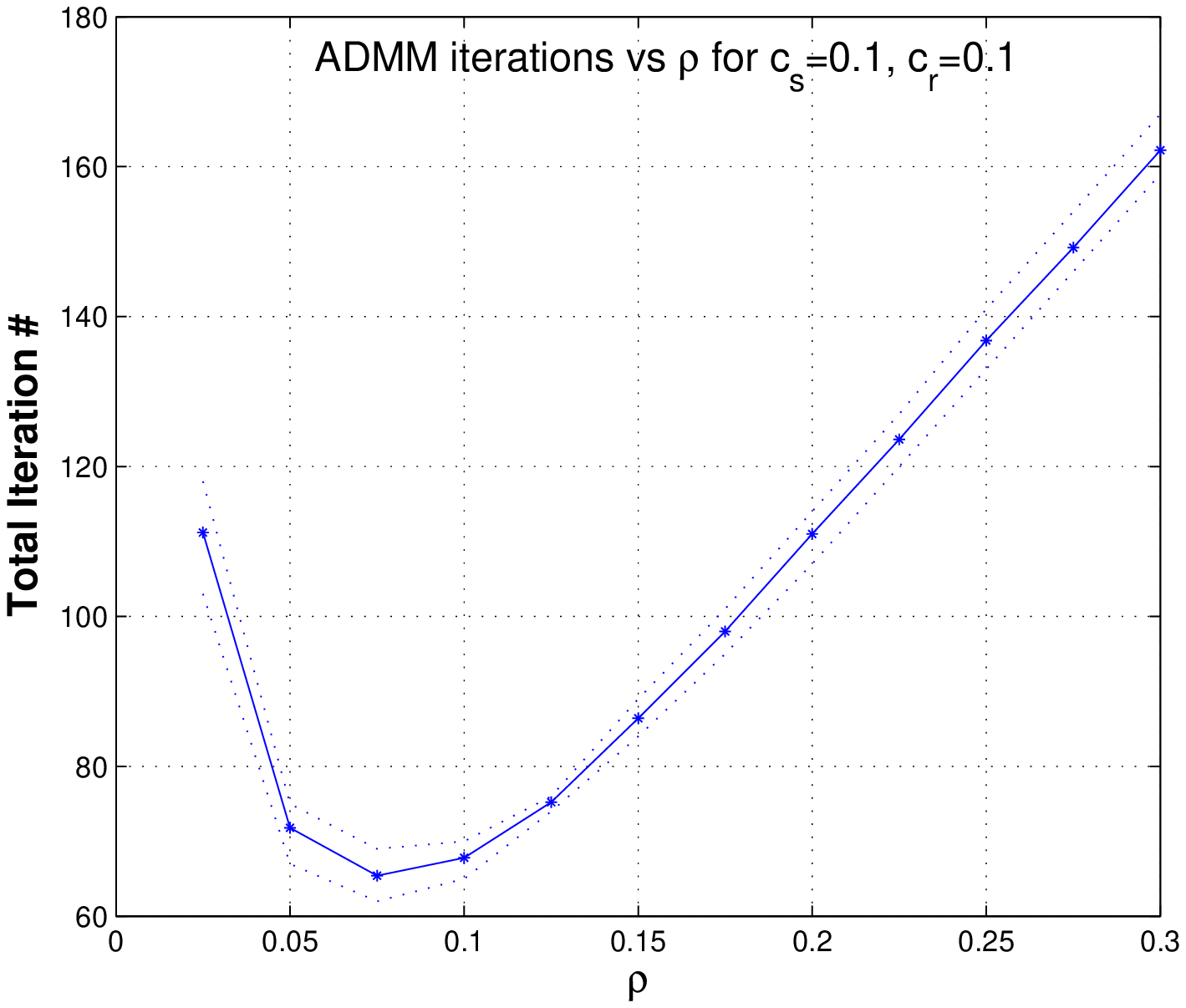}
\caption{Iteration complexity of \admm\ as a function $\rho$}
\label{fig:svd_vs_rho}
\end{figure}
We created 5 random problem instances of size $n=500$, % i.e. $D\in\reals^{n
  % \times n}$,
for each of the two choices of $c_s$ and $c_r$ such that
$\rm{SNR}=80$dB using the procedure described above in Section~\ref{sec:random_setting}. Both \admm~and
\admip~were terminated when the following primal-dual stopping condition
holds
\begin{align}
\label{eq:stopping_cond}
\frac{\norm{L_{k+1}-Z_{k+1}}_F}{\norm{D}_F}\leq \mathbf{tol}_p, \quad
      \frac{\rho_k~\norm{Z_{k+1}-Z_{k}}_F}{\norm{D}_F}\leq
            \mathbf{tol}_d.
\end{align}
See Section 3.3.1 in \cite{Boyd-etal-ADM-survey-2011} for a detailed discussion of this stopping condition.
In our experiments, we set
$\mathbf{tol}_p=\mathbf{tol}_d=8.9\times10^{-5}$ for both \admip~and
\admm. For each $c_s\in\{0.05,0.1\}$, $c_r\in\{0.05,0.1\}$, and
  penalty parameter $\rho\in\{0.025i:\ 1\leq i\leq
  50\}\subset[0.025,~1.25]$, we used \admm\ to solve $5$ random
  instances. We plot the performance of \admm\ as a function
of $\rho$ in Figure~\ref{fig:svd_vs_rho}. The solid line corresponds to
the average over the five instances, and
the dashed lines around the solid lines plot the maximum and
minimum values over the 5 random instances.
% Outside this interval the number of iterations
% required to satisfy the stopping condition \eqref{eq:stopping_cond}
% quickly blows up for our random test problems. Below we
The results of our experiments comparing \admm~with \admip~are
summarized in Table~\ref{tab:rho}. For each random problem instance,
   the reported \admm~ performance corresponds to the
  $\rho^*$ value that minimizes the number of iterations required for
  termination. The last column in Table~\ref{tab:rho} reports the range of $\rho^*$ over 5
  random instances. The column labeled \textbf{iter}
(resp. \textbf{cpu}) lists the
\emph{minimum}/\emph{\bf average}/\emph{maximum} number of total number of
iterations (resp. computation time in seconds) required to solve the $5$
instances. The columns labeled \textbf{relL} and \textbf{relS} list
the average relative error in the estimate of the low-rank component
$\norm{L^{sol}-L^0}_F/\norm{L^0}_F$ and the estimate of the sparse
component $\norm{S^{sol}-S^0}_F/\norm{S^0}_F$, respectively, where
  $(L^{sol},S^{sol})$ is the output of the particular algorithm considered. It is
clear from Table~\ref{tab:rho} that \admip~requires significantly fewer
iterations. Moreover, the range of optimal fixed penalty
  $\rho^*$ for \admm~shifts as problem parameters $c_s$ and $c_r$ change,
  making it even harder to estimate $\rho^*$. On the other hand,
  \admip~does not require tuning of any problem dependent parameter.
\vspace{-0.3cm}
%  are in the format of \emph{min}/\emph{mean}/\emph{max}
% for the \textbf{iter} and \textbf{cpu} columns, and \emph{mean} values for
% \textbf{lsv}, \textbf{relL} and \textbf{relS} columns, where \textbf{iter}
% is the total number of iteration, \textbf{lsv} is the average number of
% leading singular values computed, i.e. total number of singular values
% computed until \eqref{eq:stopping_cond} holds divided by $\mathbf{iter}$,
% \textbf{cpu} denotes the runtime of the algorithm in \emph{seconds},
% \textbf{relL} and \textbf{relS} denote the relative errors of the low-rank
% and sparse components of the solution, respectively,
% i.e.
\begin{table}[!htb]
    \begin{adjustwidth}{-2em}{-2em}
    \centering
    \caption{Comparison of \admip~and \admm}
    \renewcommand{\arraystretch}{1.1}
    {\footnotesize
    \begin{tabular}{c|c c c c c c |}
    \hline
    \textbf{Parameters}&\textbf{Algorithm}&$\mathbf{iter}$&$\mathbf{cpu}$&$\mathbf{relL}$&$\mathbf{relS}$&$\mathbf{\rho^*}$\\\hline
    \multirow{2}{*}{$\begin{array}{c}
                       \mathbf{c_s}=0.05 \\
                       \mathbf{c_r}=0.05
                     \end{array}$}
    &\textbf{ADMIP}
    & 13/\textbf{18.6}/26 & 2.1/\textbf{5.9}/11.8 & \textbf{4.7E-5} & \textbf{2.2E-4} & n/a\\ \cline{2-7}
    &ADMM
    & 68/\textbf{88.6}/101 & 16.8/\textbf{22.5}/25.1 & \textbf{3.4E-5} & \textbf{1.6E-4} & [0.15,\ 0.225]\\ \thickhline
    \multirow{2}{*}{$\begin{array}{c}
                       \mathbf{c_s}=0.1 \\
                       \mathbf{c_r}=0.05
                     \end{array}$}
    &\textbf{ADMIP}
    & 19/\textbf{20.4}/22 & 3.3/\textbf{3.6}/3.9 & \textbf{3.5E-5} & \textbf{1.3E-4} & n/a\\ \cline{2-7}
    &ADMM
    & 63/\textbf{69.2}/77 & 17.7/\textbf{20.0}/21.7 & \textbf{3.6E-5} & \textbf{1.4E-4} & [0.125,\ 0.15]\\  \thickhline
    \multirow{2}{*}{$\begin{array}{c}
                       \mathbf{c_s}=0.05 \\
                       \mathbf{c_r}=0.1
                     \end{array}$}
    &\textbf{ADMIP}
    & 14/\textbf{14}/14 & 2.2/\textbf{2.3}/2.5 & \textbf{4.9E-5} & \textbf{1.4E-4} & n/a\\ \cline{2-7}
    &ADMM
    & 61/\textbf{63}/65 & 18.3/\textbf{18.7}/19.4 & \textbf{4.8E-5} & \textbf{1.8E-4} & [0.075,\ 0.1]\\  \thickhline
    \multirow{2}{*}{$\begin{array}{c}
                       \mathbf{c_s}=0.1 \\
                       \mathbf{c_r}=0.1
                     \end{array}$}
    &\textbf{ADMIP}
    & 23/\textbf{23}/23 & 4.2/\textbf{4.2}/4.3 & \textbf{5.4E-5} & \textbf{1.6E-4} & n/a\\ \cline{2-7}
    &ADMM
    & 62/\textbf{65.4}/69 & 19.6/\textbf{21.5}/19.4 & \textbf{5.3E-5} & \textbf{1.9E-4} & [0.075,\ 0.075]\\ \hline
    \end{tabular}
    \vspace{-0.3cm}
    \label{tab:rho}
    }
    \end{adjustwidth}
\end{table}
\begin{sidewaystable}[p]
    \begin{adjustwidth}{-2em}{-2em}
    \centering
    \caption{Performance of \admip~on random test problems with missing data, SNR(D)=80dB}
    \renewcommand{\arraystretch}{1.1}
    {\footnotesize
    \begin{tabular}{c c |c c c c c |c c c c c| c c c c c|}
    \cline{3-17}
     & & \multicolumn{5}{c|}{\textbf{SR=100\%}}
     & \multicolumn{5}{c|}{\textbf{SR=90\%}}&\multicolumn{5}{c|}{\textbf{SR=80\%}}\\
    \hline
    \textbf{n}&$\mathbf{(c_s,c_r)}$
    &$\mathbf{iter}$&\textbf{lsv}&$\mathbf{cpu}$&$\mathbf{relL}$&$\mathbf{relS}$
    &$\mathbf{iter}$&\textbf{lsv}&$\mathbf{cpu}$&$\mathbf{relL}$&$\mathbf{relS}$
    &$\mathbf{iter}$&\textbf{lsv}&$\mathbf{cpu}$&$\mathbf{relL}$&$\mathbf{relS}$\\\thickhline
    \multirow{4}{*}{500}
    &(0.05,0.05) &11.6 & 35.2  &2.2 & 4.1E-5 & 1.6E-4
                 &13.2 & 35.1  &2.4 & 4.0E-5 & 1.3E-4
                 &29.0 & 78.5  &9.7 & 7.2E-5 & 4.1E-4 \\ \cline{2-17}
    &(0.1,0.05)  &17.2 & 34.8  &2.9 & 4.3E-5 & 1.8E-4
                 &17.8 & 34.8  &2.9 & 4.8E-5 & 1.7E-4
                 &19.0 & 34.7  &2.7 & 5.6E-5 & 1.6E-4 \\ \cline{2-17}
    &(0.05,0.1)  &13.0 & 58.0  &2.2 & 5.8E-5 & 1.8E-4
                 &15.6 & 58.0  &2.5 & 7.0E-5 & 1.9E-4
                 &19.8 & 58.0  &2.9 & 8.3E-5 & 2.0E-4 \\ \cline{2-17}
    & (0.1,0.1)  &21.2 & 58.0  &3.6 & 6.4E-5 & 2.2E-4
                 &23.0 & 58.0  &4.1 & 7.2E-5 & 2.2E-4
                 &25.0 & 58.0  &4.2 & 1.3E-4 & 3.6E-4 \\ \thickhline
    \multirow{4}{*}{1000}
    &(0.05,0.05) &11.0	& 61.4	& 6.7  &4.5E-5 &1.7E-4
                 &12.0  & 61.1  & 6.7  &5.4E-5 &1.6E-4
                 &14.0  & 60.6  & 6.8  &4.9E-5 &1.4E-4 \\ \cline{2-17}
    &(0.1,0.05)  &17.0	& 60.2	&11.3  &4.2E-5 &1.7E-4
                 &17.8  & 60.1  & 9.9  &4.6E-5 &1.6E-4
                 &18.8  & 60.0  & 9.3  &5.5E-5 &1.6E-4 \\ \cline{2-17}
    &(0.05,0.1)  &13.4	&105.0	& 8.5  &5.6E-5 &1.7E-4
                 &15.0  &105.0  & 7.6  &7.5E-5 &2.0E-4
                 &19.0  &105.0  & 9.3  &8.3E-5 &1.9E-4 \\ \cline{2-17}
    & (0.1,0.1)  &21.4	&105.0	&13.0  &6.3E-5 &2.2E-4
                 &23.0  &105.0  &12.0  &7.0E-5 &2.1E-4
                 &25.0  &105.0  &13.0  &8.8E-5 &2.2E-4 \\ \thickhline
    \multirow{4}{*}{1500}
    &(0.05,0.05) &11.0	&86.6	&13.2 &4.5E-5 &1.7E-4
                 &12.0  &86.2   &17.9 &5.2E-5 &1.6E-4
                 &14.0  &85.4   &17.9 &4.9E-5 &1.3E-4 \\ \cline{2-17}
    &(0.1,0.05)  &17.0	&84.6	&21.1 &4.2E-5 &1.7E-4
                 &17.6  &84.5   &26.0 &4.7E-5 &1.7E-4
                 &18.4  &84.4   &26.5 &5.9E-5 &1.7E-4 \\ \cline{2-17}
    &(0.05,0.1)  &13.4	&153.0	&22.2 &5.5E-5 &1.6E-4
                 &15.0  &153.0  &24.5 &7.2E-5 &1.9E-4
                 &19.0  &153.0  &36.3 &8.0E-5 &1.9E-4 \\ \cline{2-17}
    & (0.1,0.1)  &21.0	&153.0	&34.5 &6.3E-5 &2.2E-4
                 &23.0  &153.0  &35.6 &7.0E-5 &2.2E-4
                 &25.0  &153.0  &47.8 &8.7E-5 &2.2E-4 \\ \thickhline
    \end{tabular}
    \label{tab:missing_80dB}
    }
    \vspace{10mm}
    \centering
    \caption{Performance of \admip~on random test problems with missing data, SNR(D)=40dB}
    {\footnotesize
    \begin{tabular}{c c |c c c c c |c c c c c| c c c c c|}
    \cline{3-17}
     & & \multicolumn{5}{c|}{\textbf{SR=100\%}}
     & \multicolumn{5}{c|}{\textbf{SR=90\%}}&\multicolumn{5}{c|}{\textbf{SR=80\%}}\\
    \hline
    \textbf{n}&$\mathbf{(c_s,c_r)}$
    &$\mathbf{iter}$&\textbf{lsv}&$\mathbf{cpu}$&$\mathbf{relL}$&$\mathbf{relS}$
    &$\mathbf{iter}$&\textbf{lsv}&$\mathbf{cpu}$&$\mathbf{relL}$&$\mathbf{relS}$
    &$\mathbf{iter}$&\textbf{lsv}&$\mathbf{cpu}$&$\mathbf{relL}$&$\mathbf{relS}$\\\thickhline
    \multirow{4}{*}{500}
    &(0.05,0.05) &29.8 & 178.2  &19.2 & 6.7E-3 & 3.6E-2
                 &27.2 & 153.2  &14.6 & 6.8E-3 & 3.8E-2
                 &30.4 & 136.9  &13.8 & 7.0E-3 & 4.1E-2 \\ \cline{2-17}
    &(0.1,0.05)  &34.0 & 161.3  &19.1 & 7.5E-3 & 2.8E-2
                 &31.2 & 137.7  &14.9 & 7.6E-3 & 3.0E-2
                 &34   & 124.1  &14.8 & 7.9E-3 & 3.2E-2 \\ \cline{2-17}
    &(0.05,0.1)  &26.2 & 168.1  &14.6 & 8.1E-3 & 4.1E-2
                 &28   & 148.4  &13.4 & 8.9E-3 & 4.4E-2
                 &33   & 129.8  &13.4 & 1.0E-2 & 5.0E-2 \\ \cline{2-17}
    & (0.1,0.1)  &29.8 & 152.4  &14.9 & 9.4E-3 & 3.4E-2
                 &32   & 139.7  &15.0 & 1.0E-2 & 3.7E-2
                 &36.8 & 130.5  &15.3 & 1.2E-2 & 4.2E-2 \\ \thickhline
    \multirow{4}{*}{1000}
    &(0.05,0.05) &20.0 & 279.8 & 52.8 & 6.8E-3 & 3.6E-2
                 &21.0 & 250.5 & 48.4 & 6.8E-3 & 3.8E-2
                 &23.0 & 228.3 & 50.7 & 7.0E-3 & 4.1E-2 \\ \cline{2-17}
    &(0.1,0.05)  &25.0 & 251.8 & 62.3 & 7.6E-3 & 2.8E-2
                 &26.0 & 229.8 & 56.8 & 7.6E-3 & 3.0E-2
                 &27.0 & 200.7 & 49.9 & 7.9E-3 & 3.2E-2 \\ \cline{2-17}
    &(0.05,0.1)  &21.8 & 290.1 & 55.1 & 8.1E-3 & 4.1E-2
                 &23.0 & 255.6 & 50.5 & 8.9E-3 & 4.4E-2
                 &26.0 & 220.2 & 42.4 & 1.0E-2 & 5.0E-2 \\ \cline{2-17}
    & (0.1,0.1)  &26.8 & 269.7 & 63.0 & 9.4E-3 & 3.4E-2
                 &28.0 & 245.3 & 61.6 & 1.0E-2 & 3.6E-2
                 &29.0 & 214.1 & 48.3 & 1.2E-2 & 4.1E-2 \\ \thickhline
    \multirow{4}{*}{1500}
  &(0.05,0.05)  &20.0 &417.2 &174.0 &6.8E-3 &3.7E-2
                &21.0 &374.9 &165.0 &6.8E-3 &3.8E-2
                &21.0 &314.8 &130.4 &7.1E-3 &4.1E-2\\ \cline{2-17}
  &(0.1,0.05)   &25.0 &376.8 &198.1 &7.6E-3 &2.9E-2
                &26.0 &343.6 &189.1 &7.7E-3 &3.0E-2
                &26.0 &287.0 &148.4 &8.0E-3 &3.2E-2\\ \cline{2-17}
  &(0.05,0.1)   &22.2 &440.1 &190.0 &8.1E-3 &4.1E-2
                &23.0 &381.7 &170.2 &8.8E-3 &4.5E-2
                &26.0 &329.1 &150.6 &1.0E-2 &5.0E-2\\ \cline{2-17}
  & (0.1,0.1)   &27.0 &412.9 &211.3 &9.4E-3 &3.4E-2
                &28.0 &365.4 &204.5 &1.0E-2 &3.7E-2
                &29.0 &318.7 &164.4 &1.2E-2 &4.1E-2\\ \thickhline
    \end{tabular}
    \label{tab:missing_40dB}
    }
    \end{adjustwidth}
\end{sidewaystable}

\subsubsection{Performance of \admip\ as a function of problem parameters}
\label{sec:self_test}
Table~\ref{tab:missing_80dB} and Table~\ref{tab:missing_40dB} report the
results of the numerical experiments that we conducted to determine how
the run times and other performance measures for \admip~scale with the
problem size~$\rm{n}$, the rank of the low-rank component $\lceil{c_r
  n}\rceil$, the number of non-zero entries of the sparse component $\lceil{c_s n^2}\rceil$, the
sampling ratio $\rm{SR}$, and the $\rm{SNR}$. For this set of
experiments, we % used
% the
% stopping condition in \eqref{eq:stopping_cond} with
set the tolerances in \eqref{eq:stopping_cond} to
$\mathbf{tol_p}=\mathbf{tol_d}=1\times 10^{-4}$. %  to terminate
% \admip~iterations.

The column labeled $\mathbf{iter}$,  $\mathbf{lsv}$, $\mathbf{cpu}$,
$\mathbf{relL}$ and $\mathbf{relS}$ list, respectively, the number of iterations required
to solve the instance, the average number of leading singular values computed per iteration by
\admip, the total cpu time in second, the relative error in the low rank
component $L^{0}$, and the relative error in the low rank component
$S^{0}$, averaged over the  $5$ random
instances. Table~\ref{tab:missing_80dB} corresponds to 80dB, and
Table~\ref{tab:missing_40dB} corresponds to 40dB.
%  are reported for each choice of $\mathbf{n}$, $\mathbf{c_s}$,
% $\mathbf{c_r}$, $\mathbf{SR}$ and $\mathbf{SNR}$ values.
The results in Table~\ref{tab:missing_80dB} and
Table~\ref{tab:missing_40dB} show that the number of partial SVDs ranges from $11$ to $29$ when SNR is $80dB$, and from $20$ to $37$ when SNR is $40dB$.
%almost constant regardless of the problem dimension $\mathbf{n}$ when the
%other problem parameters are fixed.
%related to the rank and sparsity of
                                %$D$, i.e., $c_r$ and $c_p$.
Moreover, the relative error of the solution % $(L^{sol},S^{sol})$
depends only on $\rm{SNR}$ value, %  and for a fixed $\mathbf{SNR}$ value the
% relative error was
and almost independent of all the other parameters.
% almost constant for different $\mathbf{n}$,
% $\mathbf{c_s}$, $\mathbf{c_r}$ and $\mathbf{SR}$ values.

\begin{sidewaystable}[p]
    \begin{adjustwidth}{-2em}{-2em}
    \centering
    \caption{Comparison of \admip~and \asalm}
    \renewcommand{\arraystretch}{1.1}
    {\footnotesize
    \begin{tabular}{c c c |c c c c c |c c c c c |c c c c c|}
    \cline{4-18}
     & & & \multicolumn{5}{c|}{\textbf{SR=100\%}}
     & \multicolumn{5}{c|}{\textbf{SR=90\%}}&\multicolumn{5}{c|}{\textbf{SR=80\%}}\\
    \hline
    \textbf{SNR}&$\mathbf{(c_s,~c_r)}$&\textbf{Algorithm}
    &$\mathbf{iter}$&$\mathbf{lsv}$& $\mathbf{cpu}$&$\mathbf{relL}$&$\mathbf{relS}$
    &$\mathbf{iter}$&$\mathbf{lsv}$& $\mathbf{cpu}$&$\mathbf{relL}$&$\mathbf{relS}$
    &$\mathbf{iter}$&$\mathbf{lsv}$& $\mathbf{cpu}$&$\mathbf{relL}$&$\mathbf{relS}$\\\thickhline
    \multirow{8}{*}{80dB}
    &\multirow{2}{*}{$\begin{array}{c}
                       (0.05,~0.05)
                     \end{array}$}
    &\textbf{ADMIP}
    & 12	&86.2	&12.5	&3.5E-5	&1.3E-4 &13	&85.8	&12.8	&3.9E-5	&1.3E-4 &15	  &85.1	  &13.7	&4.1E-5	&1.3E-4\\ \cline{3-18}
    & &ASALM
    & 28.4	&123.9	&68.7	&4.6E-5	&4.8E-4 &29.6	&138.3	&76.9	&5.0E-5	&5.1E-4 &33.4 &146.1&50.4	&5.5E-5	&4.7E-4\\ \cline{2-18}
    &\multirow{2}{*}{$\begin{array}{c}
                       (0.1,~0.05)
                     \end{array}$}
    &\textbf{ADMIP}
    & 18	&84.4	&17.7	&3.7E-5	&1.4E-4 &18	&84.4	&17.1	&4.4E-5	&1.5E-4 &19.2 &84.2	  &16.9	&4.9E-5	&1.4E-4\\ \cline{3-18}
    & &ASALM
    & 32.4	&177.6	&109.9	&4.7E-5	&3.2E-4 &37.2	&187.1	&127.0	&4.8E-5	&2.9E-4 &42	 &194.0&83.8	&5.6E-5	&2.9E-4\\  \cline{2-18}
    & \multirow{2}{*}{$\begin{array}{c}
                       (0.05,~0.1)
                     \end{array}$}
    &\textbf{ADMIP}
    & 14.2	&153.0	&15.9	&4.9E-5	&1.4E-4 &16	&153.0	&18.6	&5.8E-5	&1.6E-4 &19	  &153.0  &20.4	&8.0E-5	&1.9E-4\\ \cline{3-18}
    & &ASALM
    & 29.2	&203.2	&86.2	&7.7E-5	&6.6E-4 &32.8	&220.0	&112.5	&8.6E-5	&6.6E-4&41	 &228.4	&79.1	&9.3E-5	&5.6E-4\\  \cline{2-18}
    & \multirow{2}{*}{$\begin{array}{c}
                       (0.1,~0.1)
                     \end{array}$}
    &\textbf{ADMIP}
    & 21	&153.0	&26.0	&6.3E-5	&2.2E-4 &23	&153.0	&26.5	&7.0E-5	&2.2E-4 &25	  &153.0  &27.1	&8.7E-5	&2.2E-4\\ \cline{3-18}
    & &ASALM
    & 34.8	&272.0	&148.4	&8.0E-5	&4.6E-4 &43	&282.5	&197.1	&8.3E-5	&3.9E-4&55	  &285.6	&138.5	&9.5E-5	&3.6E-4\\ \thickhline
    \multirow{8}{*}{40dB}
    &\multirow{2}{*}{$\begin{array}{c}
                       (0.05,~0.05)
                     \end{array}$}
    &\textbf{ADMIP}
    &7	&89.9	&10.5	&3.5E-3	&1.4E-2&8	 &88.8	&7.7	&3.7E-3	&1.5E-2&8	&88.8	&7.7	&4.3E-3	&1.6E-2\\ \cline{3-18}
    & &ASALM
    &15	&205.3	&42.1	&4.6E-3	&3.0E-2&18	&210.3	&45.1	&5.1E-03	&3.3E-02&20	  &207.1	&45.8	&5.8E-3	&3.7E-2\\ \cline{2-18}
    &\multirow{2}{*}{$\begin{array}{c}
                       (0.1,~0.05)
                     \end{array}$}
    &\textbf{ADMIP}
    &9	&87.9	&12.1	&3.8E-3	&1.5E-2&9.8 &87.3	&9.1	&4.1E-3	&1.5E-2&10	&87.2	&9.2	&4.7E-3	&1.6E-2\\ \cline{3-18}
    & &ASALM
    &20	&292.2	&78.4	&6.1E-3	&2.7E-2&24	&296.6	&81.4	&6.8E-03	&2.9E-02&28	  &285.5	&85.5	&7.4E-3	&3.1E-2\\  \cline{2-18}
    & \multirow{2}{*}{$\begin{array}{c}
                       (0.05,~0.1)
                     \end{array}$}
    &\textbf{ADMIP}
    &8	&153.0	&12.5	&5.1E-3	&1.9E-2&8.2 &153.0	&9.0	&6.0E-3	&2.1E-2&9	&153.0	&9.4	&7.6E-3	&2.5E-2\\ \cline{3-18}
    & &ASALM
    &16	&267.3	&47.1	&5.7E-3	&3.2E-2&20	&280.5	&53.5	&6.9E-03	&3.7E-02&24	  &289.7	&65.0	&8.2E-3	&4.0E-2\\  \cline{2-18}
    & \multirow{2}{*}{$\begin{array}{c}
                       (0.1,~0.1)
                     \end{array}$}
    &\textbf{ADMIP}
    &9	&153.0	&14.6	&6.1E-3	&2.0E-2&10	 &153.0	&10.9	&6.9E-3	&2.2E-2&11	&153.0	&11.9	&8.2E-3	&2.5E-2\\ \cline{3-18}
    & &ASALM
    &23	&364.6	&96.7	&7.0E-3	&2.9E-2&28	&373.5	&102.1	&7.8E-03	&3.1E-02 &35.8 &370.7	&124.1	&8.9E-3	&3.2E-2\\ \thickhline
    \end{tabular}
    \label{tab:asalm_comparison}
    }
    \end{adjustwidth}
\end{sidewaystable}
\subsubsection{\asalm~vs \admip}
We created 5 random problem instances of size $n=500$, % i.e. $D\in\reals^{n
  % \times n}$,
for each of the two choices of $c_s$, $c_r$, $\rm{SNR}$ and $\rm{SR}$ using the procedure described in Section~\ref{sec:random_setting}; and we compared \admip~with \asalm~\cite{Tao09_1J} on these random problems.
%The code for \proc{ASALM} was obtained from the authors of~\cite{Tao09_1J}.
In these numerical
tests, % , instead of using \eqref{eq:stopping_cond},
we set $\mathbf{tol}=0.05$, and terminated
\admip~using % a practical
the stopping condition
\vspace{-0.25cm}
\begin{align}
\label{eq:stopping_cond_practical}
\frac{\norm{(L_{k+1},S_{k+1})-(L_{k},S_{k})}_F}{\norm{(L_{k},S_{k})}_F+1}\leq
\mathbf{tol}~\varrho.
\end{align}
% where $\mathbf{tol}=0.05$. %and $N^0_{ij}$ are i.i.d. such that
%                            % $N^0_{ij}\sim \varrho\mathcal{N}(0,1)$ for all
%                            %$i=1,\ldots,m$ and $j=1,\ldots,n$.
% On the other hand, w
We terminated \asalm\ either when it computed a solution
with a smaller relative error compared to the \admip\ solution for the same
problem instance or when an iterate satisfied
\eqref{eq:stopping_cond_practical}. Note that this experimental setup
favors \asalm\ over \admip.
% To be more specific, let
% $D=L^0+S^0+N^0$ be generated as discussed above. For a given iterate
% $(\tilde{L},\tilde{S})$, we denote its relative error with
% $\mathbf{relL}(\tilde{L})=\norm{\tilde{L}-L^0}_F/\norm{L^0}_F$ and
% $\mathbf{relS}(\tilde{S})=\norm{\tilde{S}-S^0}_F/\norm{S^0}_F$. Suppose
% that the relative errors of the low-rank and sparse components of the
% \admip~solution are $r_L$ and $r_S$, respectively. Suppose that the
% condition \eqref{eq:stopping_cond_practical} is not satisfied within the
% first $K$ iterations, if $(K+1)$-th \asalm~iterate, $(L_{K+1},S_{K+1})$,
% is the first one to satisfy the condition
% \begin{align}
% \label{eq:stopping_cond_2}
% \mathbf{relL}(L_{K+1})\leq r_L \quad \hbox{and} \quad
% \mathbf{relS}(S_{K+1})\leq r_S,
% \end{align}
% then we let \asalm~return $(L_K,S_K)$ to be fair in terms of run time
% comparison, i.e., all the run times reported for \asalm~are the cpu times
% required for either \eqref{eq:stopping_cond_practical} or
% \eqref{eq:stopping_cond_2} to hold, whichever comes first.
% The comparison results are displayed in
% Table~\ref{tab:asalm_comparison}.
The results for the two algorithms are displayed in
Table~\ref{tab:asalm_comparison}, where the reported statistics \textbf{iter}, \textbf{cpu}, \textbf{lsv}, \textbf{relL}, and \textbf{relS} are defined in Section~\ref{sec:self_test}.
%As before,  \textbf{iter} denotes the average number of iterations
%required to solve the $5$ random instances, \textbf{cpu} denotes the
%average cpu times, \textbf{lsv} denotes the average number of leading
%singular values computed by the algorithm,  \textbf{relL} denotes the
%relative error in the low rank component $L^0$,  and \textbf{relS} denotes the
%relative error in the sparse component $S^0$.
From the results in
Table~\ref{tab:asalm_comparison},
we see that for
all of the problem classes, \asalm\ requires about \emph{twice} as many
iterations for convergence. But, the cpu time for \asalm~is considerably
larger; this difference can
be explained by the fact that on average \asalm\ computes a larger number
of leading singular values per iteration as compared to \admip.
This is clear from the
% more leading singular value
% computations than \admip~did per partial SVD, which can be seen in the
\textbf{lsv} statistics reported for both
algorithms. The results in Table~\ref{tab:asalm_comparison} also show
that although the
relative errors in the low-rank and sparse components produced by \admip~
and \asalm~were of the same order, the error of \admip~solutions
were consistently lower than those of the \asalm~solutions.
\subsection{Foreground detection problem}
\label{sec:video_test_results}
\begin{figure} [h!]
    \centering
    \mbox{\hspace{4mm}$D(t)$:}
    \includegraphics[scale=0.6]{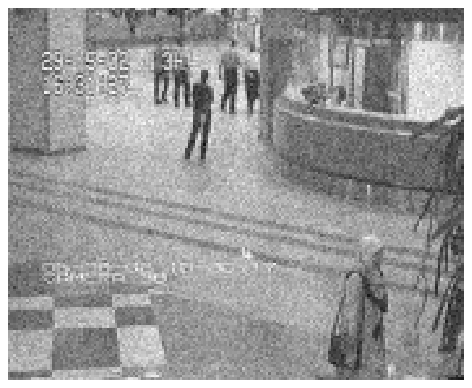}
    \includegraphics[scale=0.6]{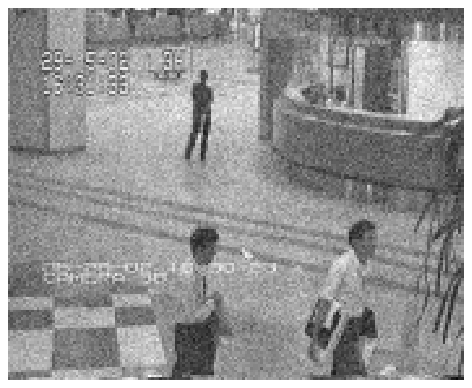}
    \includegraphics[scale=0.6]{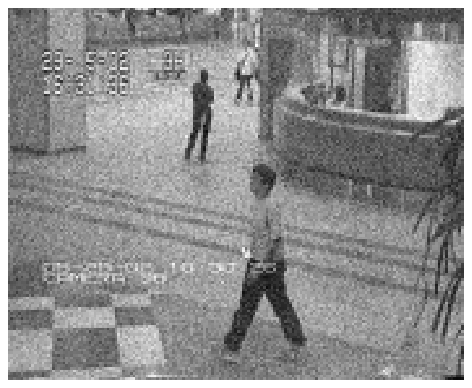}\\
    \mbox{\hspace{1mm}$L^{sol}(t)$:}
    \includegraphics[scale=0.6]{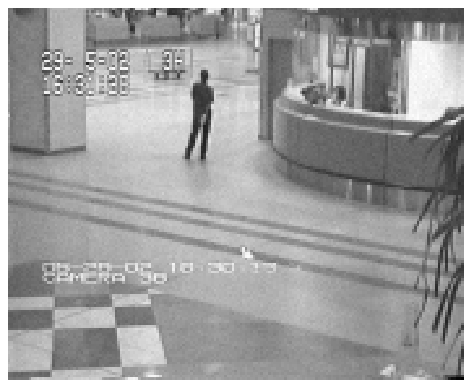}
    \includegraphics[scale=0.6]{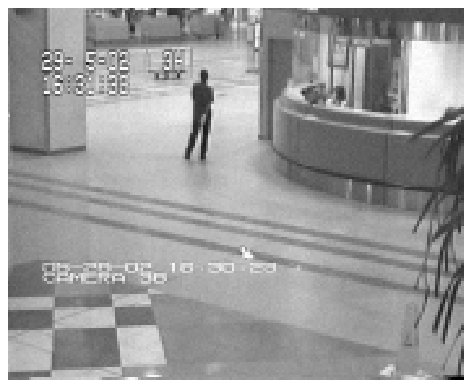}
    \includegraphics[scale=0.6]{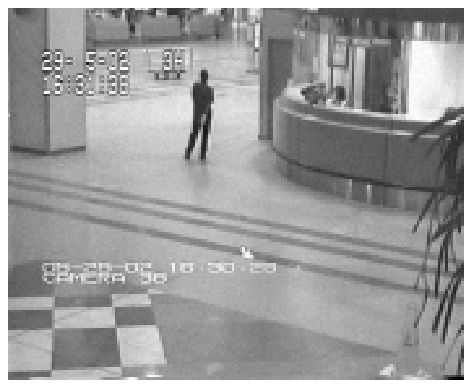}\\
    \mbox{$S^{sol}(t)$: }
    \includegraphics[scale=0.6]{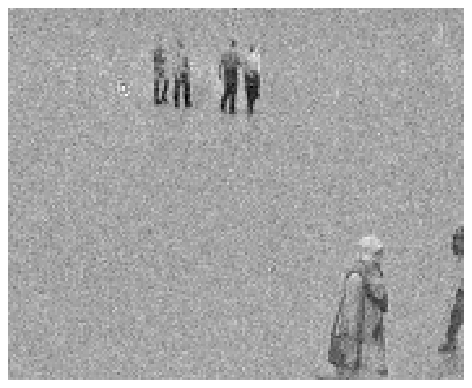}
    \includegraphics[scale=0.6]{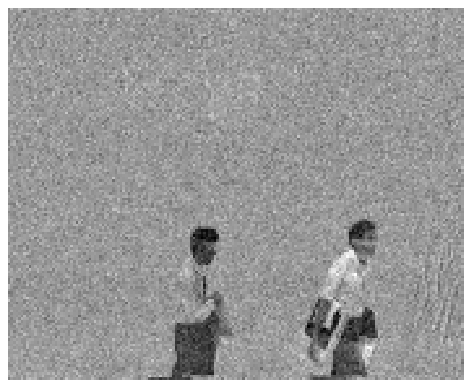}
    \includegraphics[scale=0.6]{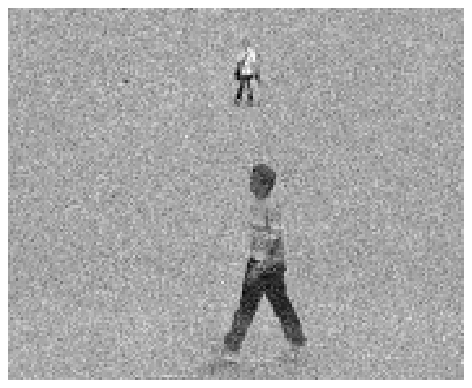}\\
    \mbox{\hspace{-1mm}$S_{post}^{sol}(t)$: }
    \includegraphics[scale=0.6]{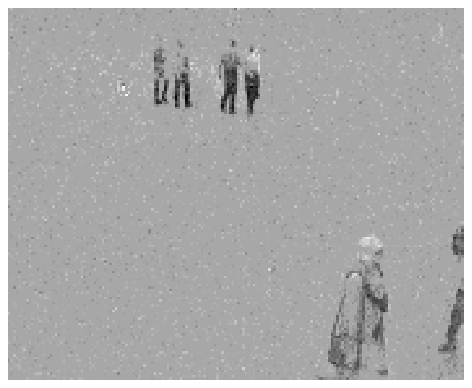}
    \includegraphics[scale=0.6]{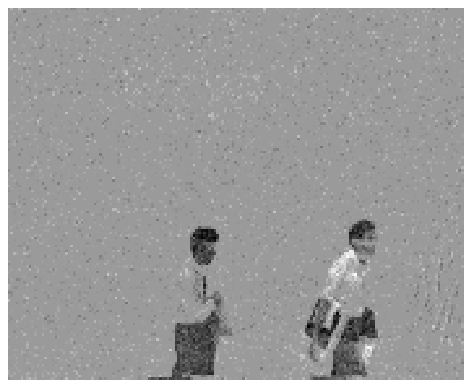}
    \includegraphics[scale=0.6]{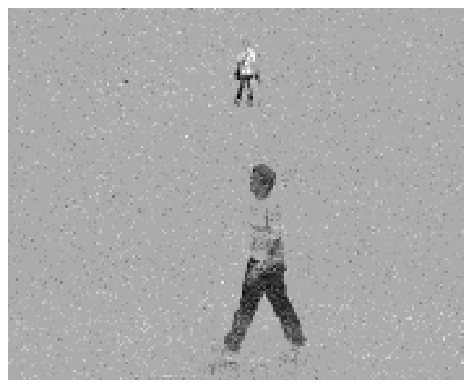}
    \caption{Background extraction from a video with $\mathbf{SNR}=20$dB and $\mathbf{SR}=100\%$ using \admip}
    \label{fig:noisy_reconstruction_test_pspg}
\end{figure}
\begin{figure} [h!]
    \centering
    \mbox{\hspace{4mm}$D(t)$:}
    \includegraphics[scale=0.6]{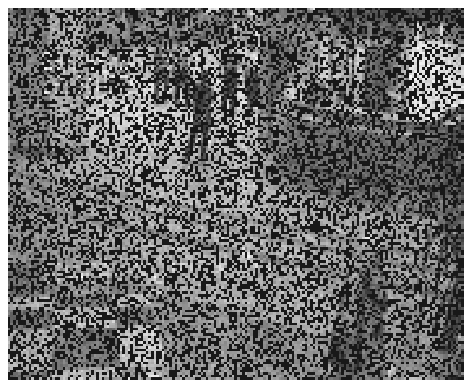}
    \includegraphics[scale=0.6]{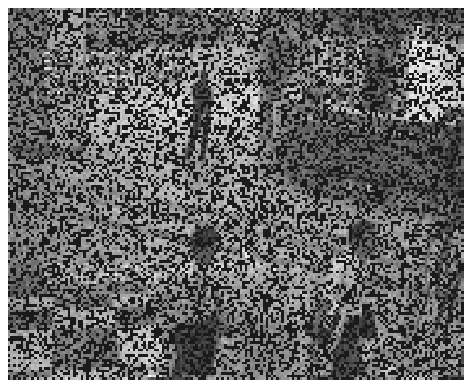}
    \includegraphics[scale=0.6]{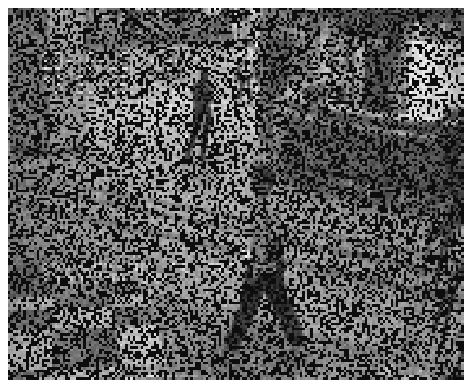}\\
    \mbox{\hspace{1mm}$L^{sol}(t)$:}
    \includegraphics[scale=0.6]{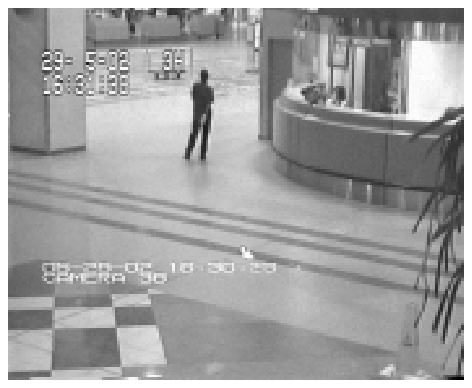}
    \includegraphics[scale=0.6]{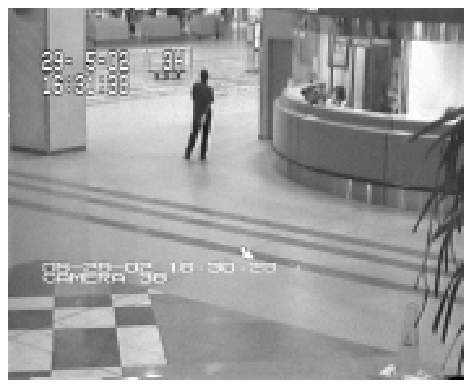}
    \includegraphics[scale=0.6]{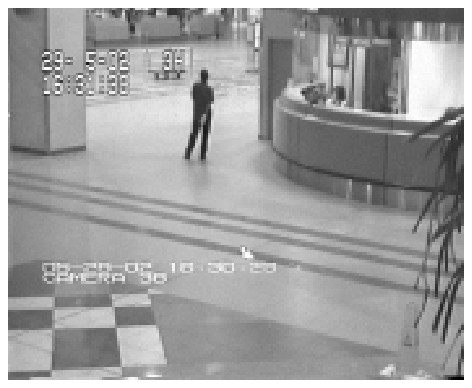}\\
    \mbox{$S^{sol}(t)$: }
    \includegraphics[scale=0.6]{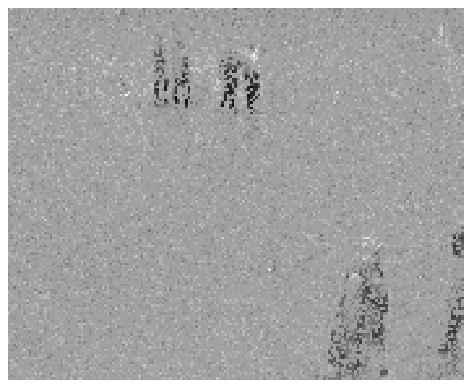}
    \includegraphics[scale=0.6]{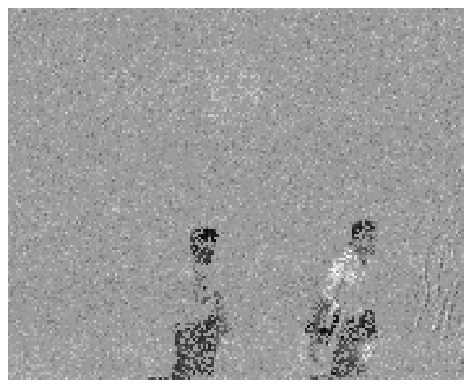}
    \includegraphics[scale=0.6]{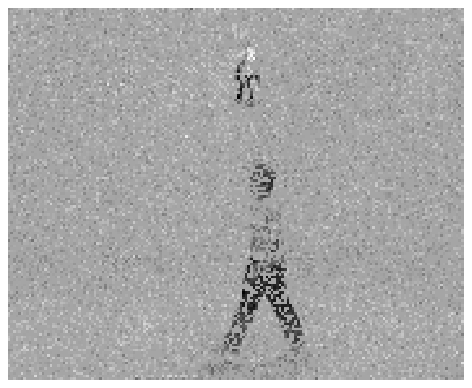}\\
    \mbox{\hspace{-1mm}$S_{post}^{sol}(t)$: }
    \includegraphics[scale=0.6]{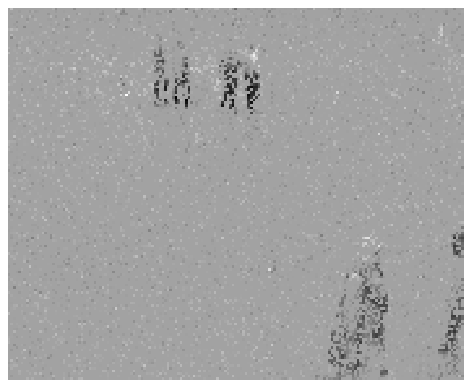}
    \includegraphics[scale=0.6]{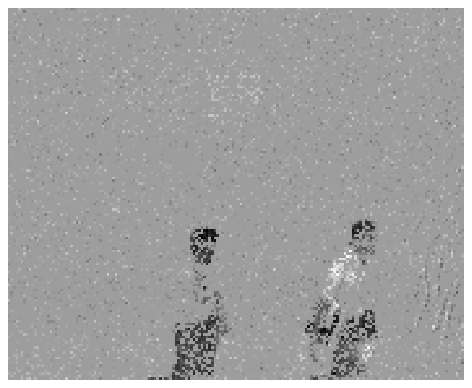}
    \includegraphics[scale=0.6]{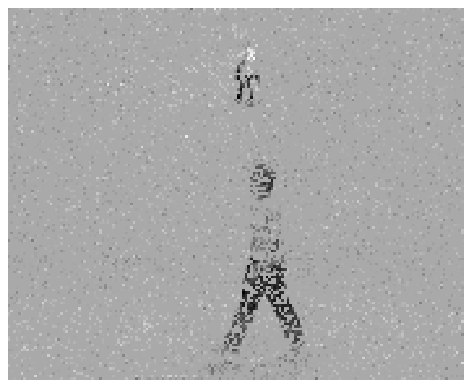}
    \caption{Background extraction from a video with $\mathbf{SNR}=20$dB and $\mathbf{SR}=60\%$ using \admip}
    \label{fig:noisy_reconstruction_test_pspg_2}
\end{figure}

Extracting the almost still background from a sequence of frames in a
noisy video is an important task in video surveillance, and it can be formulated as SPCP problem. %This problem is difficult due to the presence of moving foreground in the video. % Interestingly, this
%We show that the background can be extracted by solving an appropriately defined SPCP problem.
Let $X_t$ denote the $t$-th video frame, and $x_t\in\reals^R$ is obtained by stacking the columns of $X_t$, where $R$ is the resolution. Suppose the
background is completely stationary, and there is no measurement noise.
Then $x_t = b + f_t$, where $b$
denotes the background and $f_t$ denotes the sparse foreground in the $t$-th
frame. Let $D = [x_1,
\ldots, x_T] = b\ones^\top + [f_1, \ldots, f_T]$, i.e.
%summation of a
rank 1 matrix + sparse matrix.
% Note that by stacking the columns of each frame into a long
% vector, we can get a matrix $D$ whose columns correspond to the sequence
% of frames of the video.
In real videos, the background is never completely stationary, and there
is always measurement noise; therefore, we expect that
$D$ can be decomposed into the sum of
three matrices $D= L^0+S^0+N^0$, where $L^0$ is a low rank and $S^0$ is a sparse matrix that
represent the background and the
foreground, respectively, and $N^0$ is a dense noise matrix.% The
% matrix $L^0$, which represents the backgrounds in the frames, should be of
% low rank due to the correlation between frames. The matrix $S^0$, which
% represents the moving foregrounds in the frames, should be sparse since
% the foreground usually occupies a small portion of each frame.
\begin{table}[!ht]
    \begin{adjustwidth}{-2em}{-2em}
    \centering
    \caption{\admip~vs \asalm: Recovery statistics for foreground detection on a noisy video, $\mathbf{SNR}=20$dB}
    \renewcommand{\arraystretch}{1.75}
    {\scriptsize %\footnotesize
    \begin{tabular}{c c c c|c c c|c c c|}
    \cline{2-10}
    &\multicolumn{3}{|c|}{ASALM}&\multicolumn{3}{c|}{$\mathbf{ADMIP}~(\kappa=1.5)$}&\multicolumn{3}{c|}{$\mathbf{ADMIP} ~(\kappa=1.25)$}\\ \hline
    \multicolumn{1}{c|}{$\mathbf{SR}$}& $\mathbf{svd}$ & $\mathbf{lsv}$ & $\mathbf{cpu}$ & $\mathbf{svd}$ & $\mathbf{lsv}$ & $\mathbf{cpu}$ & $\mathbf{svd}$ & $\mathbf{lsv}$ & $\mathbf{cpu}$\\ \thickhline
    \multicolumn{1}{c|}{\textbf{100\%}} & 91 & 64.7 &198.8 & 16 & 142.5 &105.9 & 26 & 63.3 & 192.2\\ \hline
    \multicolumn{1}{c|}{\textbf{60\%}}  & 154 & 6.5 & 152.2 & 15 & 15.6 & 63.2& 24 & 14.8 & 110.3\\ \thickhline
    \end{tabular}
    \vspace{-0.2cm}
    \label{tab:compare_video}
    }
    \end{adjustwidth}
\end{table}

We used \admip~and \asalm~to extract % moving objects
the foreground in an airport surveillance
video consisting of $T=201$
grayscale $144 \times 176$  frames~\cite{Li04_1J}, i.e $R=25,344$.
 % which is a sequence of.
 % We represented the original video as a
% data matrix $D \in \reals^{(144\times 176)\times 201}$, where the $i$-th
% column of $D$ was constructed by stacking the columns of the $i$-th frame
% into a long vector.
In order to test the reconstruction performance of
both algorithms under missing data, we created a test video
by masking some of the pixels, i.e. we assumed that the sensors
corresponding to these positions were malfunctioning, and therefore, not
acquiring the signal. We also injected artificial white noise to the
remaining pixels in order to create a video with prescribed $\rm{SNR}$.
%  % that makes them shown as black,
% and injecting artificial white noise
% to the remaining pixels of the original % airport security
% video. % In
% particular, suppose that we only observe pixels corresponding to
Let $\rm{SR}$ denote the fraction of observed pixels. The locations $\Omega$ of
the observed pixels were chosen uniformly at random from the set
$\{1,\ldots,T\}\times\{1,\ldots,R\}$ such that the cardinality
$\abs{\Omega} = \lceil\rm{SR}~T~R\rceil$.
% denote the subset of
% observed pixels.
%  --this can happen if
% the sensors corresponding to pixels in $\Omega^c$ do not work, w
We created a noisy test video with $\rm{SNR}=20$dB by setting $\varrho =
\norm{\pi_\Omega(D)}_F/(\sqrt{|\Omega|}~10^{\rm{SNR}/20})$, and then for
all $(i,j)\in\Omega$ by resetting $D_{ij} = D_{ij} + N_{ij}$, % to each component $D_{ij}$ of the
% data matrix,
where each  $N_{ij}$ were independently drawn from a Gaussian
distribution with mean zero and variance $\varrho^2$.
% $\varrho\cN(0,1)$.
\admip~and \asalm~were terminated according to
\eqref{eq:stopping_cond_practical}, where $\mathbf{tol}$ is
$5\times10^{-6}$ for both \admip~and \asalm. %$1\times10^{-4}$ for \asalm, respectively, in order to obtain similar visual quality in both reconstructions.

We compared the performance of \admip~with \asalm~on the video problem
with full data $\rm{SR}=100\%$,
% i.e. $\Omega=\{1,\ldots,n_1\}\times\{1,\ldots,n_2\}$, and when
and with partial data $\rm{SR}=60\%$. On each problem instance, we ran
  \admip~ with $\kappa=1.5$ and $\kappa=1.25$, where $\kappa$ is the
  parameter that controls of the rate of growth of $\rho_k$ in
  \eqref{eq:rho_kappa}. % i.e. $|\Omega|=\lfloor0.60 n_1n_2\rfloor$.
% Let
% $(L^{sol},S^{sol})$ denote the variables corresponding to the low-rank and
% sparse components of $D$, respectively, when the
% terminates.
The frames recovered by
\asalm~were very similar to those of \admip~due to same stopping condition used; therefore, we only show the
frames recovered by \admip. The first rows in
Figure~\ref{fig:noisy_reconstruction_test_pspg} and
Figure~\ref{fig:noisy_reconstruction_test_pspg_2}
display the $35$-th, $100$-th and $125$-th frames of the noisy
surveillance video~\cite{Li04_1J} for $SR=100\%$ and $SR=60\%$,
respectively. The second and third
rows display the recovered background and foreground images of the
selected frames, respectively, using \admip. Both \admip\ and
\asalm\ were able to recover the foreground and the background fairly
accurately with only $60\%$ of the pixels functioning.
Even though the visual
quality of recovered background and foreground are very similar for
both algorithms, the statistics reported in Table~\ref{tab:compare_video}
shows that both iteration count and cpu time of \admip~are % significantly
smaller than those of \asalm. Note that, although \admip\ with $\kappa = 1.5$ has the least cpu
  time, the values for the
  $\mathbf{lsv}$
  statistic for \admip~with $\kappa=1.5$ is significantly higher % than the
                                % $\mathbf{lsv}$
  than the corresponding values for \asalm~and \admip~with
  $\kappa=1.25$. %This disparity can be explained as follows:
  Indeed, for large problem sizes, \admip\ has two different computational bottleneck. The first one is the computation of the low rank term $L_{k+1}$. For larger
  values of $\kappa$, the
  parameter $\rho_k$ grows faster; therefore, it
  follows from \eqref{eq:subproblem_L} that the number of
  leading singular values computed in each iteration grows.
  % However, large $\kappa$ results in
  % fewer iterations.
  On the other hand, in order to compute $S_{k+1}$, we need to
  % important to note
  % that at each iteration we
  sort $|\Omega|$ numbers. % with  $\cO(|\Omega|\log(|\Omega|))$
                           % complexity;
  This sorting operation with $\cO(|\Omega|\log(|\Omega|))$
  complexity becomes a computational bottleneck when $|\Omega|$ is large, especially when
  $\rm{SR}=100\%$. Moreover, large values for $\kappa$ reduces the number of iterations,
  and consequently, the number of sortings required. From the numerical
  experiments, it appears that the sorting is a computationally more critical
  step; therefore, $\kappa=1.5$ reduces the overall cpu time in comparison to $\kappa=1.25$.
% Therefore, although
  % choosing a larger $\kappa$ value will increase $\mathbf{lsv}$, the cpu
  % time goes down due to decrease in the number of iterations, leading to
  % fewer number of sorting operations.

In our preliminary numerical experiments, we noticed that the
recovered background frames are almost noise free even when the input
video was very noisy, and all the
noise shows up in the recovered foreground images. This was
observed for both \admip~and \asalm. Hence, in order to
eliminate the noise seen in the recovered foreground frames and
enhance the quality of the recovered frames, we post-process
$(L^{sol},S^{sol})$ of \admip~ as follows:
\begin{align}
\label{eq:postprocess}
S_{post}^{sol}:=\argmin_S\{\norm{S}_1:~\norm{S+L^{sol}-D}_F\leq\delta\}.
\end{align}
The fourth rows of Figure~\ref{fig:noisy_reconstruction_test_pspg} and Figure~\ref{fig:noisy_reconstruction_test_pspg_2} show the post-processed foreground frames.
\section{Conclusions}
In this paper, we propose an alternating direction method of
  multipliers with increasing penalty parameter sequence, \admip, for
  solving stable PCA problems. We prove that primal-dual iterate
sequence converges to an optimal pair when the sequence of penalty parameters
$\{\rho_k\}$ in \emph{non-decreasing}, and \emph{unbounded}. We also report numerical results comparing
\admip~with constant
penalty \admm~on synthetic random test problems and on
foreground-background separation problems. %  arising from
% background separation from surveillance video are reported.
The results clearly
show that \admip~is able to solve huge problems involving million
variables much more effectively when compared to the constant penalty
\admm. To the best of
our knowledge, % this is the first convergence result for a
\admip~is the first
variable penalty
\admm~that is guaranteed to converge to a primal-dual optimal pair when
penalties are not bounded, the objective function is
non-smooth and its subdifferential is not uniformly
bounded. However, % it should be emphasized that the
the proof of convergence of \admip~iterates heavily leverages the problem structure. In future work, % it would be
% interesting to prove convergence of variable penalty \admm~on
we plan to extend \admip~to solve a more general set of convex optimization
problems of the form $\min\{f(x)+g(y):\ Ax+By=b\}$, where $f$ and $g$ are non-smooth closed convex functions, and investigate the growth rate conditions on \emph{unbounded} $\{\rho_k\}$ that guarantee primal and dual convergence.

\section{Acknowledgements}
We would like to thank to Min Tao for providing the code \asalm.
\appendix
\section{Proofs}
\subsection{Proof of Lemma~\ref{lem:subproblem}}
\label{app:proof-1}
Suppose $\delta>0$. Let $(Z^*,S^*)$ be an optimal solution to
problem $(P_{ns})$, $\theta^*$ denote the optimal Lagrangian
multiplier for the constraint $(Z,S)\in\chi$ written as
$\frac{1}{2}\norm{\proj{Z+S-D}}^2_F\leq \frac{\delta^2}{2}$ and
$\pi^*_\Omega$ denotes the adjoint operator of $\pi_\Omega$. Note that
$\pi^*_\Omega=\pi_\Omega$. Then the KKT conditions for this
problem are given by
\begin{eqnarray}
    Q+\rho(Z^*-\tilde{Z})+\theta^*~\proj{Z^*+S^*-D}& =& 0, \label{condition1}\\
    \xi G + \theta^*~\proj{Z^*+S^*-D}& =& 0, \quad   G\in\partial\norm{S^*}_1, \label{condition2}\\
    \norm{\proj{Z^*+S^*-D}}_F& \leq &\delta, \label{condition3}\\
    \theta^* & \geq&  0, \label{condition4}\\
    \theta^*~(\norm{\proj{Z^*+S^*-D}}_F-\delta)&=&0, \label{condition5}
  \end{eqnarray}
where \eqref{condition1} and \eqref{condition2} follow from the fact
that  $\pi_\Omega \pi_\Omega=\pi_\Omega$.

From \eqref{condition1} and \eqref{condition2}, we get
\begin{align}
\label{eq:complement-components}
\projc{Z^*}=\projc{q(\tilde{Z})}, \quad \projc{G}=\mathbf{0}
\end{align}
and
\begin{eqnarray}
\left[
  \begin{array}{cc}
    (\rho+\theta^*)I &  \theta^*I\\
    \theta^*I & \theta^*I \\
  \end{array}
\right]
\left[
  \begin{array}{c}
    \proj{Z^*} \\
    \proj{S^*} \\
  \end{array}
\right]
=
\left[
  \begin{array}{c}
    \proj{\theta^*~D+\rho~q(\tilde{Z})} \\
    \proj{\theta^*~D-\xi G}\\
  \end{array}
\right], \label{eq:FTOC_1}
\end{eqnarray}
where $q(\tilde{Z})=\tilde{Z}-\rho^{-1}~Q$. From \eqref{eq:FTOC_1} it follows that
\begin{eqnarray}
\left[
  \begin{array}{cc}
    (\rho+\theta^*)I &  \theta^*I\\
    0 & \left(\frac{\rho\theta^*}{\rho+\theta^*}\right)~I \\
  \end{array}
\right]
\left[
  \begin{array}{c}
    \proj{Z^*} \\
    \proj{S^*} \\
  \end{array}
\right]
=
\left[
  \begin{array}{c}
    \proj{\theta^*~D+\rho~q(\tilde{Z})} \\
    \frac{\rho\theta^*}{\rho+\theta^*}~\proj{D-q(\tilde{Z})}-\xi \proj{G}\\
  \end{array}
\right]. \label{eq:FTOC_2}
\end{eqnarray}
From the second equation in \eqref{eq:FTOC_2}, we  get
\begin{align}
\xi\frac{(\rho+\theta^*)}{\rho\theta^*}~\proj{G}+\proj{S^*}+\proj{q(\tilde{Z})-D}=0. \label{eq:shrinkS}
\end{align}
The equation \eqref{eq:shrinkS} and $\projc{G}=\mathbf{0}$ are
precisely the first-order optimality conditions for the ``shrinkage"
problem
\eq
\min_{S\in\reals^{m\times
     n}}\left\{\xi\frac{(\rho+\theta^*)}{\rho\theta^*} \norm{S}_1+\frac{1}{2}\norm{S+\proj{q(\tilde{Z})-D}}_F^2\right\}.
\en
The expression for $S^*$ in \eqref{lemeq:S}
is the optimal solution to this ``shrinkage" problem, and  $Z^*$ given in \eqref{lemeq:Z} follows from the first equation in
\eqref{eq:complement-components} and the first row of \eqref{eq:FTOC_2}. Hence, given optimal Lagrangian dual $\theta^*$,
$S^*$ and $Z^*$ computed from equations \eqref{lemeq:S} and \eqref{lemeq:Z}, respectively, satisfy KKT conditions \eqref{condition1} and \eqref{condition2}.

Next, we show how to compute the optimal dual $\theta^\ast$. We
consider two cases.
\begin{enumerate}[(i)]
\item Suppose $\norm{\pi_\Omega\left(D-q(\tilde{Z})\right)}_F\leq\delta$. In this case, let $\theta^*=0$. Setting $\theta^*=0$ in \eqref{lemeq:S} and \eqref{lemeq:Z}, we find $S^*=\mathbf{0}$ and $Z^*=q(\tilde{Z})$. By construction, $S^*$, $Z^*$ and $\theta^*$ satisfy conditions \eqref{condition1} and \eqref{condition2}. It is easy to check that this choice of $\theta^*=0$ trivially satisfies the rest of the conditions as well. Hence, $\theta^*=0$ is an optimal lagrangian dual.
\item Next, suppose
  $\norm{\pi_\Omega\left(D-q(\tilde{Z})\right)}_F>\delta$.
From \eqref{lemeq:Z}, we have
\begin{align}
\proj{Z^*+S^*-D} = \frac{\rho}{\rho+\theta^*}~\proj{S^*+q(\tilde{Z})-D}.
\end{align}
Therefore,
\begin{align}
\norm{\proj{Z^*+S^*-D}}_F &=
\frac{\rho}{\rho+\theta^*}~\norm{\proj{S^*+q(\tilde{Z})-D}}_F,
\nonumber\\
%&=\frac{\rho}{\rho+\theta^*}~\norm{\pi_\Omega\left(\sgn\left(D-q(\tilde{Z})\right)\odot\max\left\{|D-q(\tilde{Z})|-\xi\frac{(\rho+\theta^*)}{\rho\theta^*}~E,\
%\mathbf{0}\right\}-\left(D-q(\tilde{Z})\right)\right)}_F, \nonumber\\
&=\frac{\rho}{\rho+\theta^*}
\left\|\pi_\Omega\left(\max\left\{|D-q(\tilde{Z})|
      -\xi\frac{(\rho+\theta^*)}{\rho\theta^*}
      E,\
      \mathbf{0}\right\}-|D-q(\tilde{Z})|\right)\right\|_F,\nonumber\\
&= \frac{\rho}{\rho+\theta^*}~\norm{\pi_\Omega\left(\min\left\{\xi\frac{(\rho+\theta^*)}{\rho\theta^*}~E,\ |D-q(\tilde{Z})|\right\}\right)}_F,\nonumber\\
&=\norm{\min\left\{\frac{\xi}{\theta^*}~E,\ \frac{\rho}{\rho+\theta^*}~\left|\pi_\Omega\left(D-q(\tilde{Z})\right)\right|\right\}}_F, \label{eq:Fnorm}
\end{align}
where the second equation is obtained after substituting
\eqref{lemeq:S} for $S^*$ and then componentwise dividing the
resulting expression inside the norm by
$\sgn\left(D-q(\tilde{Z})\right)$.  Define $\phi:\reals_+\rightarrow\reals$,
\begin{align}
\phi(\theta):= \norm{\min\left\{\frac{\xi}{\theta}~E,\ \frac{\rho}{\rho+\theta}~\left|\pi_\Omega\left(D-q(\tilde{Z})\right)\right|\right\}}_F.
\end{align}
It is easy to show that $\phi$ is a strictly decreasing function of
$\theta$. Since
$\phi(0)=\norm{\pi_\Omega\left(D-q(\tilde{Z})\right)}_F>\delta$ and
$\lim_{\theta\rightarrow\infty}\phi(\theta)=0$, there exists a unique
$\theta^*>0$ such that $\phi(\theta^*)=\delta$. Moreover, since $\theta^*>0$ and
$\phi(\theta^*)=\delta$, \eqref{eq:Fnorm} implies that $Z^*$, $S^*$
and $\theta^*$ satisfy the rest of KKT conditions \eqref{condition3}, \eqref{condition4} and \eqref{condition5} as well. Thus, the unique $\theta^*>0$ that satisfies $\phi(\theta^*)=\delta$ is the optimal Lagrangian dual.

We now show that $\theta^*$ can be computed in
$\cO(|\Omega|\log(|\Omega|))$ time. Let $A:=|\proj{D-q(\tilde{Z})}|$
and $0\leq a_{(1)}\leq a_{(2)}\leq ... \leq a_{(|\Omega|)}$ be the
$|\Omega|$ elements of the matrix $A$ corresponding to the indices
$(i,j)\in\Omega$ sorted in increasing order, which can be done in
$\cO(|\Omega|\log(|\Omega|))$ time. Defining $a_{(0)}:=0$ and
$a_{(|\Omega|+1)}:=\infty$, we then have for all
$j\in\{0,1,...,|\Omega|\}$ that
\begin{align}
\frac{\rho}{\rho+\theta}~a_{(j)} \leq \frac{\xi}{\theta} \leq
\frac{\rho}{\rho+\theta}~a_{(j+1)} \Leftrightarrow
\frac{1}{\xi}~a_{(j)}-\frac{1}{\rho} \leq \frac{1}{\theta} \leq
\frac{1}{\xi}~a_{(j+1)}-\frac{1}{\rho}.
\end{align}
Let $\bar{k}:=\max\left\{j: a_{(j)}\leq \frac{\xi}{\rho},\ 0\leq j\leq |\Omega| %\in\{0,1,\ldots,|\Omega|\}
   \right\}$, and for all $\bar{k}< j\leq |\Omega|$ define %$\theta_j$ such that
$\theta_j:=\frac{1}{\frac{1}{\xi}~a_{(j)}-\frac{1}{\rho}}$. Then for all $\bar{k}< j\leq
|\Omega|$, we have
\begin{align}
\phi(\theta_j)=\sqrt{\left(\frac{\rho}{\rho+\theta_j}\right)^2~\sum_{i=0}^j
  a^2_{(i)}+(|\Omega|-j)~\left(\frac{\xi}{\theta_j}\right)^2}.
\end{align}
Also define $\theta_{\bar{k}}:=\infty$ and $\theta_{|\Omega|+1}:=0$ so
that $\phi(\theta_{\bar{k}}):=0$ and
$\phi(\theta_{|\Omega|+1})=\phi(0)=\norm{A}_F>\delta$. Note that
$\{\theta_j\}_{\{\bar{k}< j\leq |\Omega|\}}$ contains all the points
at which $\phi(\theta)$ may not be differentiable for $\theta\geq 0$.
Define $j^*:=\max\{j:\ \phi(\theta_j)\leq\delta,\ \bar{k}\leq j\leq
|\Omega|\}$. Then $\theta^*$ is the unique solution of the system
\begin{align}
\label{eq:root}
\sqrt{\left(\frac{\rho}{\rho+\theta}\right)^2~\sum_{i=0}^{j^*} a^2_{(i)}+(|\Omega|-j^*)~\left(\frac{\xi}{\theta}\right)^2}=\delta \mbox{ and } \theta>0,
\end{align}
since $\phi(\theta)$ is continuous and strictly decreasing in $\theta$
for $\theta\geq 0$. Solving the equation in \eqref{eq:root} requires
finding the roots of a fourth-order polynomial (also known as a quartic
function). Lodovico Ferrari showed in 1540 that the roots of quartic functions can be
solved in closed form. Thus, it follows that  $\theta^*>0$ can be
computed in $\cO(1)$ operations.

Note that if $\bar{k}=|\Omega|$, then $\theta^*$ is the solution of the equation
\begin{align}
\sqrt{\left(\frac{\rho}{\rho+\theta^*}\right)^2~\sum_{i=1}^{|\Omega|} a^2_{(i)}}=\delta,
\end{align}
i.e. $\theta^*=
\rho\left(\frac{\norm{A}_F}{\delta}-1\right)=
\rho\left(\frac{\norm{\proj{D-q(\tilde{Z})}}_F}{\delta}-1\right)$.
\end{enumerate}
Hence, we have proved that problem~$(P_{ns})$ can be solved
efficiently when $\delta > 0$.

Now, suppose $\delta=0$. Since $\proj{Z^*+S^*-D}=0$, problem~$(P_{ns})$ %\eqref{eq:subproblem}
can be written as
\begin{equation}
\label{eq:subproblem_delta0}
\begin{array}{ll}
\min_{Z,S\in\reals^{m\times n}} & \xi\rho^{-1}
\norm{\pi_\Omega(S)}_1+\frac{1}{2}
\norm{\proj{D-S-q(\tilde{Z})}+\projc{Z-q(\tilde{Z})}}_F^2.
\end{array}
\end{equation}
Then \eqref{lemeq:LS_nonsmooth_delta0} and
$Z^*=\proj{D-S^*}+\projc{q(\tilde{Z})}$ trivially follow from first-order
optimality conditions for the above problem.
%\end{proof}
\subsection{Proof of Lemma~\ref{lem:chi_subgradient}}
\label{app:proof-2}
%\begin{replemma}{lem:chi_subgradient}
%Suppose $\delta>0$. Let $(Z^*,S^*)$ be an optimal solution to
%problem~$(P_{ns})$ in \eqref{eq:subproblem_nsa}
%and $\theta^*$ be an optimal Lagrangian multiplier such that
%$(Z^*,S^*)$ and $\theta^*$ together satisfy the KKT
%conditions. Then $(W^*,W^*)\in\partial \mathbf{1}_\chi(Z^*,S^*)$, where
%$W^*:=-Q+\rho(\tilde{Z}-Z^*)=\theta^*~\proj{Z^*+S^*-D}$.
%\end{replemma}
%\begin{proof}
Let $W^*:=-Q+\rho(\tilde{Z}-Z^*)$. Then \eqref{condition1},
\eqref{condition4} and \eqref{condition5}  % the KKT optimality conditions
%for problem \eqref{lemeq:subproblem}, which are stated
in the proof of Lemma~\ref{lem:subproblem} imply that
$W^*=\theta^*~\proj{Z^*+S^*-D}$. From the first-order optimality
conditions of $(P_{ns})$ in \eqref{eq:subproblem_nsa}, we have that
$(W^*,W)\in\partial \mathbf{1}_\chi(Z^*,S^*)$ for some
$W\in\partial\xi\norm{S^*}_1$. From \eqref{condition1} and
\eqref{condition2}, it follows that $W^*\in\partial\xi\norm{S^*}_1$. The
definition of $\chi$, chain rule on subdifferential (see Theorem 23.9 in
\cite{rockafellar1997convex}), and $W^*\in\partial\xi\norm{S^*}_1$
together imply that $(W^*,W^*)\in\partial \mathbf{1}_\chi(Z^*,S^*)$.
% and
%\begin{align}
%\norm{W^*}_F=\theta^*\norm{\proj{Z^*+S^*-D}}_F =
%\theta^*(\norm{\proj{Z^*+S^*-D}}_F-\delta)+\theta^*\delta=\theta^*\delta.
%\end{align}
%%where the last equality follows from complementary slackness,
%%i.e. $\theta^*(\norm{L^*+S^*-D}-\delta)=0$.
%Moreover, for all $(Z,S)\in\chi$, it follows from the definition of $\chi$
%that
%\eq
%\fprod{W^*,
%  \theta^*~\proj{Z+S-D}}\leq\theta^*\norm{W^*}_F
%\norm{\proj{Z+S-D}}_F\leq\theta^*\delta\norm{W^*}_F.
%\en
%Thus, for all $(Z,S)\in\chi$, we have
%$\fprod{W^*,W^*}=\norm{W^*}_F^2=\theta^*\delta\norm{W^*}_F\geq\fprod{W^*,
%  \theta^*~\proj{Z+S-D}}$.
%Hence, for all $(Z,S)\in\chi$,
%\begin{align}
%0\geq\fprod{W^*, \theta^*~\proj{Z+S-D}-W^*}=\fprod{W^*,
%  \theta^*~\proj{Z-Z^*+S-S^*}}.
%\label{eq:subgradient_key}
%\end{align}
%In the proof of Lemma~\ref{lem:subproblem} we have established that
%$\theta^*>0$
%when $\norm{\proj{D-q(\tilde{Z})}}_F>\delta$, where
%$q(\tilde{Z})=\tilde{Z}-\rho^{-1} Q$. Since
%$W^*=\theta^*~\proj{Z^*+S^*-D}$,  we have $\proj{W^*}=W^*$. Therefore, using the fact $\pi_\Omega^*=\pi_\Omega$, it follows that
%\eqref{eq:subgradient_key}
%implies that, for all $(Z,S)\in\chi$,
%\begin{align}
%0\geq\fprod{\proj{W^*}, Z-Z^*+S-S^*}=\fprod{W^*,
%  Z-Z^*+S-S^*}. \label{eq:subgradient_ineq}
%\end{align}
%On the other hand, when $\norm{\proj{D-q(\tilde{Z})}}_F\leq\delta$,
%$\theta^*=0$. Therefore,  $W^*=\theta^*~\proj{Z^*+S^*-D}=0$, and
%\eqref{eq:subgradient_ineq} follows trivially. %  Therefore,
%% \eqref{eq:subgradient_ineq} always holds and this shows that
%% $(W^*,W^*)\in\partial \mathbf{1}_\chi(Z^*,S^*)$.
%\end{proof}
\subsection{Proof of Lemma~\ref{lem:subgradients}}
\label{app:proof-3}
%\begin{replemma}{lem:subgradients}
%Let $f(\cdot):=\norm{\cdot}_*$, $g(\cdot):=\xi~\norm{\cdot}_1$ and let
%$\{L_k,Z_k,S_k,Y_k\}_{k\in\integers_+}$ denote the \admip~iterates corresponding to the sequence of penalty multipliers $\{\rho_k\}_{k\in\integers_+}$ and let $\{\hat{Y}_k\}_{k\in\integers_+}$ denote the sequence defined in \eqref{eq:yhat}. Then for
%all $k\geq 1$, $-Y_k\in\partial g(S_k)$ and
%$-\hat{Y}_k\in\partial
%f(L_k)$. Thus,
%$\{Y_k\}_{k\in\integers_+}$ and $\{\hat{Y}_k\}_{k\in\integers_+}$ are
%bounded sequences. Moreover, $\proj{Y_k}=Y_k$ for all $k\geq 1$.
%\end{replemma}
%\begin{proof}
%the $k$-th subproblem given in Step~\ref{eq:subproblem1} in
%Figure~\ref{fig:NSA} is $\min_L\norm{L}_*+\fprod{Y_k,
%L-Z_k}+\frac{\rho_k}{2}\norm{L-Z_k}_F^2$.
Since $L_{k+1}$ is the optimal solution to the subproblem in
Step~\ref{algeq:subproblem1} of \admip~corresponding to the $k$-th
iteration,  it follows that
\begin{align}
\label{eq:Lopt_cond}
0\in\partial\norm{L_{k+1}}_*+ Y_k+\rho_k(L_{k+1}-Z_k).
\end{align}
% For $k\geq 0$,
%the $k$-th subproblem given in Step~\ref{eq:subproblem2} in Figure~\ref{fig:NSA} is
%\begin{equation}
%\label{eq:subproblem_k}
%\begin{array}{ll}
%\min & \xi\norm{S}_1+\fprod{-Y_k,Z-L_{k+1}}+\frac{\rho_k}{2}\norm{Z-L_{k+1}}_F^2,\\
%s.t. & \frac{1}{2}~\norm{Z+S-D}^2_F\leq\frac{1}{2}~\delta^2:\ \ \theta_k.
%\end{array}
%\end{equation}
Let $\theta_k\geq 0$ denote the optimal Lagrange multiplier for the
quadratic
constraint in Step~\ref{algeq:subproblem2} sub-problem in the $k$-th
iteration.  Since $(Z_{k+1},S_{k+1})$ is the optimal solution, the
first-order optimality conditions imply that
\begin{align}
0\in\xi\partial\norm{S_{k+1}}_1+
\theta_k~\proj{Z_{k+1}+S_{k+1}-D}, \label{eq:Sopt_cond}\\
-Y_k+\rho_k(Z_{k+1}-L_{k+1})+\theta_k~\proj{Z_{k+1}+S_{k+1}-D}=0. \label{eq:Zopt_cond}
\end{align}
%For all $k\geq 0$, define $\hat{Y}_{k+1}:=Y_k+\rho_k(L_{k+1}-Z_k)$. Then
From \eqref{eq:Lopt_cond}, it follows that
$-\hat{Y}_{k+1}\in\partial\norm{L_{k+1}}_*$.  From \eqref{eq:Sopt_cond}
and \eqref{eq:Zopt_cond}, it follows that
$-Y_{k+1}\in\xi~\partial\norm{S_{k+1}}_1$. Since $\partial \norm{L}_*$ and
$\partial \norm{S}_1$ are uniformly bounded sets for all $L,
S\in\reals^{m\times n}$, it follows that $\{\hat{Y}_k\}_{k\in\integers_+}$
and $\{Y_k\}_{k\in\integers_+}$ are bounded sequences. Moreover,
\eqref{eq:Zopt_cond} implies that $\proj{Y_k}=Y_k$ for all $k\geq 1$.
%\end{proof}
\subsection{Proof of Lemma~\ref{lem:finite_sums}}
\label{app:proof-4}
%\begin{replemma}{lem:finite_sums}
%Suppose $\delta>0$. Let
%$\{L_k,Z_k,S_k,Y_k\}_{k\in\integers_+}$ denote the \admip~iterates
%corresponding to the non-decreasing sequence of penalty multipliers
%$\{\rho_k\}_{k\in\integers_+}$. Let
%$(L^*,L^*,S^*)=\argmin_{L,Z,S}\{\norm{L}_*+\xi~\norm{S}_1:\
%\frac{1}{2}\norm{\proj{Z+S-D}}^2_F\leq\frac{\delta^2}{2},\ L=Z\}$ denote
%any optimal solution;  $Y^*\in\reals^{m\times n}$ and $\theta^*\geq 0$
%denote any optimal Lagrangian duals corresponding to the constraints $L=Z$
%and  $\frac{1}{2}\norm{\proj{Z+S-D}}^2_F\leq\frac{\delta^2}{2}$,
%respectively. Then
%$\{\norm{Z_{k}-L^*}_F^2+\rho_{k}^{-2}\norm{Y_{k}-Y^*}_F^2\}_{k\in\integers_+}$
%is a non-increasing  sequence and
%\eq
%\begin{array}{ll}
%\sum_{k\in\integers_+}\norm{Z_{k+1}-Z_k}_F^2<\infty,
%&\sum_{k\in\integers_+}\rho_{k}^{-2}\norm{Y_{k+1}-Y_k}_F^2<\infty,\\
%\sum_{k\in\integers_+}\rho_k^{-1}\fprod{-Y_{k+1}+Y^*,
%  S_{k+1}-S^*}<\infty,
%&\sum_{k\in\integers_+}\rho_k^{-1}\fprod{-\hat{Y}_{k+1}+Y^*,
%  L_{k+1}-L^*}<\infty,\\
%\end{array}
%\en
%\eq
%\begin{array}{c}
%\sum_{k\in\integers_+}\rho_k^{-1}\fprod{Y^*-Y_{k+1},
%  L^*+S^*-Z_{k+1}-S_{k+1}}<\infty.
%\end{array}
%\en
%\end{replemma}
%\begin{proof}
For all $k \geq 0$, since $Y_{k+1}=Y_k+\rho_k(L_{k+1}-Z_{k+1})$ and
and $\hat{Y}_{k+1}:=Y_k+\rho_k(L_{k+1}-Z_k)$, we have that
$Y_{k+1}-\hat{Y}_{k+1}=\rho_k(Z_k-Z_{k+1})$. Using these relations, we obtain the following equality
\begin{eqnarray}
\lefteqn{\rho_k^{-1}\fprod{Y_{k+1}-Y_k, Y_{k+1}-Y^*}} \nonumber \\
%& = & \fprod{L_{k+1}-Z_{k+1}, Y_{k+1}-Y^*}, \nonumber \\
%& = & \fprod{L_{k+1}-L^*, Y_{k+1}-Y^*}+\fprod{L^*-Z_{k+1}, Y_{k+1}-Y^*}, \nonumber\\
%& = &\fprod{L_{k+1}-L^*, Y_{k+1}-\hat{Y}_{k+1}}+\fprod{L_{k+1}-L^*, \hat{Y}_{k+1}-Y^*}+\fprod{L^*-Z_{k+1}, Y_{k+1}-Y^*}, \nonumber \\
& = & \rho_k\fprod{L_{k+1}-L^*, Z_k-Z_{k+1}}+\fprod{L_{k+1}-L^*,
  \hat{Y}_{k+1}-Y^*}+\fprod{L^*-Z_{k+1}, Y_{k+1}-Y^*}. \label{eq:ykp-yk}
\end{eqnarray}
%Since $\norm{Z_k-L^*}_F^2=\norm{Z_{k+1}-L^*+Z_k-Z_{k+1}}_F^2$ and $\norm{Y_k-Y^*}_F^2=\norm{Y_{k+1}-Y^*+Y_k-Y_{k+1}}_F^2$, by rearranging the terms, we get
Moreover, we also have
\begin{eqnarray}
\lefteqn{\norm{Z_{k+1}-L^*}_F^2+\rho_{k}^{-2}\norm{Y_{k+1}-Y^*}_F^2} \nonumber\\
%& = & \Big(\norm{Z_{k}-L^*}_F^2 -\norm{Z_{k+1}-Z_k}_F^2+2\fprod{Z_{k+1}-L^*, Z_{k+1}-Z_k}\Big)\\
%&& \mbox{} + \rho_{k}^{-2}\Big(\norm{Y_{k}-Y^*}_F^2-\norm{Y_{k+1}-Y_k}_F^2+ 2 \fprod{Y_{k+1}-Y^*, Y_{k+1}-Y_k}\Big),\\
%\end{eqnarray*}
%Rearranging terms, we get
%\begin{eqnarray*}
%\lefteqn{\norm{Z_{k+1}-L^*}_F^2+\rho_{k}^{-2}\norm{Y_{k+1}-Y^*}_F^2}\\
& = &\norm{Z_{k}-L^*}_F^2+\rho_{k}^{-2}\norm{Y_{k}-Y^*}_F^2-\norm{Z_{k+1}-Z_k}_F^2-\rho_{k}^{-2}\norm{Y_{k+1}-Y_k}_F^2 \nonumber\\
&& \mbox{} + 2\fprod{Z_{k+1}-L^*, Z_{k+1}-Z_k} + 2 \rho_k^{-2}
\fprod{Y_{k+1}-Y_k, Y_{k+1}-Y^*}, \label{eq:lem32_quad_eq1}\\
%\end{eqnarray}
%\begin{eqnarray}
%\lefteqn{\norm{Z_{k+1}-L^*}_F^2+\rho_{k}^{-2}\norm{Y_{k+1}-Y^*}_F^2}\\
%& = & \norm{Z_{k}-L^*}_F^2+\rho_{k}^{-2}\norm{Y_{k}-Y^*}_F^2
%-\norm{Z_{k+1}-Z_k}_F^2-\rho_{k}^{-2}\norm{Y_{k+1}-Y_k}_F^2, \nonumber\\
%&& \mbox{} + 2\fprod{Z_{k+1}-L^*, Z_{k+1}-Z_k} + 2\fprod{L_{k+1}-L^\ast,Z_k-Z_{k+1}} \nonumber\\
%&&-2\rho_k^{-1}\left(\fprod{-\hat{Y}_{k+1}+Y^*,L_{k+1}-L^*}+\fprod{-Y_{k+1}+Y^*, L^*-Z_{k+1}}\right), \nonumber\\
%\end{eqnarray}
%Since $\fprod{Z_{k+1}-L^*, Z_{k+1}-Z_k} + \fprod{L_{k+1}-L^\ast, Z_k-Z_{k+1}}=\fprod{Z_{k+1}-L_{k+1}, Z_{k+1}-Z_k}$, we get
%\begin{eqnarray*}
%\lefteqn{\norm{Z_{k+1}-L^*}_F^2+\rho_{k}^{-2}\norm{Y_{k+1}-Y^*}_F^2}\\
& = &\norm{Z_{k}-L^*}_F^2 +
\rho_{k}^{-2}\norm{Y_{k}-Y^*}_F^2-\norm{Z_{k+1}-Z_k}_F^2
-\rho_{k}^{-2}\norm{Y_{k+1}-Y_k}_F^2,\nonumber\\
&& \mbox{} +2\fprod{Z_{k+1}-L_{k+1},
  Z_{k+1}-Z_k}-2\rho_k^{-1}\left(\fprod{-\hat{Y}_{k+1}+Y^*,
    L_{k+1}-L^*}+\fprod{-Y_{k+1}+Y^*, L^*-Z_{k+1}}\right), \nonumber\\
%\end{eqnarray}
%\begin{eqnarray}
%\lefteqn{\norm{Z_{k+1}-L^*}_F^2+\rho_{k}^{-2}\norm{Y_{k+1}-Y^*}_F^2} \nonumber\\
& =&\norm{Z_{k}-L^*}_F^2 +\rho_{k}^{-2}\norm{Y_{k}-Y^*}_F^2
-\norm{Z_{k+1}-Z_k}_F^2-\rho_{k}^{-2}\norm{Y_{k+1}-Y_k}_F^2, \nonumber\\
&&-2\rho_k^{-1}\left(\fprod{Y_{k+1}-Y_k, Z_{k+1}-Z_k} +
  \fprod{-\hat{Y}_{k+1}+Y^*, L_{k+1}-L^*}+\fprod{-Y_{k+1}+Y^*,
    L^*-Z_{k+1}}\right), \label{eq:preinduction_step}
\end{eqnarray}
where the second equality follows from rewriting the last term in \eqref{eq:lem32_quad_eq1} using \eqref{eq:ykp-yk}, and the last equality follows from the relation $L_{k+1}-Z_{k+1} = \rho_k^{-1}(Y_{k+1}-Y_k)$.

Since $Y^*$ and $\theta^*$ are optimal Lagrangian dual variables, we have
\begin{align*}
(L^*,L^*,S^*)=\argmin_{L,Z,S}\norm{L}_*+\xi~\norm{S}_1 +\fprod{Y^*,
  L-Z}+\frac{\theta^*}{2}\left( \norm{\proj{Z+S-D}}^2_F-\delta^2\right).
\end{align*}
From first-order optimality conditions, we get
\begin{eqnarray*}
0& \in & \partial\norm{L^*}_*+Y^*,\\
0& \in & \xi~\partial\norm{S^*}_1+\theta^*~\proj{L^*+S^*-D},\\
0 & = & -Y^*+\theta^*~\proj{L^*+S^*-D}.
\end{eqnarray*}
Hence, $-Y^*\in\partial\norm{L^*}_*$ and
$-Y^*\in\xi~\partial\norm{S^*}_1$. Moreover, from
Lemma~\ref{lem:subgradients}, we also have that $-Y_k\in\partial
\xi~\norm{S_k}_1$ for all $k\geq 1$. Since $\xi~\norm{.}_1$ is convex, it
follows that
\begin{align}
\fprod{-Y_{k+1}+Y_k, S_{k+1}-S_k}\geq 0, \label{eq:S_subgradient_monotone}\\
\fprod{-Y_{k+1}+Y^*, S_{k+1}-S^*}\geq 0. \label{eq:Sopt_subgradient_monotone}
\end{align}
Since $\rho_{k+1}\geq\rho_k$ for all $k\geq 1$, first adding
\eqref{eq:S_subgradient_monotone} to \eqref{eq:preinduction_step}, then
adding and subtracting \eqref{eq:Sopt_subgradient_monotone}, we get
\begin{eqnarray}
\lefteqn{\norm{Z_{k+1}-L^*}_F^2 +\rho_{k+1}^{-2}\norm{Y_{k+1}-Y^*}_F^2}
\nonumber\\
%& \leq &\norm{Z_{k+1}-L^*}_F^2 +\rho_{k}^{-2}\norm{Y_{k+1}-Y^*}_F^2,\nonumber\\
& \leq &\norm{Z_{k}-L^*}_F^2 +\rho_{k}^{-2}\norm{Y_{k}-Y^*}_F^2
-\norm{Z_{k+1}-Z_k}_F^2-\rho_{k}^{-2}\norm{Y_{k+1}-Y_k}_F^2
\nonumber\\
&& \mbox{} -2\rho_k^{-1}\left(\fprod{-\hat{Y}_{k+1}+Y^*,
    L_{k+1}-L^*}+\fprod{-Y_{k+1}+Y^*, S_{k+1}-S^*}\right) \nonumber\\
&& \mbox{} -2\rho_k^{-1}\left(\fprod{Y_{k+1}-Y_k,
    Z_{k+1}+S_{k+1}-Z_k-S_k}+\fprod{-Y_{k+1}+Y^*,
    L^*+S^*-Z_{k+1}-S_{k+1}}\right). \label{eq:induction_step}
\end{eqnarray}
Lemma~\ref{lem:chi_subgradient} applied to the
Step~\ref{algeq:subproblem2} sub-problem corresponding to the $k$-th
iteration gives $(Y_{k+1},Y_{k+1})\in\partial \mathbf{1}_{\chi}(Z_{k+1},S_{k+1})$.
Using an argument similar to  that  used in the proof of
Lemma~\ref{lem:chi_subgradient}, one can also show that $(Y^*,Y^*)\in\partial \mathbf{1}_{\chi}(L^*,S^*)$.
Moreover, since $-Y^*\in \partial
\xi~\norm{S^*}_1$, $-Y^*\in\partial\norm{L^*}_*$, and $-Y_{k}\in\partial \xi~\norm{S_k}_1$,
$-\hat{Y}_{k}\in\partial\norm{L_k}_*$ for all $k\geq 1$, we have that for all
$k \geq 0$,
\begin{align*}
\fprod{Y_{k+1}-Y_k, Z_{k+1}+S_{k+1}-Z_k-S_k}\geq 0,\\
\fprod{-Y_{k+1}+Y^*, L^*+S^*-Z_{k+1}-S_{k+1}}\geq 0,\\
\fprod{-Y_{k+1}+Y^*, S_{k+1}-S^*}\geq 0,\\
\fprod{-\hat{Y}_{k+1}+Y^*, L_{k+1}-L^*}\geq 0.
\end{align*}
This set of inequalities and \eqref{eq:induction_step} together imply that
$\{\norm{Z_{k}-L^*}_F^2+\rho_{k}^{-2}\norm{Y_{k}-Y^*}_F^2\}_{k\in\integers_+}$
is a non-increasing sequence. Using this fact,  rewriting
\eqref{eq:induction_step} and summing over $k\in\integers_+$, we get
\begin{align*}
&
\sum_{k\in\integers_+}\norm{Z_{k+1}-Z_k}_F^2+\rho_{k}^{-2}\norm{Y_{k+1}-Y_k}_F^2
\\
& \mbox{} +
2\sum_{k\in\integers_+}\rho_k^{-1}\left(\fprod{-\hat{Y}_{k+1}+Y^*,
    L_{k+1}-L^*}+\fprod{-Y_{k+1}+Y^*, S_{k+1}-S^*}\right)\\
&\mbox{} + 2\sum_{k\in\integers_+}\rho_k^{-1}\left(\fprod{Y_{k+1}-Y_k,
    Z_{k+1}+S_{k+1}-Z_k-S_k}+\fprod{-Y_{k+1}+Y^*,
    L^*+S^*-Z_{k+1}-S_{k+1}}\right)\\
\leq &
\sum_{k\in\integers_+}\left(\norm{Z_{k}-L^*}_F^2
  +\rho_{k}^{-2}\norm{Y_{k}-Y^*}_F^2-\norm{Z_{k+1}-L^*}_F^2
  -\rho_{k+1}^{-2}\norm{Y_{k+1}-Y^*}_F^2\right)<\infty.
\end{align*}
This inequality is sufficient to prove the rest of the lemma.
%\end{proof}
\bibliographystyle{siam}
\bibliography{All}
\end{document}